\newtheorem{theorem}{Theorem}[section]
\newtheorem{lemma}[theorem]{Lemma}
\newtheorem{proposition}[theorem]{Proposition}
\newtheorem{corollary}[theorem]{Corollary}
\newtheorem{conjecture}[theorem]{Conjecture}
\newtheorem{letterthm}{Theorem}
\newtheorem{lettercor}[letterthm]{Corollary}
\newtheorem{letterprop}[letterthm]{Proposition}
\theoremstyle{definition}
\newtheorem{definition}[theorem]{Definition}
\newtheorem{example}[theorem]{Example}
\newtheorem{remark}[theorem]{Remark}
\newcommand{\sect}{\ensuremath{\S}~}
\newcommand{\abs}[1]{\ensuremath{\lvert#1\rvert}}
\newcommand{\angles}[1]{\ensuremath{\langle#1\rangle}}
\newcommand{\field}[1]{\ensuremath{\mathbf{#1}}}
\newcommand{\Z}{\field{Z}}
\newcommand{\R}{\field{R}}
\newcommand{\Q}{\field{Q}}
\newcommand{\E}{\field{E}}
\newcommand{\st}{\; {\big\vert} \;}
\newcommand{\Pres}[2]{\big\langle#1\st#2\big\rangle}
\newcommand{\ab}[1]{\ensuremath{#1_{\text{ab}}}}
\newcommand{\lift}[1]{\ensuremath{\widetilde{#1}}}
\newcommand{\ZG}{\ensuremath{\Z\Gamma}}
\newcommand{\KXone}[1]{\ensuremath{\mathrm{K}(#1,1)}}
\newcommand{\FPnn}[1]{\ensuremath{\mathrm{FP}_{#1}}}
\newcommand{\FP}{\ensuremath{\mathrm{FP}}}
\DeclareMathOperator{\Map}{Map}
\DeclareMathOperator{\rk}{rk}
\DeclareMathOperator{\defic}{def}
\DeclareMathOperator{\adef}{adef}
\DeclareMathOperator{\CAT}{CAT}
\begin{document}
% <<< metadata
\title[Presentations of subgroups of right-angled Artin groups]{Constructing
presentations of subgroups of right-angled Artin groups}

\author{Martin R.~Bridson}
\address{Dept.\ of Mathematics, Imperial College, London SW7 2AZ.}
\email{m.bridson@imperial.ac.uk}
\thanks{}

\author{Michael Tweedale}
\address{Dept.\ of Mathematics, University of Bristol, Bristol BS8 1TW.}
\email{m.tweedale@bristol.ac.uk}
\thanks{This work was supported in part by grants from the EPSRC. The first
author is also supported  by a Royal Society Wolfson Research Merit Award.}
\subjclass[2000]{20F05 (primary), 57M07 (secondary)}

\date{5th September 2007}
% >>>
\begin{abstract}
  Let $G$ be the right-angled Artin group associated to the flag complex
  $\Sigma$ and let $\pi:G\to\Z$ be its canonical height function.  We
  investigate the presentation theory of the groups $\Gamma_n=\pi^{-1}(n\Z)$
  and construct an algorithm that, given $n$ and $\Sigma$, outputs a
  presentation of optimal deficiency on a minimal generating set, provided
  $\Sigma$ is triangle-free; the deficiency tends to infinity as $n\to\infty$
  if and only if the corresponding Bestvina--Brady kernel $\bigcap_n\Gamma_n$
  is not finitely presented, and the algorithm detects whether this is the
  case. We explain why there cannot exist an algorithm that constructs finite
  presentations with these properties in the absence of the triangle-free
  hypothesis.  We explore what \emph{is} possible in the general case,
  describing how to use the configuration of $2$-simplices in $\Sigma$ to
  simplify presentations and giving conditions on $\Sigma$ that ensure that
  the deficiency goes to infinity with $n$. We also prove, for general
  $\Sigma$, that the abelianized deficiency of $\Gamma_n$ tends to infinity if
  and only if $\Sigma$ is $1$-acyclic, and discuss connections with the
  relation gap problem.
\end{abstract}
\maketitle\let\languagename\relax\sloppy
%         ^ ^ ^ ^ ^ ^ ^ ^ ^ ^ ^ ^---- kludge: cf.
% http://www.latex-project.org/cgi-bin/ltxbugs2html?pr=amslatex/3756

\section{Introduction}\label{sec:intro}
\noindent Right-angled Artin groups, or graph groups as they used to be known,
have been the object of considerable study in recent years, and a good picture
of their properties has been built up through the work of many different
researchers. For example, from S.~Humphries~\cite{Humphries} one knows that
right-angled Artin groups are linear; their integral cohomology rings were
computed early on by K.~Kim and F.~Roush~\cite{KimRoush}, and 
C.~Jensen and J.~Meier~\cite{JensenMeier} have extended this to include
cohomology with group ring coefficients. More recently, S.~Papadima and
A.~Suciu~\cite{PapSuciu} have computed the lower central series, Chen groups
and resonance varieties of these groups, while R.~Charney, J.~Crisp and
K.~Vogtmann~\cite{CCV} have explored their automorphism groups (in the
triangle-free case) and M.~Bestvina, B.~Kleiner and M.~Sageev~\cite{BKS} their
rigidity properties.

However, the feature of these groups that has undoubtedly been the most
significant in fuelling interest in them is their rich geometry.
In~\cite{CharneyDavis}, R.~Charney and M.~Davis construct for each
right-angled Artin group an Eilenberg--Mac~Lane space which is a compact,
non-positively curved, piecewise-Euclidean cube complex. This invitation to
apply geometric methods to the study of right-angled Artin groups was taken up
with remarkable effect by M.~Bestvina and N.~Brady~\cite{BestvinaBrady}.

One can parametrize right-angled Artin groups by finite simplicial complexes
$\Sigma$ satisfying a certain \emph{flag} condition. The Artin group
associated to $\Sigma$ depends heavily on the combinatorial structure of
$\Sigma$, not just its topology. However, each right-angled Artin group has a
canonical map onto $\Z$, and if one passes to the kernel of this map then
Bestvina and Brady show that the cohomological finiteness properties of such a
kernel are determined by the topology of $\Sigma$ alone.
(See~\sect\ref{sec:backgroundBB} for a precise statement.) 

In low dimensions, the cohomological properties of a group are intimately
connected to its presentation theory, so one might hope to see directly how
presentations of these Bestvina--Brady kernels are related to the
corresponding flag complex $\Sigma$. This point of view was adopted by
W.~Dicks and I.~Leary in~\cite{DicksLeary}; we embrace and extend it here.

To prove their theorem, Bestvina and Brady use global geometric methods.  Our
aim is to understand the behaviour of subgroups of right-angled Artin groups
at a more primitive, algorithmic level.  Our main focus will be the
\emph{algorithmic} construction of finite presentations for certain
approximations to the Bestvina--Brady kernels. It turns out that there are
profound reasons why such an approach can only take one so far, and so
philosophically one can conclude that some extra input (for example, from
geometry) is essential for a complete understanding of these groups: see
\sect\ref{sec:limitations}.

Let us now describe our results. Fix a connected finite flag complex $\Sigma$.
The principal objects of study in this paper are finite-index subgroups of the
corresponding right-angled Artin group $G=G_{\Sigma}$ that interpolate between
the well-understood group $G$ and the often badly-behaved Bestvina--Brady
kernel $H=H_{\Sigma}$. Specifically, if $\pi:G\to\Z$ is the canonical
surjection (see~\sect\ref{sec:backgroundBB}), so that $H=\ker\pi$, then we
consider the groups $\Gamma_n=\pi^{-1}(n\Z)$: thus $G=H\rtimes\Z$ and
$\Gamma_n=H\rtimes n\Z$.

Our expectation that these groups should have interesting presentation
theories comes from the Bestvina--Brady theorem.  Recall that the finiteness
property \FPnn{2} has sometimes been called \emph{almost finite
presentability} in the literature, because a group $\Gamma$ enjoys this
property if and only if it has a presentation $F/R$ with $F$ a finitely
generated free group and the abelian group $R/[R,R]$ finitely generated as a
module over the group ring $\ZG$, where the $\Gamma$-action is induced by the
conjugation action of $F$ on $R$. This $\ZG$-module is called the
\emph{relation module} of the presentation. For a long time it was an open
question whether or not almost finite presentability is in fact equivalent to
finite presentability, but one part of Bestvina and Brady's result implies
that this question has a negative answer: specifically, when $\Sigma$ is a
flag complex with non-trivial perfect fundamental group, they show that the
kernel $H_{\Sigma}$ is almost finitely presented but not finitely presented.

Motivated by this, we investigate the extent to which the topology of $\Sigma$
determines whether or not the number of relations needed to present $\Gamma_n$
remains bounded as $n$ increases, and similarly whether the number of
generators needed for the relation modules remains bounded. A natural
conjecture is that the number of relations required remains bounded if and
only if $H_{\Sigma}$ is finitely presented, while the rank of the relation
module remains bounded if and only if $H_{\Sigma}$ is almost finitely
presented. We prove the second part of this conjecture in this paper.  In the
light of this, a proof of the first part (which eludes us)  would establish
the existence of a finitely presented group with a \emph{relation gap},
without giving an explicit example.  (See~\cite{BTtorsion} for a fuller
discussion of the relation gap problem.)

Here is a summary of our results.

\begin{letterprop}[Proposition~\ref{th:vertpres}]\label{lth:vertpres}
  If $\Sigma$ is connected then for each integer $n\ge1$ there is a
  generating set for $\Gamma_n$ indexed by the vertices of $\Sigma$, and
  $\Gamma_n$ cannot be generated by fewer elements.
\end{letterprop}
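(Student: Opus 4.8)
The plan is to prove the two assertions separately: a lower bound, that $\Gamma_n$ cannot be generated by fewer than $\abs{V(\Sigma)}$ elements, coming from the abelianisation; and an explicit generating set of size $\abs{V(\Sigma)}$ coming from a spanning tree of $\Sigma$.

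For the lower bound I would argue at the level of $G^{\mathrm{ab}}$. Writing $G=G_\Sigma$ with its standard presentation on the vertex set $V=V(\Sigma)$ (see~\sect\ref{sec:backgroundBB}), one has $G^{\mathrm{ab}}\cong\Z^V$, and $\pi\colon G\to\Z$ induces the augmentation $\Z^V\to\Z$, $(a_v)\mapsto\sum_v a_v$. Since $[G,G]$ is contained in $H=\ker\pi$ and hence in $\Gamma_n$, the quotient $\Gamma_n/[G,G]$ is precisely the preimage of $n\Z$ in $\Z^V$; this is a subgroup of index $n$ in $\Z^V$, so it is free abelian of rank $\abs V$. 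As $[\Gamma_n,\Gamma_n]\le[G,G]$, the group $\Gamma_n^{\mathrm{ab}}$ surjects onto this copy of $\Z^{\abs V}$, and therefore $\Gamma_n$ needs at least $\abs V$ generators.

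For the upper bound I would construct the generating set geometrically (this is Reidemeister--Schreier dressed up topologically). Let $S$ be the Salvetti complex of $G$, so that $\pi$ is realised by a map $S\to S^1$, and let $S_n\to S$ be the connected $n$-fold cyclic cover classified by $\pi\bmod n$, so $\pi_1 S_n=\Gamma_n$. The $1$-skeleton of $S_n$ has $n$ vertices $x_0,\dots,x_{n-1}$ and, for each $v\in V$ and each $k\in\Z/n$, an edge $a_{v,k}\colon x_k\to x_{k+1}$; each edge $\{u,v\}$ of $\Sigma$ contributes, for every $k$, a square with boundary $a_{u,k}a_{v,k+1}a_{u,k+1}^{-1}a_{v,k}^{-1}$. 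Fix a root $v_0\in V$ and use the spanning tree $\{a_{v_0,0},\dots,a_{v_0,n-2}\}$ of $S_n^{(1)}$; this presents $\Gamma_n$ on the remaining edges, namely the ``wrap-around'' generator $g_{v_0}=a_{v_0,n-1}$ together with all $a_{v,k}$ for $v\ne v_0$. Now choose a spanning tree $T$ of $\Sigma$ rooted at $v_0$ and process the vertices in an order compatible with $T$, each after its parent. For a vertex $v$ with parent $v'$, the squares attached to the edge $\{v,v'\}$ of $\Sigma$ give relations of the form $a_{v,k+1}=a_{v',k}^{-1}a_{v,k}a_{v',k+1}$, together with one wrap-around relation when $k$ passes $n-1$; these let one rewrite each $a_{v,k}$ with $k\ne0$ as a word in $g_{v_0}$, in the single surviving generator $g_v:=a_{v,0}$, and in the generators $g_{v'}$ already dealt with. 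Running this over all vertices shows $\Gamma_n=\langle g_v:v\in V\rangle$, a generating set of size $\abs V$ indexed by the vertices of $\Sigma$.

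The abelianisation computation and the verification that the chosen tree data really does present $\Gamma_n$ are routine. The delicate part is organising the elimination in the upper bound: one must run an induction simultaneously over the $T$-distance from $v_0$ and over $k\in\Z/n$, and keep careful track of the conjugating factors --- powers of $g_{v_0}$ from the wrap-around square, and powers of the already-eliminated $g_{v'}$, since over a non-root vertex the edges $a_{v',k}$ are only conjugate to, not equal to, $g_{v'}$. Connectedness of $\Sigma$ is used exactly once, to guarantee that $T$ exists so that every vertex is reachable from $v_0$.
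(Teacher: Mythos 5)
Your proposal is correct and follows essentially the same route as the paper: the lower bound comes from the rank of first homology (the paper invokes $\dim_{\Q}H_1(\Gamma_n;\Q)\ge\dim_{\Q}H_1(G;\Q)=N$ for the finite-index subgroup, which your explicit computation with $\Gamma_n/[G,G]$ reproves directly), and the upper bound comes from reading off generators of $\pi_1$ of the $n$-fold cyclic cover of the Salvetti complex using a spanning tree of $\Sigma$. Indeed, the generating set $\{g_v\}\cup\{g_{v_0}\}$ you produce is exactly the paper's alternative generating set $\{a_va_{v_0}^{-1}:v\ne v_0\}\cup\{a_{v_0}^n\}$, your covering-space elimination being the topological version of that observation (and of the paper's iterated mapping-torus construction).
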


The next two theorems are most cleanly phrased in the language of efficiency
and deficiency: see~\sect\ref{sec:backgrounddef}.

\begin{letterthm}[Theorems~\ref{th:graphdefic}
  and~\ref{th:explicitpres}]\label{lth:graph}
  Suppose that $\Sigma$ is triangle-free. Then for each choice of a maximal
  tree in $\Sigma$ and for each integer~$n$, there is an algorithm that
  produces an explicit presentation for $\Gamma_n$ with $N$ generators and
  $N-1+n(1-\chi(\Sigma))$ relations, where $N$ is the size of the vertex set
  of $\Sigma$. Moreover, these presentations are efficient.
\end{letterthm}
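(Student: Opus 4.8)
The plan is to build the presentation by Reidemeister--Schreier rewriting from the standard presentation of $G=G_\Sigma$, to reduce it to the advertised form by a carefully ordered batch of Tietze transformations dictated by the maximal tree, and finally to read off efficiency from the standard homological bound on deficiency together with the fact that a triangle-free flag complex is $1$-dimensional. Throughout I would write $V$ for the vertex set of $\Sigma$ (so $\abs V=N$) and $E$ for its edge set; then $G=\Pres{V}{[u,v]:uv\in E}$, the canonical map $\pi\colon G\to\Z$ sends every $v\in V$ to $1$, and $\Gamma_n=\ker(G\to\Z/n\Z)$ has index $n$ in $G$. I would fix the root $v_0$ of the chosen maximal tree $T$ and take $\{1,v_0,v_0^2,\dots,v_0^{n-1}\}$ as a Schreier transversal for $\Gamma_n$: it is closed under taking prefixes of words in $V$, and $\pi(v_0^j)\equiv j$ separates the $n$ cosets.

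First I would run Reidemeister--Schreier. This produces generators $x_{j,v}=v_0^{\,j}\,v\,\overline{v_0^{\,j}v}^{\,-1}$ for $0\le j\le n-1$ and $v\in V$, together with one relator for each pair (edge of $\Sigma$, transversal element). The generators $x_{j,v_0}$ with $j\le n-2$ are freely trivial and can be struck out at no cost, leaving $nN-n+1$ generators and $nE$ relators (a presentation of deficiency $1-n\bigl(1-\chi(\Sigma)\bigr)$, which the later moves will preserve). A direct calculation then shows that rewriting $v_0^{\,j}[u,v]v_0^{\,-j}$ yields the relator $x_{j,u}\,x_{j+1,v}\,x_{j+1,u}^{-1}\,x_{j,v}^{-1}$ (subscripts modulo $n$), so each such relator lets one eliminate a single generator in terms of three others.

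Now for the reduction. Rooting $T$ at $v_0$, I would process the levels $j=n-2,n-3,\dots,0$ in turn; within level $j$, eliminate the generators $x_{j,v}$ for $v\ne v_0$, taking the non-root vertices in an order in which each one precedes its $T$-parent $p(v)$, and eliminating $x_{j,v}$ by means of the relator attached to the tree edge $\{p(v),v\}$ at level $j$, which at that stage has the form $x_{j,p(v)}\cdot W\cdot x_{j,v}^{-1}$ for some word $W$ in generators of higher levels only. Each move is a genuine Tietze transformation removing one generator and one relator; the relators consumed are indexed bijectively by pairs (level $j\le n-2$, tree edge), so none is used twice; and after all $(n-1)(N-1)$ moves the surviving generators are precisely $\{x_{n-1,v}:v\in V\}$ --- an $N$-element generating set, minimal by Proposition~\ref{th:vertpres} --- while the surviving relators are the $N-1$ tree-edge relators at level $n-1$ together with the $n\bigl(E-(N-1)\bigr)$ relators at non-tree edges, i.e.\ $N-1+n\bigl(1-\chi(\Sigma)\bigr)$ of them. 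Recording the substitutions as one goes makes the presentation explicit, and the whole procedure is patently an algorithm with input $(\Sigma,T,n)$ (geometrically, it is a sequence of elementary collapses applied to the $n$-fold cover of the Charney--Davis cube complex of $G$). The step I expect to be the real obstacle is the bookkeeping for this elimination: one must check that the chosen ordering keeps each $x_{j,v}$ occurring exactly once, and isolably, in the relator one wishes to use, despite all the prior substitutions, and that the final tallies come out exactly as stated.

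For efficiency I would use that a triangle-free flag complex is $1$-dimensional, so the Charney--Davis cube complex $S_\Sigma$ is a compact $\KXone{G}$ of dimension $2$ (one vertex, $N$ edges, $E$ squares). Hence $\Gamma_n$, of index $n$ in $G$, has a compact two-dimensional $\KXone{\Gamma_n}$ --- the corresponding connected $n$-fold cover $\lift S$ --- with $\chi(\lift S)=n\,\chi(S_\Sigma)=n\bigl(1-\chi(\Sigma)\bigr)$. Being two-dimensional, $\lift S$ has $H_2(\Gamma_n;\Z)=H_2(\lift S;\Z)$ a subgroup of a free abelian group, hence itself free abelian, so its minimal number of generators equals its rank $b_2$; and since $\lift S$ is a connected $2$-complex, $\chi(\lift S)=1-b_1+b_2$ where $b_i$ are the rational Betti numbers of $\Gamma_n$. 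The standard homological bound (see~\sect\ref{sec:backgrounddef}) then gives
\[
  \defic(\Gamma_n)\le b_1(\Gamma_n)-d\bigl(H_2(\Gamma_n;\Z)\bigr)=b_1-b_2=1-\chi(\lift S)=1-n\bigl(1-\chi(\Sigma)\bigr),
\]
which is exactly the deficiency $N-\bigl(N-1+n(1-\chi(\Sigma))\bigr)$ of the presentation constructed above; so that presentation attains the deficiency of $\Gamma_n$ and is efficient. It is worth noting where triangle-freeness is essential here: it guarantees both that $S_\Sigma$ carries no cells above dimension $2$ (so that $\chi$ really computes $b_1-b_2$) and that $H_2(\Gamma_n;\Z)$ is torsion-free (so that its minimal number of generators is $b_2$ and not something larger) --- precisely the features that fail once $2$-simplices are present, as discussed in \sect\ref{sec:limitations}.
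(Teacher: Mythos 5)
Your proposal is correct, and it reaches the theorem by a genuinely different route from the paper. The paper never touches Reidemeister--Schreier: it builds a new non-positively curved cellulation of the $n$-fold cover of the Charney--Davis complex as an $N$-fold iterated mapping torus, ordering the vertices so that each lies in the star of a predecessor; each stage is an HNN extension over the free fundamental group of a ``necklace'' of rank $1+n(\abs{S_k}-1)$, and summing these ranks gives the relation count $N-1+n(1-\chi(\Sigma))$ directly, with the maximal tree used to choose coherent bases for the necklaces (\sect\ref{sec:explicitpres}). Efficiency there comes from asphericity of the presentation $2$-complex itself (iterated HNN along free subgroups, via Scott--Wall), whereas you get the same numerical coincidence from the $2$-dimensional covering space $\lift S$ without knowing anything about your presentation complex; both reduce to $d(H_2)-\rk(H_1)=b_2-b_1=\chi(\lift S)-1$, and your count matches (note only that your displayed inequality uses the generators-minus-relations sign convention, the opposite of the paper's $d_F(R)-d(F)$, though your argument is internally consistent). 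Your elimination scheme is sound: the relator attached to $(j,\{p(v),v\})$ contains $x_{j,v}$ exactly once, and an easy induction shows every substituted expression for a level-$j'$ generator involves only levels $\ge j'$, so no circularity arises and the tallies come out as you state. What the paper's construction buys that yours does not is the explicit HNN/mapping-torus structure of the resulting presentations, which is what the later sections exploit (simplification over $2$-simplices, and the formal passage $n\to\infty$ to presentations of $H_\Sigma$); what yours buys is a shorter, purely combinatorial derivation of the count and an efficiency proof that needs only the existence of a compact $2$-dimensional $\KXone{\Gamma_n}$.
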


\begin{lettercor}[Corollary~\ref{cor:graphdefic}]\label{lth:graphdefic}
  If $\Sigma$ is triangle-free, then $\defic(\Gamma_n)\to\infty$ as
  $n\to\infty$ if and only if $H_{\Sigma}=\bigcap_n\Gamma_n$ is not finitely
  presentable.
\end{lettercor}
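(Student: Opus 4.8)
The plan is to read the deficiency of $\Gamma_n$ straight off the presentations produced by Theorem~\ref{th:explicitpres} and to combine the result with the Bestvina--Brady criterion for finite presentability recalled in~\sect\ref{sec:backgroundBB}. The one combinatorial remark to record at the outset is that a triangle-free flag complex is $1$-dimensional --- the boundary of a $2$-simplex is a triangle, and every simplex of dimension $\ge2$ contains a $2$-face --- so a connected triangle-free $\Sigma$ is nothing but a connected graph. For such a $\Sigma$ the Euler characteristic is $\chi(\Sigma)=1-b_1(\Sigma)$, where $b_1(\Sigma)\ge0$ is the first Betti number, i.e.\ the rank of the free group $\pi_1(\Sigma)$; in particular $\chi(\Sigma)\le1$, with equality exactly when $\Sigma$ is a tree, equivalently when $\Sigma$ is simply connected.

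Next I would exploit efficiency. The presentations of Theorem~\ref{th:explicitpres} have $N$ generators and $N-1+n(1-\chi(\Sigma))=N-1+n\,b_1(\Sigma)$ relations, and because they are efficient they realise the deficiency of $\Gamma_n$; hence $\defic(\Gamma_n)=n\,b_1(\Sigma)-1$. Thus $n\mapsto\defic(\Gamma_n)$ is affine with slope $b_1(\Sigma)\ge0$: it is the constant $-1$ if $\Sigma$ is a tree and tends to $\infty$ as soon as $\Sigma$ contains a circuit. In other words, $\defic(\Gamma_n)\to\infty$ if and only if $\Sigma$ is not a tree.

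It then remains only to quote the Bestvina--Brady theorem: $H_\Sigma$ is finitely presentable precisely when $\Sigma$ is simply connected, which for a connected graph is the same as saying $\Sigma$ is a tree. Combining this with the previous paragraph, $\defic(\Gamma_n)\to\infty$ if and only if $\Sigma$ is not a tree if and only if $H_\Sigma$ is not finitely presentable, which is the assertion.

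There is no genuine obstacle here --- the corollary is a harvesting of Theorems~\ref{th:graphdefic} and~\ref{th:explicitpres} together with Bestvina--Brady --- but the step that must not be glossed over is the assertion in Theorem~\ref{th:explicitpres} that the presentations are \emph{efficient}: without it one merely gets an upper bound for $\defic(\Gamma_n)$, which settles the limit from one side only. For that side --- the implication ``$\defic(\Gamma_n)\to\infty\Rightarrow H_\Sigma$ not finitely presentable'' --- one can in any case argue elementarily, bypassing the deeper half of Bestvina--Brady: if $H_\Sigma$ admitted a finite presentation $\Pres{Y}{S}$, then, writing $\Gamma_n\cong H_\Sigma\rtimes\Z$ with $\Z$ acting through the $n$-th power of conjugation by a $t\in G$ satisfying $\pi(t)=1$, one would obtain a presentation of $\Gamma_n$ on $\abs{Y}+1$ generators with $\abs{S}+\abs{Y}$ relations, forcing $\defic(\Gamma_n)$ to stay bounded as $n\to\infty$.
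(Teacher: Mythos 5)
Your argument is correct and is essentially the paper's own proof: the presentations of Theorems~\ref{th:graphdefic} and~\ref{th:explicitpres} are efficient and hence realise $\defic(\Gamma_n)$, and the Bestvina--Brady theorem converts ``$\Sigma$ is not a tree'' into ``$H_{\Sigma}$ is not finitely presentable'' (your bypass of Bestvina--Brady for one direction is exactly Proposition~\ref{prop:goodkernel}(a) later in the paper). One small point in your favour: your value $\defic(\Gamma_n)=n\,b_1(\Sigma)-1$ is the one consistent with the paper's relations-minus-generators convention, whereas the formula $1+n(\chi(\Sigma)-1)$ printed in Corollary~\ref{cor:graphdefic} has the opposite sign and would contradict its own ``in particular'' clause.
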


The next theorem shows that there is a logical obstruction to extending
Corollary~\ref{lth:graphdefic} to the case when $\Sigma$ is
higher-dimensional, at least using constructive methods.

\begin{letterthm}[Theorem~\ref{th:noalg}]
  Suppose there is an algorithm that generates a finite presentation
  $\Pres{A(\Sigma)}{R(n,\Sigma)}$ of $\Gamma_n$ for each pair $(n,\Sigma)$
  with $n$ a positive integer and $\Sigma$ a finite flag complex. Suppose
  further that there is a partial algorithm that will correctly determine that
  $\sup_n\abs{R(n,\Sigma)}=\infty$ if $\Sigma$ belongs to a certain collection
  $\mathscr{C}$ of finite flag complexes. Then $\mathscr{C}$ does not coincide
  with the class of $\Sigma$ for which the Bestvina--Brady kernel $H_{\Sigma}$
  is not finitely presentable.
\end{letterthm}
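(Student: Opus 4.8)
The plan is to reduce the statement to the unsolvability of the triviality problem for finitely presented groups (the Adian--Rabin theorem), using the Bestvina--Brady criterion recalled in \sect\ref{sec:backgroundBB} to rephrase ``$H_\Sigma$ is not finitely presentable'' as ``$\pi_1(\Sigma)\ne1$''. So I would argue by contradiction, assuming that $\mathscr C$ \emph{is} the class of those connected finite flag complexes $\Sigma$ with $H_\Sigma$ not finitely presentable, equivalently (Bestvina--Brady) with $\pi_1(\Sigma)\ne1$.

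The first step is to read off from the hypotheses that $\mathscr C$ is recursively enumerable: on input $\Sigma$ one runs the presentation-producing algorithm to generate the words $R(n,\Sigma)$ and feeds them to the partial algorithm that detects $\sup_n\abs{R(n,\Sigma)}=\infty$, which by hypothesis halts precisely on the complexes in $\mathscr C$. (The presentation-producing algorithm plays no further part; the real content of the theorem is that the class of $\Sigma$ with $H_\Sigma$ not finitely presentable cannot be recursively enumerated.)

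The second step is to note that the complementary class $\{\Sigma:\pi_1(\Sigma)=1\}$ is \emph{also} recursively enumerable, since from a connected finite flag complex one computes a finite presentation of its fundamental group, and the set of finite presentations of the trivial group is recursively enumerable (enumerate the normal closure of the relators and wait until it contains every generator). A class of finite flag complexes and its complement being simultaneously recursively enumerable would make simple-connectivity a decidable property of such complexes. The third step derives the contradiction from Adian--Rabin: every finitely presented group $Q$ is the fundamental group of a connected finite flag complex computable from any presentation of $Q$ --- form the standard presentation $2$-complex, triangulate it, and pass to the barycentric subdivision, which is the order complex of a face poset and hence flag because a pairwise-comparable finite subset of a poset is automatically a chain --- so a decision procedure for simple-connectivity of finite flag complexes would decide triviality of arbitrary finitely presented groups.

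I expect the delicate points to be (i) the bookkeeping in the first step, namely pinning down the precise sense in which the hypothesised detector yields a recursive enumeration of $\mathscr C$ itself rather than of some larger class on which it happens to be correct, and (ii) the verification in the third step that the realisation of an arbitrary finitely presented group as a fundamental group can be kept inside the class of flag complexes; the barycentric-subdivision observation handles (ii), and everything else is an assembly of the Bestvina--Brady theorem, the Adian--Rabin theorem, and elementary facts about recursively enumerable sets.
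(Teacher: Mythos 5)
Your argument is correct and follows essentially the same route as the paper: both reduce to the non-existence of a partial algorithm recognizing non-triviality of a finitely presented group, by realizing an arbitrary finitely presented group as $\pi_1$ of a finite flag complex (barycentric subdivision of a triangulated presentation $2$-complex) and translating ``$H_\Sigma$ not finitely presentable'' into ``$\Sigma$ not simply connected'' via Bestvina--Brady. The only difference is that the paper cites the unrecognizability of non-triviality as a known fact (Rotman, Corollary~12.33), whereas you unpack its proof via the recursive enumerability of the set of presentations of the trivial group together with Adian--Rabin.
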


Despite this obstruction, we do discuss the general case in detail: we give a
procedure for building presentations for $\Gamma_n$ and then simplifying them
in the presence of $2$-simplices in $\Sigma$. The results we obtain are
technical to state, but we believe that this part of the paper is in some ways
the most illuminating for understanding why presentations of Artin subgroups
behave as they do.

Rather than cluttering this introduction with a technical result of this
nature, let us instead single out an application of our construction: in the
special case where $\Sigma$ is a topological surface, the presentations we
obtain behave as one expects. By a \emph{standard} flag triangulation, we mean
the disc triangulated as a single $2$-simplex; the sphere triangulated as the
join of a $0$-sphere and a simplicial circle; the projective plane
triangulated as the second barycentric subdivision of a hexagon with opposite
edges identified; or another compact surface with any flag triangulation.

\begin{letterprop}[Proposition~\ref{prop:surfaces}]
  Let $\Sigma$ be a standard flag triangulation of a compact surface and let
  $\Gamma_n\subset G_{\Sigma}$ be the corresponding Artin subgroups. Then the
  algorithm described in \sect\ref{sec:generalsimplify} produces a
  presentation of $\Gamma_n$ with $\abs{\Sigma^{(0)}}$~generators and $R(n)$
  relations, where $\lim_{n\to\infty}R(n)<\infty$ if and only if $\Sigma$ is
  homeomorphic to the disc or the sphere, i.e.\ if and only if $\Sigma$ is
  simply connected.
\end{letterprop}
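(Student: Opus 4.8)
The plan is to run the algorithm of \sect\ref{sec:generalsimplify} on the standard triangulations and to treat the two directions of the equivalence separately; only the ``bounded'' direction uses the internal workings of the algorithm, whereas the ``unbounded'' direction is a homological lower bound valid for any sequence of finite presentations of $\Gamma_n$ on a generating set of fixed size. Two facts about surfaces are used throughout. First, by the classification of compact surfaces, a compact surface is simply connected if and only if it is homeomorphic to the disc or the sphere. Secondly, every compact surface other than the disc and the sphere has $\widetilde H_1(\Sigma;\Z)\ne0$: a closed orientable surface of genus $g\ge1$ has $H_1=\Z^{2g}$; a closed non-orientable surface of genus $k\ge1$ has $H_1=\Z^{k-1}\oplus\Z/2$; and a compact surface with non-empty boundary other than the disc retracts onto a graph with non-trivial first homology. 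Hence, for compact surfaces, the conditions ``$1$-acyclic'', ``simply connected'' and ``homeomorphic to the disc or the sphere'' all coincide.

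Suppose first that $\Sigma$ is a standard triangulation of a compact surface other than the disc and the sphere. By the second fact $\Sigma$ is not $1$-acyclic, so the Bestvina--Brady theorem (\sect\ref{sec:backgroundBB}) shows that $H_\Sigma$ is not \FPnn2; hence, by the theorem of this paper that the rank of the relation module of $\Gamma_n$ is bounded if and only if $H_\Sigma$ is almost finitely presented, that rank tends to infinity with $n$. Since the number of relations in any finite presentation of a group is at least the rank of its relation module, every finite presentation of $\Gamma_n$ on $\abs{\Sigma^{(0)}}$ generators has an unbounded number of relations, and in particular $R(n)\to\infty$. This half of the argument uses nothing about the algorithm beyond the fact --- guaranteed by the construction, as in Proposition~\ref{th:vertpres} --- that it outputs a presentation on the $\abs{\Sigma^{(0)}}$ vertex generators.

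For the converse we must inspect the algorithm's output on the two remaining triangulations. For the disc this is immediate: $\Sigma$ is a single $2$-simplex, so $G_\Sigma=\Z^3$ with $\pi$ the sum map, whence $\Gamma_n=\pi^{-1}(n\Z)$ has index $n$ and is isomorphic to $\Z^3$; when fed a single simplex, the construction returns a presentation of $\Z^3$ on the three vertex generators, so $R(n)$ is bounded (indeed constant) in $n$. The substantive case is the sphere $\Sigma=S^0*C$, where $S^0=\{a,b\}$ and $C$ is a simplicial $k$-cycle $v_1,\dots,v_k$. The presentation built by the algorithm before its simplification step has the $k+2$ vertex generators and a number of relations growing linearly in $n$; the point is that $\Sigma$ carries the $2k$ triangles $\{a,v_i,v_{i+1}\}$ and $\{b,v_i,v_{i+1}\}$, and the simplification step of \sect\ref{sec:generalsimplify} uses these triangles to cancel, level by level, exactly the relations responsible for that growth. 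The cancellation is complete because $H_1(\Sigma;\Z)=0$, so every $1$-cycle of the $1$-skeleton bounds an integral simplicial $2$-chain and the $2$-simplices suffice to ``fill'' every level; after simplification no $n$-dependent relation survives, and $\Gamma_n$ is left with a presentation on $k+2$ generators whose number of relations is bounded independently of $n$. (That such a bound must exist is what the Bestvina--Brady theorem predicts: $S^2$ is simply connected, so $H_\Sigma$ is finitely presented.)

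The main obstacle is this last step in the sphere case. One must pin down the shape of the unsimplified presentation and then verify that the simplification of \sect\ref{sec:generalsimplify} removes the \emph{entire} linear-in-$n$ contribution rather than merely its leading part --- concretely, that the assignment recording which edge relation at which level is eliminated by each $2$-simplex is simultaneously consistent across all $n$ levels; the role of $H_2(\Sigma;\Z)=\Z$, which controls the redundancies among the $2k$ triangle relations, enters at this point. None of this is visible from the statement of the algorithm's output, so one is forced to argue inside the construction of \sect\ref{sec:generalsimplify}. Granting it, the two directions combine to give $\lim_{n\to\infty}R(n)<\infty$ exactly when $\Sigma$ is homeomorphic to the disc or the sphere, which for a compact surface is the same as being simply connected.
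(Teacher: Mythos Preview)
Your treatment of the unbounded direction is valid but differs from the paper's. You invoke Proposition~\ref{prop:mv} to conclude that \emph{every} presentation of $\Gamma_n$ on $\abs{\Sigma^{(0)}}$ generators has unboundedly many relations, which a fortiori covers the algorithm's output and dispatches $\R P^2$ together with the surfaces of non-positive Euler characteristic in one stroke. The paper instead argues internally to the procedure: for surfaces with $\chi\le0$ the count at the end of \sect\ref{sec:generalsimplify} already leaves at least $1-\chi(\Sigma)\ge1$ size-$n$ families, while for $\R P^2$ (where $\chi=1$ and the count is inconclusive) it runs the elimination procedure by hand on the standard triangulation and exhibits the single surviving family. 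Your route is more uniform but forward-references the Mayer--Vietoris argument of \sect\ref{sec:diagproof}; the paper's route is elementary and stays inside the algorithmic framework the proposition is meant to illustrate.

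Your sphere case, however, has a genuine gap---one you flag yourself with ``Granting it, \ldots''. You assert that the simplification step cancels all the $n$-dependent relations ``because $H_1(\Sigma;\Z)=0$'', but nothing in Proposition~\ref{prop:procedure} says that acyclicity alone forces the procedure to terminate with no families remaining; indeed Theorem~\ref{th:noalg} is precisely the statement that no such implication can be made algorithmic in general, so a homological heuristic cannot substitute for an actual run of the procedure. What you are missing is the concrete structural observation the paper uses: the standard sphere triangulation $S^0*C$ is literally the equatorial cycle $C$ with a cone over it at each pole, so Proposition~\ref{prop:coneoff} applies directly and removes the relevant size-$n$ family. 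That is a verification \emph{inside} the algorithm, and it is what turns the sphere case from a plausibility argument into a proof.
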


Finally, in the case when $\Sigma$ is $2$-dimensional, which is particularly
important for the possible application to the relation gap problem discussed
above, we obtain the following general picture:

\begin{letterthm}[Propositions~\ref{prop:goodkernel},~\ref{prop:mv}
  and~\ref{prop:eulchardef}]\label{lth:posres}
  If $\Sigma$ is a finite flag $2$-complex then the following implications
  hold:
  \[\label{eq:bigdiag}
  \xymatrix{
  H_{\Sigma}\text{ is f.\ p.}\ar@{=>}[r]\ar@{<=>}[d] &
    \defic(\Gamma_n)=O(1)\ar@{=>}[d]\\
  \Sigma\text{ is }1\text{-connected}\ar@{=>}[r]\ar@{=>}[d] & \chi(\Sigma)\ge1\\
  \Sigma\text{ is }1\text{-acyclic}\ar@{=>}[ur]\ar@{<=>}[d]\\
  H_{\Sigma}\text{ is }\FPnn{2}\ar@{=>}[r] & \adef(\Gamma_n)=O(1)\ar@{=>}[ul]
  }
  \]
\end{letterthm}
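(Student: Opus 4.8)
The plan is to establish the diagram one edge at a time, grouped by the machinery each edge needs. I would first dispatch the formal and quotable edges. The two vertical biconditionals---$H$ finitely presentable $\Leftrightarrow\Sigma$ is $1$-connected, and $H$ is $\FPnn{2}\Leftrightarrow\Sigma$ is $1$-acyclic---are the Bestvina--Brady theorem of \sect\ref{sec:backgroundBB}, so I would cite them. The edge $1$-connected $\Rightarrow1$-acyclic is Hurewicz, and for the edges into $\chi(\Sigma)\ge1$ it is enough to observe that a connected flag $2$-complex with $H_1(\Sigma;\Z)=0$ satisfies $\chi(\Sigma)=1-b_1+b_2=1+b_2\ge1$; this covers both $1$-acyclic $\Rightarrow\chi(\Sigma)\ge1$ and, through the previous edge, $1$-connected $\Rightarrow\chi(\Sigma)\ge1$.

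Next come the two edges out of the left column into the right column, which I would obtain from the semidirect decomposition $\Gamma_n=H\rtimes n\Z$ (this is Proposition~\ref{prop:goodkernel}). Writing $s=t^n$ and letting $\phi$ be conjugation by $t$, a fixed finite presentation $\langle x_1,\dots,x_m\mid r_1,\dots,r_k\rangle$ of $H$ gives the presentation $\langle x_1,\dots,x_m,s\mid r_1,\dots,r_k,\ sx_is^{-1}=\phi^n(x_i)\rangle$ of $\Gamma_n$. The number of relations is $k+m$, \emph{independent of} $n$---only the lengths of the words $\phi^n(x_i)$ grow---so $\defic(\Gamma_n)\ge(m+1)-(k+m)=1-k$ for every $n$. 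Since any presentation gives $\defic(\Gamma_n)\le b_1(\Gamma_n;\Q)-b_2(\Gamma_n;\Q)\le N$, the deficiency is trapped in a bounded interval, proving $H$ f.p.\ $\Rightarrow\defic(\Gamma_n)=O(1)$. Feeding a partial resolution witnessing $\FPnn{2}$ through the same construction yields a presentation whose relation module is generated over $\Z\Gamma_n$ by a number of elements bounded independently of $n$, and with the companion bound $\adef(\Gamma_n)\le N$ this gives $H$ is $\FPnn{2}\Rightarrow\adef(\Gamma_n)=O(1)$.

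The downward edge $\defic(\Gamma_n)=O(1)\Rightarrow\chi(\Sigma)\ge1$ runs in the detecting direction and is Proposition~\ref{prop:eulchardef}. Here I would use that $K(\Gamma_n,1)$ is the $n$-fold cyclic cover of the Salvetti complex $K(G,1)$, whose Euler characteristic is $1-\chi(\Sigma)$, so $\chi(\Gamma_n)=n(1-\chi(\Sigma))$. As $\Sigma$ is $2$-dimensional, $\Gamma_n$ has cohomological dimension at most $3$, whence $\chi(\Gamma_n)=1-b_1+b_2-b_3$ and $\defic(\Gamma_n)\le b_1-b_2=1-b_3-n(1-\chi(\Sigma))\le1-n(1-\chi(\Sigma))$. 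If $\chi(\Sigma)\le0$ the right-hand side tends to $-\infty$, so $\defic(\Gamma_n)$ is unbounded; the contrapositive is the stated edge. As a check, in the triangle-free case $b_3=0$ and this recovers the exact deficiency $1-n(1-\chi(\Sigma))$ of Theorem~\ref{lth:graph}.

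The remaining edge $\adef(\Gamma_n)=O(1)\Rightarrow\Sigma$ is $1$-acyclic is the crux, and I expect the module computation it requires (Proposition~\ref{prop:mv}) to be the main obstacle. Its contrapositive demands that failure of $1$-acyclicity force $\adef(\Gamma_n)\to-\infty$, and the difficulty is that this failure can be purely torsion: the projective plane has $\chi=1$ and is rationally acyclic, so the rational estimate above says nothing and the argument must see torsion. My plan is to choose a field $\mathbb{F}$ with $\tilde H_1(\Sigma;\mathbb{F})\neq0$---namely $\mathbb{F}=\Q$ when $b_1(\Sigma)>0$ and $\mathbb{F}=\mathbb{F}_p$ when $p$ divides the order of the torsion subgroup of $H_1(\Sigma;\Z)$---and to exploit the bound $\adef(\Gamma_n)\le\dim_{\mathbb{F}}H_1(\Gamma_n;\mathbb{F})-\dim_{\mathbb{F}}H_2(\Gamma_n;\mathbb{F})$, which holds over \emph{every} field by way of the exact sequence $0\to H_2(\Gamma;\mathbb{F})\to(R/[R,R])\otimes_{\Z\Gamma}\mathbb{F}\to\mathbb{F}^X\to H_1(\Gamma;\mathbb{F})\to0$ attached to any presentation. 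It then suffices to prove $\dim_{\mathbb{F}}H_2(\Gamma_n;\mathbb{F})-\dim_{\mathbb{F}}H_1(\Gamma_n;\mathbb{F})\to\infty$. For this I would decompose $K(\Gamma_n,1)$ along its $n$ level sets to obtain the Wang/Mayer--Vietoris sequence $\cdots\to H_i(H;\mathbb{F})\xrightarrow{t^n-1}H_i(H;\mathbb{F})\to H_i(\Gamma_n;\mathbb{F})\to H_{i-1}(H;\mathbb{F})\to\cdots$, and combine it with the Bestvina--Brady/Morse description of the $\mathbb{F}[t^{\pm1}]$-module $H_*(H;\mathbb{F})$ in terms of $\tilde H_*(\Sigma;\mathbb{F})$. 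The outcome to establish is that $\tilde H_1(\Sigma;\mathbb{F})\neq0$ makes $H_2(H;\mathbb{F})$ large enough over $\mathbb{F}[t^{\pm1}]$ that $\dim_{\mathbb{F}}\operatorname{coker}(t^n-1)$ grows linearly in $n$, outstripping the degree-$1$ contribution; carrying out this module computation over $\mathbb{F}_p$, so that it registers the torsion of $H_1(\Sigma;\Z)$ that is invisible to $\Q$, is the delicate step on which the whole edge turns.
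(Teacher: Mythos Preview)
Your treatment of the formal edges and of Proposition~\ref{prop:goodkernel} is correct and matches the paper essentially word for word (note, though, that the paper's sign convention is $\defic=\text{relations}-\text{generators}$, the opposite of yours; since the statements are all of the form ``$O(1)$'' this does not affect the logic, but your inequalities are reversed relative to the paper throughout). Your argument for Proposition~\ref{prop:eulchardef} is likewise the paper's, phrased via Betti numbers rather than via attaching cells.

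The one substantive divergence is in the hard edge, Proposition~\ref{prop:mv}. The paper does \emph{not} work over a field or invoke the Wang sequence. Instead it works directly with the infinite cyclic cover $S$ of the Salvetti complex: it takes finite vertical strips $S_n=f^{-1}[-n,n]$, glues the ends to form a model $T_n$ for $K(\Gamma_{2n},1)$, and runs an ordinary Mayer--Vietoris for the decomposition $T_n=A_n\cup B_n$ into a long strip and a short collar. The growth input is that $d(H_2(S_n))\to\infty$, which the paper obtains by citing Leary--Saadetoglu \cite[Corollary~7]{LearySaad}: $H_2(H_\Sigma)$ surjects onto an infinite direct sum $\bigoplus_{\Z}H_1(\Sigma)$, with the deck transformation acting by shift. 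Your Wang-sequence route is a perfectly valid reorganisation of the same idea (the Wang sequence \emph{is} the Mayer--Vietoris sequence of the mapping-torus decomposition), and your bound $\adef(\Gamma_n)\ge\dim_{\mathbb F}H_2(\Gamma_n;\mathbb F)-\dim_{\mathbb F}H_1(\Gamma_n;\mathbb F)$ over an arbitrary field is a nice sharpening of Lemma~\ref{lem:defdef}.

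However, you stop exactly at the point where the content lies: you describe the needed growth of $\dim_{\mathbb F}\operatorname{coker}(t^n-1\mid H_2(H;\mathbb F))$ as ``the delicate step on which the whole edge turns'' and leave it unproved. The Leary--Saadetoglu surjection is precisely what closes this gap: it gives a $\Z[t^{\pm1}]$-equivariant surjection $H_2(H)\twoheadrightarrow\Z[t^{\pm1}]\otimes H_1(\Sigma)$, hence over your field $\mathbb F$ a surjection onto $\mathbb F[t^{\pm1}]\otimes \widetilde H_1(\Sigma;\mathbb F)$, on which $\operatorname{coker}(t^n-1)$ has dimension $n\cdot\dim_{\mathbb F}\widetilde H_1(\Sigma;\mathbb F)$. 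That is the linear growth you were after, and it handles the torsion case (e.g.\ $\Sigma=\R P^2$, $\mathbb F=\mathbb F_2$) just as well as the rational one. Without citing or reproving this result your argument for Proposition~\ref{prop:mv} is incomplete; with it, your approach and the paper's are two packagings of the same computation.
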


Here is an outline of the paper. In \sect\ref{sec:backgroundBB} we review the
work of Bestvina and Brady on right-angled Artin groups and establish some
notation. In \sect\ref{sec:backgrounddef} we assemble the definitions of
the properties of group presentations that we will consider. 

In \sect\ref{sec:computedefic} we describe an algorithm to construct efficient
presentations of the groups $\Gamma_n$ associated to a triangle-free flag
complex $\Sigma$, and calculate their deficiencies. In
\sect\ref{sec:explicitpres} we implement this algorithm, and write down
explicit presentations for $\Gamma_n$.

The next three sections are concerned with extending the methods and results
of the triangle-free case to a general flag complex. In
\sect\ref{sec:limitations}, we state our main conjecture about the
deficiencies of presentations for general~$\Sigma$, and prove that there is a
recursion-theoretic obstruction to finding an algorithm to construct
presentations verifying this conjecture. In \sect\ref{sec:simplify}, we
present local arguments that let one use the topology of~$\Sigma$ to simplify
presentations of the groups~$\Gamma_n$. This leads to a procedure that
produces presentations of the $\Gamma_n$ by first building large presentations
and then using these local arguments to simplify the presentations by removing
size~$n$ families of relations. We present evidence that the resulting
presentations will be close to realizing the deficiencies of the $\Gamma_n$
and look briefly at presentations for the Bestvina--Brady kernel $H$.
Finally, in \sect\ref{sec:apps} we give one theoretical and one practical
application of the procedure developed in \sect\ref{sec:simplify}: we show how
simplifying the topology of $\Sigma$ by coning off a loop in its $1$-skeleton
leads to a simplification of our presentations of the $\Gamma_n$, and also
work through our procedure in the case when $\Sigma$ is a flag triangulation
of the real projective plane.

We conclude in \sect\ref{sec:diagproof} by using non-constructive methods to
prove results about the deficiencies and abelianized deficiencies of the
$\Gamma_n$ for a general $\Sigma$.

\subsection*{Acknowledgments} Much of the work in this paper formed part of
the second author's Ph.~D. thesis at Imperial College, London, and the
exposition has benefited from the careful reading and useful suggestions of
the two examiners, Ian Leary and Bill Harvey.

\section{The Bestvina--Brady theorem}\label{sec:backgroundBB}
\subsection{Right-angled Artin groups}
Let $\Sigma$ be a finite simplicial complex with vertices $a_1,\ldots,a_N$. We
shall assume that $\Sigma$ is a \emph{flag complex}, i.e.\ that every set of
pairwise adjacent vertices of $\Sigma$ spans a simplex. (We can always arrange
this without altering the topology of $\Sigma$ by barycentrically
subdividing.) We then associate to $\Sigma$ a \emph{right-angled Artin group}
\[
G_{\Sigma}=\left\langle e_1,\ldots,e_N\st [e_i,e_j]\text{ for
}\{a_i,a_j\}\in\Sigma\right\rangle.
\]
\begin{example}
  When $\Sigma$ is a discrete set of $N$ points, $G_{\Sigma}$ is a free group
  of rank~$N$. At the other extreme, when $\Sigma$ is an $(N-1)$-simplex (so
  that the $1$-skeleton of $\Sigma$ is the complete graph on $N$ vertices),
  the corresponding right-angled Artin group $G_{\Sigma}$ is free abelian of
  rank~$N$. If $\Sigma$ is a graph, the flag condition reduces to saying that
  $\Sigma$ is triangle-free, i.e.\ has no cycles of length~$3$.
\end{example}
\begin{remark}
  Of course, $\Sigma$ is determined by its $1$-skeleton, but it is convenient
  to carry along the higher-dimensional cells that make $\Sigma$ into a flag
  complex.
\end{remark}

\subsection{An Eilenberg--Mac~Lane space for $G_{\Sigma}$}\label{sec:EilMac}
Let $e_1,\ldots,e_N$ be an orthonormal basis of Euclidean $N$-space $\E^N$. If
$\sigma=\{a_{i_1},\ldots,a_{i_n}\}$ is a simplex in $\Sigma$, let
$\Box_{\sigma}$ be the regular $n$-cube with vertices at the origin and at
$\sum_{j\in J}e_{i_j}$ for all non-empty $J\subset\{1,\ldots,n\}$.  We define
$K=K_{\Sigma}$ to be the image in $T^N=\E^N/\Z^N$ of
\[
\bigcup\{\Box_{\sigma}\st\sigma\text{ is a simplex in }\Sigma\}.
\]
It is clear that $\pi_1(K_{\Sigma})=G_{\Sigma}$.

\begin{proposition}[Charney--Davis~\cite{CharneyDavis}]\label{prop:KG_RAAG}
  $K$ inherits a locally $\CAT(0)$ metric from the Euclidean metrics on the
  $\Box_{\sigma}$. In particular, $K$ is an Eilenberg--Mac~Lane space for
  $G_{\Sigma}$.
\end{proposition}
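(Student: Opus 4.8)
The plan is to exhibit $K$ as a compact piecewise-Euclidean cube complex and then apply Gromov's link condition. First I would record the cube-complex structure: $K$ has a single vertex $v$, the common image of the lattice points $\Z^N$, and for each simplex $\sigma=\{a_{i_1},\dots,a_{i_n}\}$ of $\Sigma$ the Euclidean $n$-cube $\Box_\sigma$ descends to an $n$-cube of $K$, attached to the lower cubes $\Box_{\tau}$ ($\tau\subset\sigma$) along its faces --- with the two faces of $\Box_\sigma$ orthogonal to $e_{i_j}$ both mapping onto $\Box_{\sigma\setminus\{a_{i_j}\}}$, so that $K$ is a cube complex in the usual (self-gluing-permitting) sense. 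The Euclidean metrics on the $\Box_\sigma$ agree on common faces and so endow $K$ with a piecewise-Euclidean structure; since $\Sigma$ is finite there are only finitely many isometry types of cells, whence $K$ is a complete geodesic metric space.

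By Gromov's criterion, such a complex is locally $\CAT(0)$ if and only if the link of each vertex is a flag simplicial complex, so the crux is to compute $\mathrm{lk}(v,K)$. Each of the $2^n$ corners of a cube $\Box_\sigma$ contributes to $\mathrm{lk}(v,K)$ an all-right spherical $(n-1)$-simplex, spanned by the tangent directions $\varepsilon_j e_{i_j}$ obtained by choosing a sign $\varepsilon_j\in\{+,-\}$ at each vertex $a_{i_j}\in\sigma$. Assembling these, I claim that $\mathrm{lk}(v,K)$ is the ``octahedralisation'' $O\Sigma$ of $\Sigma$: its vertex set is $\{a_1^{+},a_1^{-},\dots,a_N^{+},a_N^{-}\}$, the vertices $a_i^{+},a_i^{-}$ being antipodal and never joined by an edge, and a set of its vertices spans a simplex precisely when it contains at most one member of each pair $\{a_i^{+},a_i^{-}\}$ and the corresponding vertices of $\Sigma$ span a simplex of $\Sigma$.

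The key step is to verify that $O\Sigma$ is flag, and here the flag hypothesis on $\Sigma$ is used in an essential way (for instance $O(\partial\Delta^2)$ fails to be flag). Let $S$ be a set of pairwise-adjacent vertices of $O\Sigma$. Adjacency forces $S$ to contain at most one vertex from each antipodal pair and to map to a set $\bar S$ of pairwise-adjacent vertices of $\Sigma$; since $\Sigma$ is flag, $\bar S$ spans a simplex of $\Sigma$, and therefore $S$ spans a simplex of $O\Sigma$. Hence $\mathrm{lk}(v,K)$ is flag, and $K$ is locally $\CAT(0)$.

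It remains to deduce the Eilenberg--Mac~Lane statement. A van Kampen computation on the $2$-skeleton --- one $0$-cell, one $1$-cell $e_i$ for each vertex $a_i$, and one square $2$-cell imposing $[e_i,e_j]$ for each edge $\{a_i,a_j\}$ of $\Sigma$ --- gives $\pi_1(K)\cong G_\Sigma$. Being complete, simply connected and locally $\CAT(0)$, the universal cover $\widetilde K$ is $\CAT(0)$ by the Cartan--Hadamard theorem, hence contractible, so $K$ is aspherical; thus $K$ is a $\KXone{G_\Sigma}$. The only genuinely non-formal ingredient is the link computation together with the observation that $O\Sigma$ inherits the flag property from $\Sigma$; the remainder is the standard machinery of non-positively curved cube complexes.
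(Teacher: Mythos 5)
Your proof is correct, and it is essentially the standard argument: the paper itself offers no proof, simply citing Charney--Davis, and their proof is exactly your computation that the vertex link is the octahedralisation $O\Sigma$, which inherits the flag property from $\Sigma$, followed by Gromov's link condition, Cartan--Hadamard, and a van Kampen computation of $\pi_1$. The one point worth being explicit about (and which you handle correctly) is that the flag hypothesis on $\Sigma$ is genuinely needed for $O\Sigma$ to be flag, as your example $O(\partial\Delta^2)$ shows.
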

\begin{corollary}\label{cor:eulchar}
  $\chi(G_{\Sigma})+\chi(\Sigma)=1$.
  \begin{proof}
    Apart from its single vertex, the cells in $K$ correspond exactly to the
    simplices in $\Sigma$, with a shift in dimension by $1$.
  \end{proof}
\end{corollary}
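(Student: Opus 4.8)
The plan is to compute $\chi(G_{\Sigma})$ by exploiting the explicit finite model $K=K_{\Sigma}$ supplied by Proposition~\ref{prop:KG_RAAG}. That proposition tells us $K$ is an Eilenberg--Mac~Lane space for $G_{\Sigma}$, and it is manifestly a finite complex, so $G_{\Sigma}$ has a finite $\KXone{G_{\Sigma}}$, the Euler characteristic $\chi(G_{\Sigma})$ is defined, and $\chi(G_{\Sigma})=\chi(K)$. Everything then reduces to reading off the cell structure of $K$.

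The one substantive step is to pin down that cell structure. By construction $K\subset T^N$ is the union of the cubes $\Box_{\sigma}$, one for each simplex $\sigma$ of $\Sigma$, with $\dim\Box_{\sigma}=\abs{\sigma}=\dim\sigma+1$. I would check that this presents $K$ as a cube complex whose cells are exactly the images of the $\Box_{\sigma}$ ($\sigma\in\Sigma$) together with the single $0$-cell (the image of $\Z^N$): a proper face of the $n$-cube $\Box_{\sigma}$ is a translate, by a sum of basis vectors, of some $\Box_{\tau}$ with $\tau\subset\sigma$ (or of the origin), and since such translations are trivial in $T^N=\E^N/\Z^N$, these faces are identified with the corresponding $\Box_{\tau}$ and with the base vertex; meanwhile the open cells of distinct simplices are disjoint in $T^N$. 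Thus $\sigma\mapsto\Box_{\sigma}$ is a bijection from the simplices of $\Sigma$ to the positive-dimensional cells of $K$, shifting dimension by $1$ --- the content of the author's remark. This is the point at which one leans on the Charney--Davis construction rather than a bare calculation; beyond it there is only bookkeeping, so I would not expect a genuine obstacle here.

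With the cell structure in hand the computation is immediate. Writing $f_d(\Sigma)$ for the number of $d$-simplices of $\Sigma$, the complex $K$ has one $0$-cell and $f_d(\Sigma)$ cells of dimension $d+1$ for each $d\ge0$, so
\[
\chi(G_{\Sigma})=\chi(K)=1+\sum_{d\ge0}(-1)^{d+1}f_d(\Sigma)=1-\sum_{d\ge0}(-1)^{d}f_d(\Sigma)=1-\chi(\Sigma),
\]
and rearranging gives $\chi(G_{\Sigma})+\chi(\Sigma)=1$. As a sanity check on the signs one can test the extreme cases from the example above: for $\Sigma$ a set of $N$ points, $G_{\Sigma}=F_N$ and the identity reads $(1-N)+N=1$; for $\Sigma$ an $(N-1)$-simplex, $G_{\Sigma}=\Z^N$ and it reads $0+1=1$.
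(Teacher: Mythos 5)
Your proposal is correct and is essentially the paper's own argument: the paper's entire proof is the observation that the cells of $K$ other than its single vertex correspond bijectively to the simplices of $\Sigma$ with a dimension shift of $1$, and you fill in exactly that observation (plus the routine identifications in $T^N$ and the alternating-sum bookkeeping, which the paper leaves implicit).
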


\subsection{Bestvina--Brady kernels}\label{sec:BBthm}
For each $\Sigma$, there is a surjective homomorphism $\pi:G_{\Sigma}\to\Z$
sending each generator $e_i$ to a fixed generator of $\Z$.
 There are deep links between the topology
of $\Sigma$ and the finiteness properties of the kernel of this map, as the
following theorem reveals.

\begin{theorem}[Bestvina--Brady~\cite{BestvinaBrady}]\label{th:BB}
  Let $\Sigma$ be a finite flag complex and $H$ the kernel of the map
  $G_{\Sigma}\to\Z$.
  \begin{enumerate}
  \item\label{BBfp} $H$ is finitely presented if and only if $\Sigma$ is
    simply connected.
  \item\label{BBFPn} $H$ is of type \FPnn{n+1} if and only if $\Sigma$ is
    $n$-acyclic.
  \item\label{BBFP} $H$ if of type \FP\ if and only if $\Sigma$ is acyclic.
  \end{enumerate}
\end{theorem}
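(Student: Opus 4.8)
The plan is to run Bestvina--Brady discrete Morse theory on the universal cover of the cube complex $K=K_{\Sigma}$ of \sect\ref{sec:EilMac}. Write $X=\widetilde{K}$; by Proposition~\ref{prop:KG_RAAG} this is a contractible, locally $\CAT(0)$ cube complex on which $G=G_{\Sigma}$ acts freely and cocompactly, its vertex set being identified with $G$. The map $T^{N}\to\R/\Z$ summing the coordinates restricts to a cellular map $K\to S^{1}$ that induces $\pi$ on $\pi_{1}$, and this lifts to a $\pi$-equivariant map $\tilde f\colon X\to\R$ which is affine and non-constant on every cube and sends the vertex $g$ to $\pi(g)\in\Z$. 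Thus $\tilde f$ is a Morse function in the sense of Bestvina--Brady whose set of critical values is exactly $\Z$, and $H=\ker\pi$ acts freely and cocompactly on each ``slab'' $X_{t}$, the largest subcomplex of $X$ contained in $\tilde f^{-1}([-t,t])$.

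First I would identify the ascending and descending links. The link of a vertex $g$ in $X$ is a flag complex having two vertices $e_{i}^{+},e_{i}^{-}$ for each vertex $a_{i}$ of $\Sigma$ --- the directions into the edges $g\to ge_{i}$ and $g\to ge_{i}^{-1}$ --- in which a set of these spans a simplex precisely when the corresponding $a_{i}$ span a simplex of $\Sigma$ and no index appears with both signs. The ascending link $\mathrm{Lk}^{\uparrow}(g)$ is the full subcomplex on the $e_{i}^{+}$, the descending link $\mathrm{Lk}^{\downarrow}(g)$ the full subcomplex on the $e_{i}^{-}$; by inspection each is isomorphic to $\Sigma$. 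So every ascending and every descending link in $X$ is a copy of $\Sigma$.

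Next I would feed this into the Bestvina--Brady Morse Lemma: as $t$ increases past an integer, $X_{t}$ changes, up to $H$-equivariant homotopy equivalence, by coning off copies of $\Sigma$, namely the descending links of the vertices newly admitted at the top together with the ascending links of those newly admitted at the bottom. Since $\Sigma$ being $(n-1)$-connected makes the pair $(\mathrm{cone}\,\Sigma,\Sigma)$ $n$-connected, and since $X=\bigcup_{t}X_{t}$ is contractible, it follows that every $X_{t}$ is $(n-1)$-connected if and only if $\Sigma$ is, and likewise $(n-1)$-acyclic if and only if $\Sigma$ is. The filtration $\{X_{t}\}$ of the contractible complex $X$ by $H$-cocompact subcomplexes on which $H$ acts freely now makes Brown's finiteness criterion applicable, and it gives: $H$ is of type $\FPnn{n+1}$ if and only if $\Sigma$ is $n$-acyclic; $H$ is of type $\F_{n+1}$ --- equivalently, of type $\FPnn{n+1}$ and finitely presented, once $n\ge1$ --- if and only if $\Sigma$ is $n$-connected; and, letting $n\to\infty$, $H$ is of type $\FP$ if and only if $\Sigma$ is acyclic. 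The case $n=1$ is precisely the assertion that $H$ is finitely presented if and only if $\Sigma$ is simply connected.

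The main obstacle is the Morse Lemma itself. One must show that pushing the parameter across a critical value alters the homotopy type of the slab --- which is non-compact, though $H$-cocompact --- by exactly these elementary coning operations, performed simultaneously and $H$-equivariantly over the infinitely many vertices at each integer height. This requires building explicit Morse-theoretic deformation retractions cube by cube and checking that they assemble equivariantly, with some care over the fact that sublevel sets are not honest subcomplexes until one subdivides or retracts. By comparison, the identification of the ascending and descending links with $\Sigma$ and the bookkeeping with Brown's criterion are straightforward.
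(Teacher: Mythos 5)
The paper does not actually prove this theorem: it is imported verbatim from Bestvina and Brady's paper, so there is no internal proof to compare your attempt against. Judged on its own terms, your outline correctly reproduces the easier half of their argument: the Morse function on the universal cover $X$ of $K_\Sigma$, the identification of every ascending and descending link with $\Sigma$, the Morse Lemma describing passage across a critical level as coning off copies of these links, and Brown's finiteness criterion. That machinery does deliver all of the ``if'' directions: when $\Sigma$ is $n$-acyclic (resp.\ $n$-connected) the relative groups $H_k(X_{t'},X_t)\cong\bigoplus\tilde H_{k-1}(\Sigma)$ vanish for $k\le n+1$, so every slab is $n$-acyclic (resp.\ $n$-connected), and $H$ acts freely and cocompactly on it, giving type \FPnn{n+1} (resp.\ type $\F_{n+1}$, hence finite presentability when $n=1$).

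The genuine gap is in the ``only if'' directions, which you dispose of with ``it follows that every $X_t$ is $(n-1)$-connected if and only if $\Sigma$ is.'' That does not follow. The Morse Lemma tells you that new homology (or new elements of $\pi_1$) is born each time the slab crosses a critical level, but it says nothing about whether those classes survive into larger slabs; they certainly all die in the limit, since $X$ is contractible, so the whole question is whether they die at some finite stage. Moreover Brown's criterion concerns the directed system being \emph{essentially} $(n-1)$-acyclic, not the individual $X_t$ being $(n-1)$-acyclic, so even exhibiting one bad slab would prove nothing. Bestvina and Brady close this with a separate geometric argument: near each vertex of $X$ there is an embedded copy of (a subdivision of) $\Sigma$, a ``sheet,'' which is a retract of every slab containing it; a nontrivial class in $\tilde H_k(\Sigma)$ (or a nontrivial loop, in the delicate case where $\pi_1(\Sigma)$ is perfect and nontrivial) therefore yields classes in $\tilde H_k(X_t)$, one for each of the infinitely many extremal vertices, mapping injectively into $\tilde H_k(X_{t'})$ for all $t'\ge t$. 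This persistence-via-retraction argument is the real content of the theorem and is entirely absent from your sketch, whereas the Morse Lemma itself, which you single out as the main obstacle, is the comparatively routine part.
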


\section{Presentation invariants of groups}\label{sec:backgrounddef}
\noindent In this section, we assemble some basic definitions for later use.
The reader will find a more detailed account of these properties
in~\cite{BTtorsion}.

We shall write $d(\Gamma)$ for the minimum number of elements needed to
generate a group $\Gamma$. If $Q$ is a group acting on $\Gamma$ then we write
$d_Q(\Gamma)$ for the minimum number of $Q$-orbits needed to generate
$\Gamma$.

\subsection{Deficiency and abelianized deficiency}
Let $\Gamma$ be a finitely presented group. The \emph{deficiency} of a finite
presentation $F/R$ of $\Gamma$ is $d_F(R)-d(F)$, where $F$ operates on its
normal subgroup $R$ by conjugation. (Some authors' definition of deficiency
differs from ours by a sign.)

The action of $F$ on $R$ induces by passage to the quotient an action of
$\Gamma$ on the abelianization $\ab{R}$ of $R$, which makes $\ab{R}$ into a
$\ZG$-module, called the \emph{relation module} of the presentation. The
\emph{abelianized deficiency} of the presentation is
$d_{\Gamma}(\ab{R})-d(F)$.

\begin{lemma}[{\cite[Lemma~2]{BTtorsion}}]\label{lem:defdef}
  The deficiency of any finite presentation of $\Gamma$ is bounded below by
  the abelianized deficiency, and this in turn is bounded below by
  $d(H_2(\Gamma))-\rk(H_1(\Gamma))$, where $\rk$ is torsion-free rank.
\end{lemma}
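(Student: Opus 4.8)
The plan is to establish the two inequalities separately: the first by a direct comparison of generating sets, the second via the five-term exact sequence in group homology.

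For the first inequality, fix a finite presentation $F/R$ of $\Gamma$ with $F$ free of rank $d(F)$. Conjugation by elements of $R$ acts trivially on $\ab{R}$, so the conjugation action of $F$ on $R$ descends to an action of $\Gamma=F/R$ on $\ab{R}$ — this is precisely the relation-module structure. If $r_1,\dots,r_k$ normally generate $R$ in $F$ with $k=d_F(R)$, then $R$ is generated as a group by the $F$-conjugates of the $r_i$, so $\ab{R}$ is generated as a $\ZG$-module by the images of $r_1,\dots,r_k$. Hence $d_\Gamma(\ab{R})\le d_F(R)$, and subtracting $d(F)$ shows that the deficiency of $F/R$ is at least its abelianized deficiency.

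For the second inequality I would keep the same presentation and feed the extension $1\to R\to F\to\Gamma\to1$ into the five-term exact sequence of low-degree terms in integral homology. Since $F$ is free, $H_2(F)=0$, and writing $(\ab{R})_\Gamma=\ab{R}\otimes_{\ZG}\Z=R/[R,F]$ for the $\Gamma$-coinvariants of the relation module, the sequence becomes
\[
0\to H_2(\Gamma)\to (\ab{R})_\Gamma\to \Z^{d(F)}\to H_1(\Gamma)\to 0,
\]
with $\Z^{d(F)}=H_1(F)=\ab{F}$. Let $K$ be the image of $(\ab{R})_\Gamma$ in $\Z^{d(F)}$; it is the kernel of $\Z^{d(F)}\to H_1(\Gamma)$, hence a subgroup of a finitely generated free abelian group with $\Z^{d(F)}/K\cong H_1(\Gamma)$, so $K$ is free abelian of rank $d(F)-\rk(H_1(\Gamma))$. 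Because $K$ is projective, the exact sequence $0\to H_2(\Gamma)\to(\ab{R})_\Gamma\to K\to0$ splits, giving $(\ab{R})_\Gamma\cong H_2(\Gamma)\oplus\Z^{\,d(F)-\rk(H_1(\Gamma))}$.

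To conclude I would invoke the elementary fact that $d(A)=\max_p\dim_{\mathbf{F}_p}(A\otimes\mathbf{F}_p)$ for a finitely generated abelian group $A$, which gives $d(A\oplus\Z^s)=d(A)+s$; applied to the displayed decomposition this yields $d\bigl((\ab{R})_\Gamma\bigr)=d(H_2(\Gamma))+d(F)-\rk(H_1(\Gamma))$. Finally, any $\ZG$-module generating set for $\ab{R}$ maps onto an abelian-group generating set for $(\ab{R})_\Gamma$, so $d_\Gamma(\ab{R})\ge d\bigl((\ab{R})_\Gamma\bigr)$; subtracting $d(F)$ gives that the abelianized deficiency is at least $d(H_2(\Gamma))-\rk(H_1(\Gamma))$, as required. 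The one place where care is needed is this last stretch: one is forced to pass to $\Gamma$-coinvariants — because $d_\Gamma(\ab{R})$ is a count over $\ZG$ whereas $d(H_2(\Gamma))$ and $\rk(H_1(\Gamma))$ are abelian-group invariants — and then one must track the torsion of $H_2(\Gamma)$ carefully through the splitting and the generator count; the rest is routine diagram-chasing.
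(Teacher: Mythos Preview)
The paper does not actually prove this lemma: it is quoted verbatim from \cite[Lemma~2]{BTtorsion} with no argument given. So there is no in-paper proof to compare against.

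That said, your argument is correct and is the standard one. The first inequality is immediate, as you say. For the second, the five-term sequence (equivalently Hopf's formula $H_2(\Gamma)\cong (R\cap[F,F])/[R,F]$) gives exactly the exact sequence you write down; the key observation that $(\ab{R})_\Gamma=R/[R,F]$ is finitely generated as an abelian group (being generated by the images of any finite normal generating set for $R$) justifies applying the structure theorem. Your splitting argument and the identity $d(A\oplus\Z^s)=d(A)+s$ via $\max_p\dim_{\mathbf{F}_p}(A\otimes\mathbf{F}_p)$ are both fine, and the passage from $d_\Gamma(\ab{R})$ to $d\bigl((\ab{R})_\Gamma\bigr)$ is exactly right. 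This is almost certainly the argument in \cite{BTtorsion} as well.
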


\begin{definition}
  The \emph{deficiency} $\defic(\Gamma)$ (resp.\ \emph{abelianized deficiency}
  $\adef(\Gamma)$) of $\Gamma$ is the infimum of the deficiencies (resp.\
  abelianized deficiencies) of the finite presentations of $\Gamma$.
\end{definition}

\subsection{Efficiency}
Obviously, if $\Gamma$ has a presentation of deficiency
$d(H_2(\Gamma))-\rk(H_1(\Gamma))$ then by Lemma~\ref{lem:defdef} this
presentation realizes the deficiency of the group. In this case, $\Gamma$ is
said to be \emph{efficient}. One knows that inefficient groups exist: R.~Swan
constructed finite examples in~\cite{Swan}, and much later
M.~Lustig~\cite{LustigEff} produced the first torsion-free examples.

\section{Computing the deficiency when $\Sigma$ is triangle-free}%
\label{sec:computedefic}
\noindent Fix a finite connected flag complex $\Sigma$ and let $G$ be the
associated right-angled Artin group, $H$ the kernel of the exponent-sum map
$G\to\Z$, and $\Gamma_n$ the kernel of the corresponding map $G\to\Z/n$. In
this section we shall give a method for building a cellulation of the cover
of the standard Eilenberg--Mac~Lane space for $G$ (see \sect\ref{sec:EilMac})
corresponding to the subgroup $\Gamma_n$. The cellulation is constructed by
repeatedly forming mapping tori, so this exhibits the group $\Gamma_n$ as an
iterated HNN~extension. We use this construction to prove
Proposition~\ref{lth:vertpres}; when $\Sigma$ is $1$-dimensional, counting the
cells in the resulting complex gives Theorem~\ref{lth:graph}.

\subsection{A motivating example} 
Before embarking on the construction we have just described, we believe it
will help the reader if we discuss a simple example that explains our
motivation for proceeding as we do.

Consider, then, the case when $\Sigma$ is a $2$-simplex, so that
\begin{align*}
  G_{\Sigma}&=\Pres{a_1,a_2,a_3}{[a_1,a_2],[a_2,a_3],[a_1,a_3]}\\
  &\cong\Z^3.  
\end{align*}
The map $\pi:G\to\Z$ is given by $a_i\mapsto1$, and $H=\ker\pi$ is isomorphic
to $\Z^2$, which has deficiency $-1$. On the other hand, the finite-index
subgroups $\Gamma_n=\pi^{-1}(n\Z)$ are all isomorphic to $\Z^3$ and have
deficiency $0$.

How can one get presentations realizing the deficiency of $\Gamma_n$? The
standard Eilenberg--Mac~Lane space $K$ in this case is just the $3$-torus
$T^3$ with its usual cubical structure. If we form the $n$-sheeted cover
$\hat{K}$ of $K$ corresponding to the subgroup $\Gamma_n$ of $G$ and contract
a maximal tree, we can use van~Kampen's theorem to read off a presentation of
$\Gamma_n\cong\Z^3$ with $2n+1$ generators and $3n$ relations---far from
realizing the zero deficiency of $\Gamma_n$.

However, if we are prepared to use a different cellulation of $\hat{K}$ then
we can obtain an efficient presentation in the following way. Consider first
the preimage $T(a_1,a_2)\subset \hat{K}$ of the $2$-torus in $K$ spanned by
$a_1$ and $a_2$. This is a single $2$-torus, cellulated as shown in
Figure~\ref{fig:torus}.
\begin{figure}[tb]
  \centering
  \includegraphics{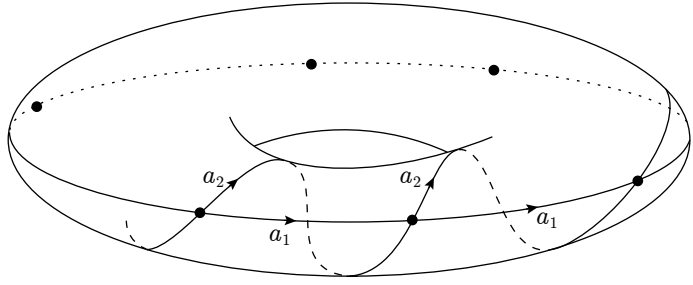}
  \caption{$T(a_1,a_2)$: the preimage in $\hat{K}$ of the torus in $K$
  spanned by $a_1$ and $a_2$.}
  \label{fig:torus}
\end{figure}
Note that this is exactly the mapping torus $\Map(B,\sigma)$ of $B=S^1$
(cellulated with $n$ vertices and $n$ edges labelled $a_1$) by the map
$\sigma$ which is rotation by $2\pi/n$.

Let $\lambda$ be the loop in $B$ spelling out $a_1^n$; this is a generator for
$\pi_1(B)$. Van~Kampen's theorem then gives us a presentation for the
fundamental group of the mapping torus $\Map(B,\sigma)$, namely
$\Pres{\lambda,t}{[\lambda,t]}$, where the stable letter $t$ represents the
path $a_1a_2^{-1}$ in $T(a_1,a_2)$.

Now observe that $\hat{K}$ can be obtained (up to isometry---we are only
changing the cell structure) from $\Map(B,\sigma)$ by again forming a mapping
torus, this time by the shift map $\sigma'$ that rotates each coordinate
circle in $T(a_1,a_2)$ by $2\pi/n$. Again we can compute $\pi_1(\hat{K})$
using van~Kampen's theorem, and we find that
\[
\pi_1(\hat{K})=\Pres{\lambda,t,t'}{[\lambda,t],[\lambda,t'],[t,t']},
\]
where the new stable letter $t'$ is the loop $a_1a_3^{-1}$. This presentation
is  efficient.

\begin{remark}
  To get an efficient presentation, we needed to work with a different
  $\CAT(0)$ cellulation to the obvious lifted cubical structure. In fact, the
  $2$-skeleton of a cube complex $L$ of dimension~$\ge3$ will almost never
  give an efficient presentation for its fundamental group: unless the
  boundary identifications turn it into a torus, any $3$-cube provides an
  obvious essential $2$-sphere in $L^{(2)}$ showing that one of the relations
  is redundant.
\end{remark}

\subsection{The main construction}\label{sec:mainconstruct}
We now turn to the general case when $\Sigma$ is an arbitrary connected finite
flag complex. Let $N$ be the number of vertices in $\Sigma$, and fix an
integer $n\ge1$. We are going to construct an Eilenberg--Mac~Lane space
$Y^n_N$ for the group $\Gamma_n\subset G_{\Sigma}$ as an $N$-fold iterated
mapping torus.

Order the vertices of $\Sigma$, $a_1<a_2<\cdots<a_N$, in such a way
that $a_j$ is contained in the union of the closed stars of the $a_i$ with
$i<j$. Let $Y^n_1$ be a circle, with cell structure consisting of $n$
vertices labelled $0,\ldots,n-1$, and $n$ edges, each labelled $a_1$. We
write $\alpha_1$ for the generator $a_1^n$ of $\pi_1(Y^n_1)$, and take
$\mathscr{P}^n_1=\Pres{\alpha_1}{~}$ as a presentation for $\pi_1(Y^n_1)$.

Suppose inductively that $Y^n_{k-1}$ has been defined and that for $1\le i\le
k-1$, we have a circle $\alpha_i$ in $Y^n_{k-1}$ cellulated as the vertices
$0,\ldots,n-1$ joined in cyclic order by edges labelled $a_i$.  Let
\begin{equation*}
  S_k=\{v\st v<v_k\text{ and }\{v,v_k\}\text{ is an edge in }\Sigma\}
\end{equation*}
and let $B\subset Y_{k-1}^n$ be the union of the loops $\alpha_i$
corresponding to vertices $v_i$ in $S_k$ (Figure~\ref{fig:necklace}), together
with any higher-dimensional cells whose intersection with the $1$-skeleton of
$Y_{k-1}^n$ is contained in this union.
\begin{figure}[tb]
  \centering
  \includegraphics{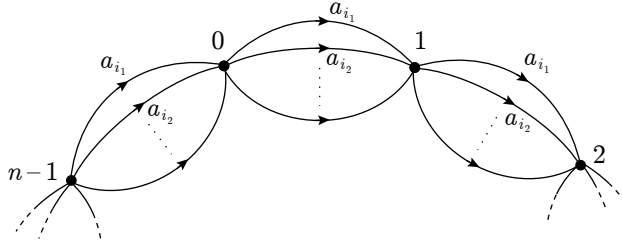}
  \caption{The $1$-skeleton of the base space $B\subset Y^n_{k-1}$ for the
  next mapping torus in our inductive construction.}
  \label{fig:necklace}
\end{figure}

\begin{lemma}\label{lem:baserank}
  If $\Sigma$ is $1$-dimensional, then at the $k$th stage of this process the
  fundamental group of the base space $B$ is free of rank $1+n(\abs{S_k}-1)$.
\end{lemma}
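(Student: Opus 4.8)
The plan is to show that, when $\Sigma$ is $1$-dimensional, the base space $B$ is exactly the ``necklace'' graph of Figure~\ref{fig:necklace} --- that is, $\abs{S_k}$ copies of an $n$-gon glued together along a common set of $n$ vertices --- and then to read off $\pi_1(B)$ from an Euler characteristic count.

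The first step, and the only one with any content, is to check that $B$ carries no cells of dimension $\ge 2$. Because $\Sigma$ is a graph, Corollary~\ref{cor:eulchar} (and the discussion preceding it) tells us that $K_\Sigma$, and hence its cover $Y^n_{k-1}$, has cells only in dimensions $\le 2$, and that each $2$-cell of $K_\Sigma$ corresponds to an edge $\{v_i,v_j\}$ of $\Sigma$; the boundary of such a cell runs along edges labelled both $a_i$ and $a_j$, so any lift of it to $Y^n_{k-1}$ (where necessarily $i\ne j$ and $i,j\le k-1$) meets both of the circles $\alpha_i$ and $\alpha_j$ in the $1$-skeleton. For such a $2$-cell to be absorbed into $B$ we would therefore need both $v_i$ and $v_j$ to lie in $S_k$, i.e.\ both to be adjacent to $v_k$; together with the edge $\{v_i,v_j\}$ this gives a triangle in $\Sigma$, contradicting the triangle-free hypothesis. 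Hence $B=\bigcup_{v_i\in S_k}\alpha_i$ is genuinely $1$-dimensional.

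It then remains to do the bookkeeping. Each $\alpha_i$ is an embedded $n$-gon on the common vertex set $\{0,\dots,n-1\}$, and distinct loops $\alpha_i$ share no edges because their edges carry the distinct labels $a_i$; hence $B$ has $n$ vertices and $n\abs{S_k}$ edges. The vertices were ordered so that $v_k$ lies in the closed star of some $v_i$ with $i<k$, which in a $1$-dimensional complex means $\{v_i,v_k\}$ is an edge, so $S_k\ne\emptyset$ and the loop $\alpha_i$ already visits every vertex of $B$; thus $B$ is connected. A connected finite graph is homotopy equivalent to a wedge of $1-\chi(B)$ circles, so
\[
\rk\pi_1(B)=1-\chi(B)=1-\bigl(n-n\abs{S_k}\bigr)=1+n(\abs{S_k}-1),
\]
as required. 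The only real obstacle is the first step: one must be certain that triangle-freeness is exactly what keeps $B$ a graph, since in the higher-dimensional case $2$-cells genuinely do get absorbed into $B$ and the conclusion of the lemma fails.
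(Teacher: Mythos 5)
Your proof is correct and follows essentially the same route as the paper, which simply observes that for $1$-dimensional $\Sigma$ the base $B$ is a connected graph with $n$ vertices and $n\abs{S_k}$ edges and reads off the rank from the Euler characteristic. Your additional verification that no $2$-cells are absorbed into $B$ (via triangle-freeness, i.e.\ the flag condition in dimension one) and that $B$ is connected merely makes explicit what the paper leaves implicit.
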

\begin{proof}
  If $\Sigma$ is $1$-dimensional then $B$ is a graph with $n$ vertices and
  $n\abs{S_k}$ edges.
\end{proof}

There is an obvious label-preserving shift map $\sigma:B\to B$, which permutes
the vertices of each $\alpha_i$ as the $n$-cycle $(0~1~\cdots~n-1)$. We glue
the mapping torus of $\sigma$ into $Y^n_{k-1}$ along $B$ to form $Y^n_k$.  In
the obvious cell structure on the mapping torus, there are new $1$-cells
joining $p$ and $p+1$ (mod~$n$) for $p=0,\ldots,n-1$: we label each of these
by $a_k$. Then $\alpha_k=a_k^n$ is a loop in $Y_k^n$. By the
Seifert--van~Kampen theorem, we can obtain a presentation $\mathscr{P}^n_k$
for $\pi_1(Y_k^n)$ by adding to $\mathscr{P}^n_{k-1}$ a single generator
$t_k=a_ka_j^{-1}$ (where $a_j<a_k$ is a choice of vertex containing $a_k$ in
its star), and for each element in a generating set for $\pi_1(B)$ a relation
describing the action of $t_k$ on that generator.

\begin{remark}
  When $\Sigma$ is a graph, this construction of a presentation is really
  algorithmic: indeed, we shall write down the presentations it produces
  explicitly in \sect\ref{sec:explicitpres}. The complication for
  higher-dimensional $\Sigma$ is that the base spaces of the mapping tori will
  be not be as simple as the `necklaces' that occur in the $1$-dimensional
  case: in fact, their fundamental groups can be quite complicated, and
  finding a suitable generating set for these groups is already non-trivial.
  Although presentations of the form we have described still exist abstractly
  in this case, writing them down concretely is no longer straightforward
  (cf.~\sect\ref{sec:limitations}).
\end{remark}

We are now in a position to prove Proposition~\ref{lth:vertpres}.

\begin{proposition}\label{th:vertpres}
  Let $\Sigma$ be a connected finite flag complex. For each integer $n\ge1$,
  there is a generating set for $\Gamma_n=H_{\Sigma}\rtimes n\Z$ indexed by
  the vertices of $\Sigma$, and $\Gamma_n$ cannot be generated by fewer
  elements.
\end{proposition}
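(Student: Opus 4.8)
The plan is to prove the proposition in two halves: the upper bound $d(\Gamma_n)\le N$ (there is a generating set indexed by the vertices) and the lower bound $d(\Gamma_n)\ge N$.

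For the upper bound, I would invoke the main construction of \sect\ref{sec:mainconstruct}. The iterated mapping torus $Y^n_N$ is an Eilenberg--Mac~Lane space for $\Gamma_n$, and the inductive presentation $\mathscr{P}^n_N$ of $\pi_1(Y^n_N)=\Gamma_n$ is built by starting with the single generator $\alpha_1=a_1^n$ and, at each of the stages $k=2,\ldots,N$, adjoining exactly one new generator $t_k=a_ka_j^{-1}$. Hence $\mathscr{P}^n_N$ has precisely $N$ generators $\alpha_1,t_2,\ldots,t_N$, which gives a generating set for $\Gamma_n$ of size $N=\abs{\Sigma^{(0)}}$, naturally indexed by the vertices. (One should remark that the ordering of the vertices required for the construction exists because $\Sigma$ is connected, so that each $a_j$ lies in the star of some earlier vertex.)

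For the lower bound I would pass to abelianizations. The quotient map $\pi:G_\Sigma\to\Z$ restricts to a surjection $\pi|_{\Gamma_n}:\Gamma_n\to n\Z\cong\Z$, and there is also the inclusion-induced map on abelianizations. The cleanest route is to compute $H_1(\Gamma_n;\Q)$ and show it has dimension at least $N$; since $d(\Gamma_n)\ge\dim_\Q H_1(\Gamma_n;\Q)$, this finishes the proof. To compute $H_1(\Gamma_n;\Q)$ I would use the Eilenberg--Mac~Lane space $Y^n_N$, or alternatively a transfer/averaging argument over the $\Z/n$-action on the cover: the cellular chain complex of $Y^n_N$ in low degrees is explicit from the mapping-torus construction, and the shift maps $\sigma$ act on the relevant $\Q$-homology of the base necklaces with enough fixed vectors to force $\dim H_1\ge N$. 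Another, perhaps more transparent, approach is to use the fact that $\Gamma_n$ surjects onto $\Gamma_n/[H,H]H^{(n)}$-type quotients, or simply to observe that $\Gamma_n$ maps onto $\Z^N$: indeed the assignment $e_i\mapsto$ (something) should exhibit a retraction-like surjection; I would make this precise by sending $\alpha_1\mapsto(n,0,\ldots,0)$ and $t_k\mapsto(-1,\ldots)$ appropriately, or by using the known structure $H_1(G_\Sigma)=\Z^N$ together with the index-$n$ inclusion.

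The main obstacle is the lower bound, and specifically getting a clean uniform statement that $d(\Gamma_n)\ge N$ for \emph{all} $n$, not just $n=1$. For $n=1$ we have $\Gamma_1=G_\Sigma$ with $H_1=\Z^N$, so $d\ge N$ is immediate; for larger $n$ the abelianization of $\Gamma_n$ is more delicate because $H=H_\Sigma$ contributes torsion and rank depending on the topology of $\Sigma$, and one must check the rank of $H_1(\Gamma_n;\Q)$ does not drop. I expect the right tool is the Wang exact sequence for $\Gamma_n=H\rtimes n\Z$ (mapping torus over $S^1$), which gives $H_1(\Gamma_n;\Q)\cong \Q\oplus \operatorname{coker}(t^n-1\mid H_1(H;\Q))$; the generator $t$ acts on $H_1(H;\Q)$ through a finite-order-free but unipotent-free... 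I would argue that $t$ acts trivially on $H_1(H;\Q)$ (since $t$ acts on $H$ conjugation by a lift of a generator of $\Z$, and the induced action on $H_{\mathrm{ab}}\otimes\Q$ is trivial — this can be checked directly on the Bestvina--Brady presentation of $H$), so that $\operatorname{coker}(t^n-1)=H_1(H;\Q)$ for every $n$, whence $\dim_\Q H_1(\Gamma_n;\Q)=1+\dim_\Q H_1(H;\Q)=N$ independently of $n$. Verifying that the $t$-action on $H_1(H;\Q)$ is trivial is the technical heart of the argument, and I would isolate it as a short lemma.
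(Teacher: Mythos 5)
Your proposal is correct, and the upper bound is exactly the paper's argument (one generator per vertex from the iterated mapping torus, or equivalently $a_1^n$ together with the $a_1a_i^{-1}$). For the lower bound, however, you take a genuinely longer route than the paper. The paper simply notes that $\Gamma_n$ has finite index in $G$, so by the transfer the rational first Betti number cannot drop on passing to $\Gamma_n$: $\dim_{\Q}H_1(\Gamma_n;\Q)\ge\dim_{\Q}H_1(G;\Q)=N$ (the printed ``$\le$'' is evidently a typo), and hence $d(\Gamma_n)\ge N$. This is one line and avoids any analysis of $H$. Your Wang-sequence argument for $\Gamma_n=H\rtimes n\Z$ does work and buys slightly more --- it computes $\dim_{\Q}H_1(\Gamma_n;\Q)=N$ exactly rather than bounding it below --- but its ``technical heart'', the triviality of the $t$-action on $H_1(H;\Q)$, is left to a direct check on a presentation of $H$, which is awkward ($H$ need not be finitely presented, and the conjugation action is not transparent on the Dicks--Leary generators). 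The clean way to close that gap is a dimension count: since $\Sigma$ is connected, $H$ is generated by the $N-1$ elements $a_1a_i^{-1}$, so $\dim_{\Q}H_1(H;\Q)\le N-1$; on the other hand the five-term exact sequence for $1\to H\to G\to\Z\to1$ gives $(H_1(H;\Q))_{\Z}\cong\ker\bigl(H_1(G;\Q)\to\Q\bigr)\cong\Q^{N-1}$, and since the coinvariants are a quotient of $H_1(H;\Q)$ this forces $H_1(H;\Q)\cong\Q^{N-1}$ with the quotient map an isomorphism, i.e.\ $(t-1)$ acts as zero. With that lemma in place your Wang sequence gives $\dim_{\Q}H_1(\Gamma_n;\Q)=1+(N-1)=N$ for every $n$, as claimed. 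I would also prune the several hedged alternatives in your lower-bound paragraph (e.g.\ the unproved assertion that $\Gamma_n$ surjects onto $\Z^N$ is a consequence of the computation, not an independent argument) and commit to one of the two proofs above.
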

\begin{proof}
  The first assertion is immediate from the construction above.
  (Alternatively, one can observe that if $\Sigma^{(0)}=\{a_1,\ldots,a_N\}$
  then $H$ is generated by the elements $a_1a_i^{-1}$ with $i>1$
  and $\Gamma_n$ is generated by these elements together with $a_1^n$.)
  
  To see that $\Gamma_n$ cannot be generated by fewer
  than~$N=\abs{\Sigma^{(0)}}$ elements, one simply observes that
  $\dim_{\Q}H_1(G;\Q)=N$ and $\Gamma_n$ has finite index in $G$, so
  $\dim_{\Q}H_1(\Gamma_n;\Q)\le N$.
\end{proof}

\subsection{Calculating the deficiency}
For the remainder of this section we shall assume that $\Sigma$ is a
\emph{finite flag graph}; we retain the notation of the previous subsection.

\begin{theorem}\label{th:graphdefic}
  The space $Y^n_N$ is non-positively curved and has fundamental group
  $\Gamma_n$. Moreover, $\mathscr{P}^n_N$ is a presentation of $\Gamma_n$ with
  $N$ generators and $N-1+n(1-\chi(\Sigma))$ relations, and this presentation
  is efficient.
\end{theorem}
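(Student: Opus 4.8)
The plan is to verify the three assertions in turn, since the main construction has already done most of the work. For the first — that $Y^n_N$ is non-positively curved with $\pi_1=\Gamma_n$ — I would argue that $Y^n_N$ is, up to a choice of piecewise-Euclidean metric, isometric to the $n$-sheeted cover $\hat K$ of the Charney--Davis complex $K_\Sigma$ corresponding to $\Gamma_n=\pi^{-1}(n\Z)$. Indeed, the iterated-mapping-torus construction exactly reproduces $\hat K$ cell by cell: at each stage one glues in the preimage of the torus in $K$ spanned by $a_k$, which (since $a_k$ has exponent-sum $1$ under $\pi$) lifts to a single ``necklace'' fibring over a circle via the shift $\sigma$. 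Since $K$ is locally $\CAT(0)$ by Proposition~\ref{prop:KG_RAAG} and the covering is finite-sheeted, $\hat K$ inherits a locally $\CAT(0)$ metric, and $\pi_1(\hat K)=\Gamma_n$ by covering-space theory. This identification simultaneously tells us $Y^n_N$ is aspherical.

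For the count of generators and relations I would simply track the bookkeeping in the inductive construction. There is one generator $\alpha_1$ at stage $1$ and one new generator $t_k$ at each subsequent stage $k=2,\dots,N$, giving $N$ generators total. At stage $k$ we add one relation for each element of a chosen generating set of $\pi_1(B)$, and by Lemma~\ref{lem:baserank} this number is $1+n(\abs{S_k}-1)$. Summing over $k=2,\dots,N$ gives
\[
\sum_{k=2}^{N}\bigl(1+n(\abs{S_k}-1)\bigr)=(N-1)-n(N-1)+n\sum_{k=2}^{N}\abs{S_k}.
\]
Now $\sum_{k=2}^N\abs{S_k}$ counts each edge of $\Sigma$ exactly once (the edge $\{v_i,v_k\}$ with $i<k$ is counted in $S_k$), so it equals the number of edges $E$ of $\Sigma$. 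Using $\chi(\Sigma)=N-E$ for a graph, the total becomes $(N-1)+n(E-(N-1))=N-1+n(1-\chi(\Sigma))$, as claimed. (One should also check that the $t_k$ together with $\alpha_1$ really are the vertex-indexed generators of Proposition~\ref{th:vertpres}, so that the presentation genuinely is on a minimal generating set, but this is immediate from $t_k=a_ka_j^{-1}$ and $\alpha_1=a_1^n$.)

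The substantive point is efficiency. By Lemma~\ref{lem:defdef} it suffices to show the deficiency $d(F)-d_F(R)$ — here $N-(N-1+n(1-\chi(\Sigma)))=1-n(1-\chi(\Sigma))=1-n\,\chi(G_\Sigma)$ using Corollary~\ref{cor:eulchar}, wait, rather $=n\chi(\Sigma)-n+1$ — matches $\rk H_1(\Gamma_n)-d(H_2(\Gamma_n))$ from below; equivalently, that our presentation's deficiency equals the lower bound. The clean way is to compute $H_1(\Gamma_n;\Z)$ and $H_2(\Gamma_n;\Z)$ directly from the locally $\CAT(0)$ cell structure on $Y^n_N=\hat K$: since $Y^n_N$ is aspherical its homology is the group homology, and we can read it off the cellular chain complex of $\hat K$. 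One has $\chi(\Gamma_n)=n\,\chi(G)=n(1-\chi(\Sigma))$ by multiplicativity of Euler characteristic in finite covers together with Corollary~\ref{cor:eulchar}, while $\rk H_1(\Gamma_n)=N$ by the argument in Proposition~\ref{th:vertpres}; so $\rk H_2(\Gamma_n)-\rk H_3(\Gamma_n)+\cdots = N-1+n(1-\chi(\Sigma))\cdot(\pm)$ — and since $\Sigma$ is a graph, $\hat K$ is $2$-dimensional, so $H_j(\Gamma_n)=0$ for $j\ge3$ and $\rk H_2(\Gamma_n)=N-1+n(1-\chi(\Sigma))-N+1\cdot\ldots$. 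I would organize this as: (i) $H_3$ and higher vanish because $\dim\hat K=2$; (ii) $H_2(\Gamma_n;\Z)$ is free abelian, being the kernel of a map of free abelian groups in the cellular chain complex of the $2$-complex $\hat K$; (iii) hence $d(H_2(\Gamma_n))=\rk H_2(\Gamma_n)$, and Euler characteristic gives $\rk H_2(\Gamma_n)=\rk H_1(\Gamma_n)-1+n(1-\chi(\Sigma))-? $ — precisely $\rk H_2-\rk H_1+1=\chi(\Gamma_n)$, so $\rk H_2=N-1+\chi(\Gamma_n)=N-1+n(1-\chi(\Sigma))-?$. Getting these signs exactly right is the one place demanding care. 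The conclusion is that the lower bound in Lemma~\ref{lem:defdef} equals the deficiency of $\mathscr P^n_N$, so the presentation is efficient.

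The main obstacle I anticipate is not any single deep step but rather assembling (ii): showing $H_2(\Gamma_n;\Z)$ is torsion-free so that $d(H_2)=\rk(H_2)$. The quickest route is to note that $Y^n_N$ being a $2$-dimensional locally $\CAT(0)$ complex, its universal cover is $\CAT(0)$ hence contractible, and $H_2$ of a group with a $2$-dimensional $\KXone{\Gamma}$ is always a subgroup of a free abelian group (the cellular $2$-cycles), hence torsion-free. Once that is in hand the efficiency is a matter of matching two explicit integers.
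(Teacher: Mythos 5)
Your proposal is correct and follows essentially the same route as the paper: the cell count is identical, and your efficiency argument --- asphericity of the $2$-dimensional model, torsion-freeness of $H_2(\Gamma_n)$, and an Euler-characteristic computation showing the presentation meets the lower bound of Lemma~\ref{lem:defdef} --- is precisely what the paper's terse ``consequently the presentation is efficient'' unpacks to. The only cosmetic differences are that the paper gets non-positive curvature directly from the Bridson--Haefliger gluing theorem rather than from the identification of $Y^n_N$ with the cover $\hat{K}$ (which it relegates to a remark), and cites Scott--Wall for asphericity of the presentation complex instead of spelling out the homology.
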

\begin{proof}
  If we metrize each $\alpha_i$ as a circle of length $n$ then all our gluing
  maps are isometries, and it follows immediately from~\cite[II.11.13]{BH}
  that $Y^n_N$ is non-positively curved. The labels on the $1$-cells of
  $Y^n_N$ show how to define a covering projection from $Y^n_N$ to the
  standard $\KXone{G_{\Sigma}}$; by construction, this cover is
  regular and $\Gamma_n$ is its fundamental group.  Our presentation is
  obtained from a free group by repeated HNN extensions along free subgroups,
  so the corresponding one-vertex $2$-complex is
  aspherical~\cite[Proposition~3.6]{ScottWallart}; indeed, it is visibly
  homotopy equivalent to $Y^n_N$, which is aspherical since it is
  non-positively curved. Consequently the presentation is efficient.
  Finally, by Lemma~\ref{lem:baserank} the number of relations is
  \[
  \sum_{i=2}^N\big(1+n(\abs{S_i}-1)\big)=N-1+n(1-\chi(\Sigma)).
  \]
\end{proof}

\begin{remark}
  The complex $Y^n_{N}$ is homeomorphic to the $n$-sheeted cover of the
  standard cubical $\KXone{G_{\Sigma}}$ with fundamental group $\Gamma_n$, but
  we have given it a different cell structure.
\end{remark}

\begin{corollary}\label{cor:graphdefic}
  When $\Sigma$ is a graph,
  \begin{equation*}
    \defic(\Gamma_n)=1+n(\chi(\Sigma)-1).
  \end{equation*}
  In particular, $\defic(\Gamma_n)\to\infty$ as $n\to\infty$ if and only if
  $H_{\Sigma}$ fails to be finitely presented.
\end{corollary}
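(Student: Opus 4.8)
The plan is to combine Theorem~\ref{th:graphdefic} with the lower bound of Lemma~\ref{lem:defdef} to pin down $\defic(\Gamma_n)$ exactly. First I would invoke Theorem~\ref{th:graphdefic}: it exhibits an efficient presentation $\mathscr{P}^n_N$ with $N$ generators and $N-1+n(1-\chi(\Sigma))$ relations, hence of deficiency $N-1+n(1-\chi(\Sigma))-N=n(1-\chi(\Sigma))-1 = -1-n(1-\chi(\Sigma))$; rewriting, this deficiency is $1+n(\chi(\Sigma)-1)$. Wait---I must be careful with the sign convention in \sect\ref{sec:backgrounddef}, where deficiency is $d_F(R)-d(F)$, i.e.\ (number of relations) minus (number of generators). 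With that convention the presentation $\mathscr{P}^n_N$ has deficiency $\bigl(N-1+n(1-\chi(\Sigma))\bigr)-N = n(1-\chi(\Sigma))-1$. So what I actually need is to show $\defic(\Gamma_n)=n(1-\chi(\Sigma))-1$; the statement as written must therefore be using the opposite sign convention (the one flagged parenthetically in the text), and I would state the result in whichever convention is consistent with the rest of the paper, noting that Theorem~\ref{th:graphdefic} already gives the required presentation.

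The substantive point is the matching lower bound. Since Theorem~\ref{th:graphdefic} tells us $\mathscr{P}^n_N$ is \emph{efficient}, by definition it realizes $\defic(\Gamma_n)$, so there is in fact nothing further to prove for the equality---efficiency is exactly the statement that this presentation attains the infimum. For completeness, or if one wishes to avoid leaning on the word "efficient," one can argue directly: by Lemma~\ref{lem:defdef} every finite presentation of $\Gamma_n$ has deficiency at least $d(H_2(\Gamma_n))-\rk(H_1(\Gamma_n))$, and for the aspherical complex $Y^n_N$ one computes $H_1$ and $H_2$ from its cellular chain complex (or, more cheaply, from the fact that $\mathscr{P}^n_N$ is aspherical, so its relation module is free and $H_2(\Gamma_n)$ is read off directly), giving the bound $n(1-\chi(\Sigma))-1$; this matches the deficiency of $\mathscr{P}^n_N$.

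Finally, the "in particular" clause: the map $n\mapsto n(1-\chi(\Sigma))-1$ is strictly increasing and unbounded precisely when $1-\chi(\Sigma)>0$, i.e.\ when $\chi(\Sigma)<1$, equivalently $\chi(\Sigma)\le 0$ since $\Sigma$ is a graph and $\chi$ is an integer. For a connected graph $\Sigma$, $\chi(\Sigma)=1$ exactly when $\Sigma$ is a tree, and $\chi(\Sigma)\le 0$ exactly when $\Sigma$ has a nontrivial cycle, i.e.\ $\pi_1(\Sigma)\ne1$; and a connected $1$-dimensional complex is simply connected if and only if it is a tree. By Theorem~\ref{th:BB}\eqref{BBfp}, $H_\Sigma$ is finitely presented if and only if $\Sigma$ is simply connected, i.e.\ a tree. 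Hence $\defic(\Gamma_n)\to\infty$ if and only if $\Sigma$ is not a tree, if and only if $H_\Sigma$ is not finitely presented. I do not anticipate a real obstacle here; the only point requiring care is the sign convention for deficiency, and the only input beyond bookkeeping is the already-established efficiency in Theorem~\ref{th:graphdefic}.
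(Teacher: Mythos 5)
Your proposal is correct and takes essentially the same route as the paper, whose entire proof is the one line ``Since $\mathscr{P}^n_N$ is efficient, it realizes the deficiency of the group,'' with the ``in particular'' clause left to the reader via Theorem~\ref{th:BB}. Your attention to the sign is warranted: with the paper's convention $\defic = d_F(R)-d(F)$ the presentation $\mathscr{P}^n_N$ has deficiency $n(1-\chi(\Sigma))-1$, which is the value that makes the divergence-to-$+\infty$ claim consistent, so the displayed formula in the corollary is indeed written in the opposite convention.
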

\begin{proof}
  Since $\mathscr{P}^n_N$ is efficient, it realizes the deficiency of the
  group.
\end{proof}

\begin{remark}
  The proof of Theorem~\ref{th:graphdefic} breaks down if $\Sigma$ has
  dimension~$>1$: the question of when we obtain presentations realizing the
  deficiencies of the $\Gamma_n$ in this way is a delicate one.
\end{remark}

\section{Explicit presentations when $\Sigma$ is triangle-free}%
\label{sec:explicitpres}
\noindent In \sect\ref{sec:computedefic} we gave an algorithm that, in
principle, one can apply to obtain efficient presentations for the groups
$\Gamma_n$ associated to a finite $1$-dimensional flag complex $\Sigma$. The
purpose of this section is to actually carry out this procedure and thus write
down completely explicit presentations for the groups $\Gamma_n$.

\subsection{A special case}\label{sec:specialcase}
Before presenting the general case, we first describe the particular case when
$\Sigma$ is a cyclic graph of length~$6$, with vertices ordered as shown in
Figure~\ref{fig:triangle}.
\begin{figure}[tb]
  \centering
  \includegraphics{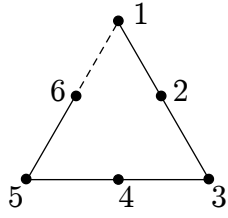}
  \caption{The complex $\Sigma$ in \sect\ref{sec:specialcase}}
  \label{fig:triangle}
\end{figure}
There are two reasons for this: firstly, the notation in the general case is
unwieldy, and it is helpful to first consider this simpler situation, which
contains all the essential ideas; and secondly we shall later build on this
example when we discuss $2$-dimensional flag complexes.

Let us fix $n$ and work through the construction of
\sect\ref{sec:mainconstruct} for the group $\Gamma_n$.

\begin{asparaenum}[\bfseries {Vertex} 1]
\item Our initial space $Y_1$ is the circle with its $n$-vertex cellulation.
  Each edge is labelled $a_1$, and the presentation we take for $\pi_1(Y_1)$
  is
  \[
  \mathscr{P}_1=\Pres{\lambda}{},
  \]
  where $\lambda$ is the loop $a_1^n$. From now on we shall abuse notation and
  write $\lambda=a_1^n$.
\item We form a mapping torus by the degree~$1$ shift map, which rotates the
  base circle by $2\pi/n$; on the fundamental group, this has the effect of an
  HNN~extension with stable letter $t_2=a_2a_1^{-1}$, and our second
  presentation is
  \[
  \mathscr{P}_2=\Pres{\lambda,t_2}{[t_2,\lambda]}.
  \]
\item The base space is the circle $a_2^n=t_2^n\lambda$; our new stable letter
  is $t_3=a_3a_2^{-1}$, and our next presentation is
  \[
  \mathscr{P}_3=\Pres{\lambda,t_2,t_3}{[t_2,\lambda],[t_3,t_2^n\lambda]}.
  \]
\item The base of our mapping torus is now the circle
  \begin{align*}
    a_3^n&=(a_3^na_2^{-n})(a_2^na_1^{-n})a_1^n\\
    &=t_3^nt_2^n\lambda,
  \end{align*}
  and with respect to the stable letter $t_4=a_4a_3^{-1}$ our presentation is
  \[
  \mathscr{P}_4=\Pres{\lambda,t_2,t_3,t_4}{[t_2,\lambda],[t_3,t_2^n\lambda],[t_4,t_3^nt_2^n\lambda]}.
  \]
  Notice that our relations include a path back to the initial vertex of
  $\Sigma$, which acts as a global basepoint.
\item In just the same way, we add another stable letter $t_5=a_5a_4^{-1}$ and
  get a presentation
  \[
  \mathscr{P}_5=\Pres{\lambda,t_2,t_3,t_4,t_5}{[t_2,\lambda],[t_3,t_2^n\lambda],[t_4,t_3^nt_2^n\lambda],[t_5,t_4^nt_3^nt_2^n\lambda]}.
  \]
\item We now come to the final vertex, $a_6$. This has two predecessor
  vertices in our ordering, namely $a_1$ and $a_5$, so the base space
  $B$ for the final mapping torus is a necklace with two strands
  (cf.~Figure~\ref{fig:necklace}): one has edges labelled $a_1$ and the other
  has edges labelled $a_5$. Let us choose and name a set of generators for
  $\pi_1(B)$, which is a free group of rank $n+1$:
  \begin{equation*}
    \begin{aligned}
      w_0&=a_5a_1^{-1}=(a_5a_4^{-1})(a_4a_3^{-1})(a_3a_2^{-1})(a_2a_1^{-1})=t_5t_4t_3t_2\\
      w_1&=a_5w_0a_5^{-1}=(a_5^2a_4^{-2})(a_4^2a_3^{-2})(a_3^2a_2^{-2})(a_2^2a_1^{-2})(a_1a_5^{-1})=t_5^2t_4^2t_3^2t_2^2w_0^{-1}\\
      w_2&=a_5^2w_0a_5^{-2}=t_5^3t_4^3t_3^3t_2^3w_0^{-1}w_1^{-1}\\
      &\ldots\\
      w_{n-1}&=a_5^{n-1}w_0a_5^{1-n}=t_5^nt_4^nt_3^nt_2^nw_0^{-1}w_1^{-1}\cdots
      w_{n-2}^{-1}\\
      \alpha_5&=a_5^n=t_5^nt_4^nt_3^nt_2^n\lambda.
    \end{aligned}
  \end{equation*}
  If we take our stable letter to be $t_6=a_6a_5^{-1}$ then we can work out
  how $t_6$ acts on these generators:
  \begin{equation*}
    \begin{aligned}
    t_6w_0t_6^{-1}&=a_5^{-1}w_0a_5
    =a_5^{-n}a_5^{n-1}w_0a_5^{1-n}a_5^n
    =\alpha_5^{-1}w_{n-1}\alpha_5\\
    t_6w_1t_6^{-1}&=a_5^{-1}a_5w_0a_5^{-1}a_5
    =w_0\\
    &\ldots\\
    t_6w_{n-1}t_6^{-1}&=w_{n-2}\\
    t_6\alpha_5t_6^{-1}&=\alpha_5.
  \end{aligned}
  \end{equation*}
  Therefore our final presentation for $\Gamma_n$ is
  \begin{multline*}
    \mathscr{P}_6=\Pres{\lambda,t_2,t_3,t_4,t_5,t_6}{[t_2,\lambda],[t_3,t_2^n\lambda],[t_4,t_3^nt_2^n\lambda],[t_5,t_4^nt_3^nt_2^n\lambda],\\
    [t_6,t_5^nt_4^nt_3^nt_2^n\lambda],\mathscr{F}_n},
  \end{multline*}
  where
  \[
  \mathscr{F}_n=
  \begin{cases}
    t_6w_0t_6^{-1}=\alpha_5^{-1}w_{n-1}\alpha_5\\
    t_6w_it_6^{-1}=w_{i-1} & (i=1,\ldots,n-1).
  \end{cases}
  \]
\end{asparaenum}

\subsection{The general case}\label{sec:generalcase}
Now let $\Sigma$ be any finite $1$-dimensional flag complex. Choose a maximal
tree $T\subset\Sigma$, and as in \sect\ref{sec:mainconstruct} fix
an ordering $v_1<\cdots<v_N$ of the vertices of $\Sigma$ such that each vertex
(apart from $v_1$) is adjacent to some vertex that precedes it.  Given a pair
of vertices $v$ and $v'$ in $\Sigma$, there is a unique path
$(v=v_{p_1},v_{p_2},\ldots,v_{p_s}=v')$ from $v$ to $v'$ in $T$. Set
\[
\theta^i(v,v')=t_{p_1}^it_{p_2}^i\cdots t_{p_{k-1}}^i.
\]
(For the moment, these are just formal words in the alphabet $\{t_i\}$.)

As always, we start with the presentation $\mathscr{P}_1=\Pres{\lambda}{}$.
Suppose inductively that we have constructed the presentation
$\mathscr{P}_{k-1}$. There is a distinguished predecessor of $v_k$ in our
order, namely the unique vertex $v_{\alpha}<v_k$ such that $v_{\alpha}$ and
$v_k$ are connected by an edge in the maximal tree $T$. There might also be
other vertices $v_{\beta_1},\ldots,v_{\beta_r}$ such that $v_{\beta_i}<v_k$
and $v_{\beta_i}$ is adjacent to $v_k$ in $\Sigma$ (so here $r\ge0$).  The base
space $B$ for the next mapping torus in our construction is now a necklace
with $r+1$ strands, and choosing a presentation for $\pi_1(B)$ amounts to
choosing a maximal tree in $B$. We take this tree to be $a_{\alpha}^{n-1}$,
where $v_{\alpha}$ is our distinguished vertex. For the purposes of our
presentation, the stable letter for the next HNN~extension will be
$t_k=a_ka_{\alpha}^{-1}$.

This gives us the following generating set for $\pi_1(B)$:
\begin{equation*}
  \left.
  \begin{aligned}
    w_{i,0}&=\theta^1(v_k,v_{\beta_i})\\
    w_{i,1}&=\theta^2(v_k,v_{\beta_i})w_{i,0}^{-1}\\
    w_{i,2}&=\theta^3(v_k,v_{\beta_i})w_{i,0}^{-1}w_{i,1}^{-1}\\
    &\;\;\ldots\\
    w_{i,n-1}&=\theta^n(v_k,v_{\beta_i})w_{i,0}^{-1}w_{i,1}^{-1}\cdots w_{i,n-2}^{-1}\\
  \end{aligned}
  \quad\right\}
  \qquad i=1,\ldots r,
\end{equation*}
together with $\gamma_k=a_{\alpha}^n=\theta^n(v_{\alpha},v_1)\lambda$; the
stable letter acts by
\begin{equation*}
  \begin{aligned}
    t_kw_{i,0}t_k^{-1}&=\gamma_k^{-1}w_{i,n-1}\gamma_k\\
    t_kw_{i,j}t_k^{-1}&=w_{i,j-1}\quad(j=1,\ldots,n-1)\\
    [t_k,\gamma_k]&=1.
  \end{aligned}
\end{equation*}
Let $\mathscr{F}_{n,k}$ be the family of relations describing this action of
the stable letter on all the words $w_{i,j}$ for $1\le i\le r$ and $0\le j\le
n-1$, a total of $rn$ relations. (If $v_k$ is not adjacent to any of its
predecessors by an edge not in the tree $T$, then $\mathscr{F}_{n,k}$ is
empty.)  We set
\[
\mathscr{P}_k=\mathscr{P}_{k-1}\cup\Pres{t_k}{[t_k,\gamma_k],\mathscr{F}_{n,k}}.
\]
\begin{theorem}\label{th:explicitpres}
  The presentation $\mathscr{P}_N$ is an efficient presentation of $\Gamma_n$.
\end{theorem}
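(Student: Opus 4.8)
The plan is to recognize $\mathscr{P}_N$ as an instance of the presentation $\mathscr{P}^n_N$ produced by the iterated-mapping-torus construction of \sect\ref{sec:mainconstruct} — for the particular free bases of the $\pi_1(B)$ chosen in this section — and then to invoke Theorem~\ref{th:graphdefic}, which already asserts that any presentation arising from that construction (with $\Sigma$ a graph) is efficient. So the real content is bookkeeping: at the $k$th stage one must check that the words $w_{i,j}$ together with $\gamma_k$ form a free basis of $\pi_1(B)$, and that the relations $[t_k,\gamma_k]=1$ and $\mathscr{F}_{n,k}$ correctly record the action of the stable letter $t_k=a_ka_{\alpha}^{-1}$ on that basis.

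First I would pin down the base space $B$ at stage $k$: since $\Sigma$ is a graph, $B$ is the necklace with one strand labelled $a_{\alpha}$ (for the distinguished tree-predecessor $v_{\alpha}$ of $v_k$) and one strand labelled $a_{\beta_i}$ for each further predecessor $v_{\beta_i}$, all sharing the vertices $0,\dots,n-1$, so $\pi_1(B)$ is free of rank $1+nr$ by Lemma~\ref{lem:baserank}. Choosing the maximal tree of $B$ to be the path $a_{\alpha}^{n-1}$ through all $n$ vertices, each edge of $B$ outside this tree contributes one element of the associated free basis: the remaining $a_{\alpha}$-edge gives $\gamma_k$, and the $a_{\beta_i}$-edge from vertex $j$ to vertex $j+1\ (\mathrm{mod}\ n)$ gives, after the normalization built into the definitions above, the element $w_{i,j}$. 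Unwinding the definition of $\theta^m(v_k,v_{\beta_i})$ and using that consecutive vertices on a tree path are adjacent in $\Sigma$ — so the corresponding Artin generators commute and the relevant products of powers of the $t_j$ telescope — one verifies that the displayed words are precisely these basis loops rewritten in the generators $\{\lambda,t_2,\dots,t_{k-1}\}$ of $\mathscr{P}_{k-1}$, and in particular that $\gamma_k=\theta^n(v_{\alpha},v_1)\lambda$ really equals $a_{\alpha}^n$.

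It remains to verify the conjugation formulas, and here the key point is that $a_k$ commutes in $G_{\Sigma}$ with $a_{\alpha}$ and with every $a_{\beta_i}$ — these being the other endpoints of the edges at $v_k$ — hence with every element of $\pi_1(B)$; so conjugation by $t_k=a_ka_{\alpha}^{-1}$ on $\pi_1(B)$ coincides with conjugation by $a_{\alpha}^{-1}$. Conjugating the basis loop indexed by $j$ by $a_{\alpha}^{-1}$ shifts its $a_{\beta_i}$-edge one vertex backwards, which gives $w_{i,j-1}$ for $1\le j\le n-1$; for $j=0$ the edge wraps past the basepoint, and pushing this wrap-around once around the full $a_{\alpha}$-cycle is exactly what produces the conjugating factor in $t_kw_{i,0}t_k^{-1}=\gamma_k^{-1}w_{i,n-1}\gamma_k$, while $[t_k,\gamma_k]=1$ is immediate. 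This matches the block $\Pres{t_k}{[t_k,\gamma_k],\mathscr{F}_{n,k}}$ appended at stage $k$ with the block that the construction of \sect\ref{sec:mainconstruct} appends for this choice of basis, so $\mathscr{P}_N=\mathscr{P}^n_N$ and Theorem~\ref{th:graphdefic} finishes the proof. I expect the one genuinely fiddly step to be precisely this basepoint bookkeeping at $j=0$: arranging the normalization of the $w_{i,j}$ so that the stable-letter action closes up into the clean cyclic family together with the single twisted relation $t_kw_{i,0}t_k^{-1}=\gamma_k^{-1}w_{i,n-1}\gamma_k$. Everything else is either mechanical or already contained in Theorem~\ref{th:graphdefic}.
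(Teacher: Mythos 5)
Your proposal is correct and follows the paper's own route: the paper's proof of Theorem~\ref{th:explicitpres} is precisely the observation that $\mathscr{P}_N$ is the presentation $\mathscr{P}^n_N$ produced by the iterated-mapping-torus construction of \sect\ref{sec:mainconstruct}, so efficiency is inherited from Theorem~\ref{th:graphdefic}. The bookkeeping you supply---identifying the $w_{i,j}$ and $\gamma_k$ as the free basis of $\pi_1(B)$ attached to the maximal tree $a_{\alpha}^{n-1}$ of the necklace, noting that $a_k$ commutes with all labels occurring in $B$ so that $t_k$ acts as conjugation by $a_{\alpha}^{-1}$, and handling the wrap-around at $j=0$---is exactly the content the paper compresses into ``immediate from the construction.''
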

\begin{proof}
  This is immediate from the construction and Theorem~\ref{th:graphdefic}.
\end{proof}
\begin{remark}
  The choice of a maximal tree in $\Sigma$ is the key technical ingredient
  needed to pass from the special case of~\sect\ref{sec:specialcase} to the
  general case: it provides a consistent way to choose bases for the
  fundamental groups of the successive base spaces for the mapping tori.
\end{remark}

\section{Extension to $2$-complexes: theoretical limitations}%
\label{sec:limitations}
\subsection{The initial aim}
In~\sect\ref{sec:computedefic} we described an algorithm to write down
explicit presentations of the groups $\Gamma_n\subset G_{\Sigma}$ associated
to a $1$-dimensional flag complex $\Sigma$ on a generating set of size
$N=\abs{\Sigma^{(0)}}$ and with $R(n,\Sigma)$ relations, where
$\lim_{n\to\infty}R(n,\Sigma)<\infty$ if and only if the Bestvina--Brady
kernel $H_{\Sigma}$ is finitely presented. One naturally seeks to generalize
this to arbitrary finite flag complexes $\Sigma$ and again construct efficient
presentations of $\Gamma_n$ explicitly; in particular we should like to be
able to prove the following conjecture:
\begin{conjecture}\label{conj:scdef}
  If $\Sigma$ is not simply connected, then $\defic(\Gamma_n)\to\infty$ as
  $n\to\infty$.
\end{conjecture}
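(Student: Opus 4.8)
The plan is to dispose of the conjecture whenever $\Sigma$ fails to be $1$-acyclic by a homological argument, and to reduce the remaining case --- $\Sigma$ $1$-acyclic but not simply connected --- to a $\pi_2$-estimate which is the genuine obstacle.

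Two preliminary reductions. First, by Proposition~\ref{th:vertpres} the group $\Gamma_n$ needs exactly $N=\abs{\Sigma^{(0)}}$ generators, a number independent of $n$; consequently $\dim_\Q H_1(\Gamma_n;\Q)=N$ for all $n$ (it is at most $d(\Gamma_n)=N$, and at least $\dim_\Q H_1(G_\Sigma;\Q)=N$ because the transfer for the finite-index inclusion $\Gamma_n\le G_\Sigma$ is rationally injective). Hence every finite presentation $2$-complex $X_n$ for $\Gamma_n$ satisfies $\chi(X_n)=1-N+\dim_\Q H_2(X_n;\Q)$, so $\defic(\Gamma_n)=\bigl(\inf_{X_n}\dim_\Q H_2(X_n;\Q)\bigr)-N$; the conjecture is therefore the assertion that $\dim_\Q H_2(X_n;\Q)$ cannot be bounded uniformly in $n$. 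Second, Lemma~\ref{lem:defdef} gives $\defic(\Gamma_n)\ge\adef(\Gamma_n)$, so the conjecture holds whenever $\adef(\Gamma_n)\to\infty$; by the results of \sect\ref{sec:diagproof} (cf.\ Theorem~\ref{lth:posres}) this is precisely the case in which $\Sigma$ is not $1$-acyclic. Concretely, when $H_1(\Sigma;\Z)\ne0$ one feeds the Bestvina--Brady description of $H_1(H_\Sigma;\Z)$ as an infinitely generated $\Z[t^{\pm1}]$-module into the Wang sequence of $\Gamma_n=H_\Sigma\rtimes n\Z$ --- equivalently performs a cell count on the $n$-fold cover $Y^n_N$ of $\KXone{G_\Sigma}$ --- to force the number of $\Z\Gamma_n$-generators of the relation module $\ab R$, hence $\adef(\Gamma_n)$, to diverge; in the $1$-dimensional case this recovers the divergence of $\defic(\Gamma_n)$ from Corollary~\ref{cor:graphdefic}. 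So only the case $\Sigma$ $1$-acyclic, not simply connected, remains.

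In that case $\pi_1(\Sigma)$ is a non-trivial perfect group and $H_\Sigma$ is of type $\FPnn{2}$ --- even of type $\FP$ when $\Sigma$ is acyclic --- but is not finitely presented, and now $\adef(\Gamma_n)$ is bounded. Thus no invariant derived from the relation module, from $H_2(\Gamma_n)$, or from the Euler characteristic can detect $\defic(\Gamma_n)\to\infty$: the conjecture here is exactly the assertion that $\Gamma_n$ has a relation gap for all large $n$, so a proof would exhibit the first explicit family of finitely presented groups with a relation gap. This is why the conjecture eludes us.

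The line of attack I would pursue is geometric. One has the aspherical, non-positively curved cube complex $Y^n_N$ of \sect\ref{sec:mainconstruct}; any presentation of $\Gamma_n$ on the $N$ vertex generators determines a presentation $2$-complex $X_n$ with a $\pi_1$-isomorphism $f\colon X_n\to Y^n_N$, and since $\Gamma_n$ contains $\Z^3$ (because $\Sigma$ has a $2$-simplex) and so has cohomological dimension $\ge3$, $X_n$ is not aspherical: the kernel of the surjection $H_2(X_n;\Q)\to H_2(\Gamma_n;\Q)$ --- the image of $\pi_2(X_n)=H_2(\lift X_n)$ --- is non-zero. By the first reduction it would suffice to prove that, \emph{whichever} $X_n$ one takes, this kernel has $\Q$-dimension tending to infinity with $n$. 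The idea would be to produce the required classes from a loop $\gamma$ in the $1$-skeleton of $\Sigma$ representing a non-trivial element of $\pi_1(\Sigma)$: in the simplification procedure of \sect\ref{sec:simplify} the loop $\gamma$ accounts for a family of roughly $n$ relations, and coning $\gamma$ off as in \sect\ref{sec:apps} deletes them, so one would want to show this family is \emph{incompressible} --- that it cannot be spanned by $o(n)$ relations --- arguing that a compression would realise a power of $\gamma$ from only a few conjugates of the surviving relators, incompatibly with $\pi_1(\Sigma)$ being perfect and $\gamma$ having infinite order in $G_\Sigma$. The main obstacle is to make such an argument \emph{coefficient-free}: every linear invariant one could use to certify incompressibility is bounded in this regime, so one apparently needs either an $\ell^2$-type estimate --- a Novikov--Shubin or $L^2$-torsion computation along the tower $G_\Sigma\supseteq\Gamma_1\supseteq\Gamma_2\supseteq\cdots$, using that these invariants of the limit $H_\Sigma$ register its non-finite-presentability --- or a direct combinatorial analysis of how the size-$n$ families of relations produced in \sect\ref{sec:simplify} can or cannot be amalgamated; we have succeeded with neither.
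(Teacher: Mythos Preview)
Your assessment is accurate and matches the paper's own position: Conjecture~\ref{conj:scdef} is \emph{not} proved in the paper, and you correctly refrain from claiming a proof. Your reduction is exactly the paper's: when $\Sigma$ is not $1$-acyclic, Proposition~\ref{prop:mv} gives $\adef(\Gamma_n)\to\infty$ and hence $\defic(\Gamma_n)\to\infty$ via Lemma~\ref{lem:defdef}; when $\Sigma$ is $1$-acyclic but not simply connected, Proposition~\ref{prop:goodkernel} shows $\adef(\Gamma_n)$ is bounded, so the conjecture in this regime is precisely the assertion of a relation gap, which the paper explicitly leaves open (see the discussion in the introduction and \sect\ref{sec:limitations}).

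A few remarks on the details. Your preliminary computation that $\dim_\Q H_1(\Gamma_n;\Q)=N$ is correct but not needed for the reduction, since Lemma~\ref{lem:defdef} already bounds $\defic$ below by $\adef$. Your alternative phrasing of the non-$1$-acyclic case via the Wang sequence for $\Gamma_n=H_\Sigma\rtimes n\Z$ is morally the same as the paper's Mayer--Vietoris argument on the level sets of the Morse function, though the paper's version is cleaner because it works directly with $H_2$ of the finite covers rather than passing through $H_1(H_\Sigma)$. Your proposed attack on the open case via incompressibility of the size-$n$ family attached to a perfect loop $\gamma$ is in the spirit of \sect\ref{sec:simplify}--\sect\ref{sec:apps}, but as you acknowledge, no linear or $L^2$ invariant is known to separate $\defic$ from $\adef$ here; the paper makes no further progress on this either, and Theorem~\ref{th:noalg} shows that any such argument cannot be uniformly algorithmic in $\Sigma$.
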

The purpose of this section is to show that there is a logical obstruction to
establishing this conjecture simply by constructing explicit presentations
realizing the deficiencies of the $\Gamma_n$.

\subsection{A computability obstruction}
\begin{theorem}\label{th:noalg}
  Suppose there is an algorithm that generates a finite presentation
  $\Pres{A(\Sigma)}{R(n,\Sigma)}$ of $\Gamma_n$ for each pair $(n,\Sigma)$
  with $n$ a positive integer and $\Sigma$ a finite flag complex. Suppose
  further that there is a partial algorithm that will correctly determine that
  $\sup_n\abs{R(n,\Sigma)=\infty}$ if $\Sigma$ belongs to a certain collection
  $\mathscr{C}$ of finite flag complexes. Then $\mathscr{C}$ does not coincide
  with the complement of the class of simply connected finite flag complexes.
\end{theorem}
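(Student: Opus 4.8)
The plan is to reduce the statement to a pure fact of recursion theory: the hypotheses force the collection $\mathscr{C}$ to be recursively enumerable, whereas the class of finite flag complexes $\Sigma$ for which $H_{\Sigma}$ is not finitely presented is not. By the first part of the Bestvina--Brady theorem (Theorem~\ref{th:BB}), the latter class is exactly the complement of the class of simply connected finite flag complexes, so establishing these two assertions shows that $\mathscr{C}$ cannot coincide with that complement, which is what we want. Note that the deficiency-theoretic dressing of the hypothesis is immaterial: we shall use nothing about the groups $\Gamma_n$ beyond the bare fact that $\mathscr{C}$ is an r.e.\ set.

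First I would observe that $\mathscr{C}$ is recursively enumerable. The postulated partial algorithm is a partial recursive procedure which, run on a finite flag complex $\Sigma$, halts and reports ``$\sup_n\abs{R(n,\Sigma)}=\infty$'' precisely for $\Sigma\in\mathscr{C}$; thus $\mathscr{C}$ is the halting set of this procedure, and so it is r.e. (The algorithm producing the presentations $\Pres{A(\Sigma)}{R(n,\Sigma)}$ is needed only to give meaning to the quantities $\abs{R(n,\Sigma)}$ referred to by the partial algorithm; it otherwise plays no role.)

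Next I would show that the class $\mathscr{N}$ of finite flag complexes with nontrivial fundamental group is \emph{not} recursively enumerable, by a reduction from the triviality problem for finitely presented groups. Given a finite presentation $P=\Pres{x_1,\ldots,x_k}{r_1,\ldots,r_m}$, one forms its presentation $2$-complex, triangulates it, and barycentrically subdivides to obtain a finite flag complex $\Sigma_P$ with $\pi_1(\Sigma_P)$ isomorphic to the group defined by $P$; the map $P\mapsto\Sigma_P$ is clearly computable, and $\Sigma_P\in\mathscr{N}$ if and only if $P$ defines a nontrivial group. Now the set of finite presentations of the trivial group is r.e.\ --- enumerate the elements of the normal closure of $\{r_1,\ldots,r_m\}$ in the free group on the $x_i$ and wait until all the generators appear --- so if $\mathscr{N}$ were r.e.\ then the set of presentations of nontrivial groups would be r.e.\ too, and hence the triviality problem would be decidable, contradicting the Adian--Rabin theorem. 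Therefore $\mathscr{N}$ is not r.e.; combining this with the first step and Theorem~\ref{th:BB} completes the argument.

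The only step with genuine content, and the one to treat with care, is the reduction: one must check that the passage from a presentation to a \emph{flag} complex is effective and preserves $\pi_1$ on the nose, and one must be precise that it is \emph{non}triviality, not triviality, that fails to be semidecidable (triviality is itself r.e.\ but, being undecidable by Adian--Rabin, is not co-r.e.). Everything else is routine bookkeeping.
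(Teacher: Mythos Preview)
Your proposal is correct and follows essentially the same route as the paper: both argue by contradiction, reducing to the fact that there is no partial algorithm recognizing nontriviality of finitely presented groups, via the computable map $P\mapsto\Sigma_P$ sending a presentation to its (subdivided) presentation $2$-complex. The paper compresses this into three sentences and cites the non-semidecidability of nontriviality directly, whereas you unpack the recursion-theoretic content (that $\mathscr{C}$ is r.e., that triviality is r.e., and that r.e.\ plus co-r.e.\ implies decidable, contradicting Adian--Rabin); this is the same argument with the bookkeeping made explicit.
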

\begin{proof}
  Suppose that $\mathscr{C}$ is indeed the class of non-simply-connected
  finite flag complexes. Then there is a partial algorithm that takes a finite
  presentation and recognizes that that group it presents is non-trivial: form
  the presentation $2$-complex, barycentrically subdivide until one has a flag
  complex, then apply the partial algorithm hypothesized in the statement of
  the theorem.  But it is well known that no such partial algorithm exists
  (see, for example,~\cite[Corollary~12.33]{Rotman}).
\end{proof}

\subsection{A revised aim}
Although Theorem~\ref{th:noalg} rules out the possibility of a constructive
proof of Conjecture~\ref{conj:scdef} in general, we can nonetheless seek a
widely-applicable and effective procedure that will produce small
presentations of $\Gamma_n$ for a large class $\mathscr{C}$ of $2$-complexes.
In particular, if $\mathscr{C}$ is a class in which triviality of $\pi_1$ can
be algorithmically determined, then we might still hope for a complete
algorithm: for example, if $\mathscr{C}$ is the class of flag triangulations
of compact surfaces, or the class of negatively-curved $2$-complexes. We
discuss the first of these classes in \sect\ref{sec:surfaces}.

\section{Extension to $2$-complexes: simplifying presentations}%
\label{sec:simplify}
\noindent In this section we give examples where one can see explicitly how
simplifying the topology of $\Sigma$ (for example, by adding a $2$-simplex to
kill its boundary loop in $\pi_1$) translates into a simplification of the
presentations of the corresponding groups $\Gamma_n\subset G_{\Sigma}$.

\subsection{Mapping torus constructions for $2$-complexes}%
\label{sec:howhigherdim}
Rather than applying the construction of \sect\ref{sec:mainconstruct} directly
to an arbitrary flag complex, we try to simplify the combinatorics in such a
way that we can build directly on what we have done in the $1$-dimensional
case. Throughout this section, we assume that $\Sigma$ has been obtained by
barycentrically subdividing another complex $\Delta$. (Of course, this has no
significance from the point of view of topology.) Let $\Sigma_1$ be the
subcomplex of the $1$-skeleton of $\Sigma$ obtained by deleting the vertices
at the barycentres of the $2$-simplices of $\Delta$, together with the edges
emanating from these vertices.

We begin by choosing a maximal tree $T$ in $\Sigma_1$ and building a
presentation for $\Gamma_n\subset G_{\Sigma_1}$, exactly as in
\sect\ref{sec:mainconstruct}. This will have a collection of size~$n$ families
of relations, indexed by the edges in the complement of $T$ in $\Sigma_1$.

We now consider adding in the missing vertices, `building over' the
$2$-simplices of $\Delta$. Each $2$-simplex $\sigma$ contributes a $3$-torus
to the standard Eilenberg--Mac~Lane space for our groups, which is exactly the
mapping torus of the degree~$1$ shift map on the union of $2$-tori
corresponding to the edges in the boundary of $\sigma$. In terms of $\pi_1$,
we add a new stable letter $\tau_{\sigma}$ and six relations describing its
action on a generating set for the fundamental group of the base of the
mapping torus. In the edge-path groupoid, $\tau_{\sigma}$ is equal to
$b_{\sigma}a_i^{-1}$ for some $i$, where $b_{\sigma}^n$ is the path traversing
once the copy of $S^1$ corresponding to the vertex at the centre of $\sigma$,
and $a_i$ is one of the vertices in the boundary of $\sigma$.  Another way of
thinking about this is that we are extending our maximal tree from $\Sigma_1$
to a maximal tree for the whole $1$-skeleton of $\Sigma$, adjoining the edges
$(b_{\sigma},a_i)$.

We are going to examine how these extra relations can be used to eliminate
size~$n$ families of relations picked up in the first part of the
construction, which only involved $\Sigma_1$.

\subsection{A local argument: completing a 2-simplex}%
\label{sec:completesimplex}
We start by discussing in detail the simplest case, namely when $\Sigma$ is a
$2$-simplex, barycentrically subdivided. Then $\Sigma_1$ is exactly the flag
graph we analysed in \sect\ref{sec:specialcase}, so we will adopt the same
notation as there: order the vertices in the boundary of the simplex as in
Figure~\ref{fig:triangle}, with the $i$th vertex called $a_i$, and let $Y_6$ be
the $2$-complex we finished up with at the end of \sect\ref{sec:specialcase}.
We shall call the extra vertex at the barycentre of the $2$-simplex $b$; it
will come after all the $a_i$ in our order.

We can make a $3$-complex isometric to the cover of the standard
Eilenberg--Mac~Lane space corresponding to the index~$n$ subgroup
$\Gamma_n\subset G_{\Sigma}$ by forming the mapping torus of $Y_6$ by the map
$\sigma$ that acts as a degree~$1$ shift on each circle $a_i^n$ in $Y_6$.

The base $Y_6$ is a union of six tori, glued along coordinate circles. Its
fundamental group is generated by $\lambda,t_2,\ldots,t_6$, and we can get a
presentation for the fundamental group $\Gamma_n$ of the mapping torus of
$\sigma$ by saying how a stable letter $\tau=ba_5^{-1}$ acts on these
generators. For example, one calculates that
\begin{align*}
  \tau t_4\tau^{-1}&=ba_5^{-1}a_4a_3^{-1}a_5b^{-1}\\
  &=a_4a_5^{-1}a_3^{-1}a_5\\
  &=t_5^{-1}(a_3^{-1}a_4)(a_4^{-1}a_5)\\
  &=t_5^{-1}t_4t_5,
\end{align*}
and similarly for the other generators. The resulting presentation is
\begin{multline*}
  \Gamma_n=\Pres{\lambda,t_2,t_3,t_4,t_5,t_6,\tau}{[t_2,\lambda],[t_3,t_2^n\lambda],[t_4,t_3^nt_2^n\lambda],[t_5,t_4^nt_3^nt_2^n\lambda],\\
  [t_6,t_5^nt_4^nt_3^nt_2^n\lambda],\mathscr{F}_n,\mathscr{G}},
\end{multline*}
where $\mathscr{F}_n$ is as in~\sect\ref{sec:specialcase} and
\[
\mathscr{G}=
\begin{cases}
  [\tau,t_6],\\
  [\tau,t_5],\\
  \tau t_4\tau^{-1}=t_5^{-1}t_4t_5,\\
  \tau t_3\tau^{-1}=t_5^{-1}t_4^{-1}t_3t_4t_5,\\
  \tau t_2\tau^{-1}=t_5^{-1}t_4^{-1}t_3^{-1}t_2t_3t_4t_5,\\
  \tau\lambda\tau^{-1}=t_6\lambda t_6^{-1}.
\end{cases}
\]
We know from Theorem~\ref{lth:posres} that the groups $\Gamma_n$ have
presentations in which the number of relations does not go to infinity with
$n$, so for large~$n$ this presentation will have many superfluous relations.
Our aim is to see how one can eliminate the size $n$ family $\mathscr{F}_n$.
Specifically, we will prove:
\begin{proposition}
  $\Gamma_n$ admits the following presentation:
  \begin{multline*}
    \Gamma_n=\Pres{\lambda,t_2,t_3,t_4,t_5,t_6,\tau}{[t_2,\lambda],[t_3,t_2^n\lambda],[t_4,t_3^nt_2^n\lambda],[t_5,t_4^nt_3^nt_2^n\lambda],\\
    [t_6,t_5^nt_4^nt_3^nt_2^n\lambda],\mathscr{G},[t_6^{-1}\tau,w_0]}.
  \end{multline*}
\end{proposition}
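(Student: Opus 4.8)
The plan is to show that the displayed presentation is obtained from the presentation containing the size-$n$ family $\mathscr{F}_n$ (established just above via the mapping-torus construction) by a single Tietze transformation: modulo the five ``bamboo'' relations $[t_2,\lambda],\dots,[t_6,t_5^nt_4^nt_3^nt_2^n\lambda]$ and the six relations $\mathscr{G}$, the family $\mathscr{F}_n$ has the same normal closure as the single relation $[t_6^{-1}\tau,w_0]$. The geometry behind this is transparent. The relations $\mathscr{F}_n$ (together with the last bamboo relation, which is $[t_6,\alpha_5]$ since $\alpha_5=t_5^nt_4^nt_3^nt_2^n\lambda$) record that conjugation by $t_6$ realises on the necklace subgroup $\pi_1(B)$, which is free on $w_0,\dots,w_{n-1},\alpha_5$, the automorphism $\rho_*$ induced by the rotation $\rho$ of the necklace; and the degree-one shift $\sigma$ of $Y_6$ restricts on $B$ to $\rho$, so $\tau$-conjugation, which by construction realises $\sigma_*$, also restricts on $\pi_1(B)$ to $\rho_*$. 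Thus $c:=t_6^{-1}\tau$ centralises $\pi_1(B)$; conversely, once the $\tau$-action is known --- and it is recorded by $\mathscr{G}$ --- the single fact that $c$ commutes with $w_0$ should be enough to rebuild all of $\mathscr{F}_n$.

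The key ingredient is therefore a direct computation, using only $\mathscr{G}$ and the bamboo relations, of how $\tau$ conjugates the generators of $\pi_1(B)$: one verifies that $\tau\alpha_5\tau^{-1}=\alpha_5$, that $\tau w_i\tau^{-1}=w_{i-1}$ for $1\le i\le n-1$, and that $\tau w_0\tau^{-1}=\alpha_5^{-1}w_{n-1}\alpha_5$, i.e.\ that $\tau$-conjugation realises $\rho_*$ on $\pi_1(B)$. For instance $\tau w_0\tau^{-1}=(\tau t_5\tau^{-1})(\tau t_4\tau^{-1})(\tau t_3\tau^{-1})(\tau t_2\tau^{-1})$ telescopes, using $\mathscr{G}$, to $t_2t_3t_4t_5$, and this word equals $\alpha_5^{-1}w_{n-1}\alpha_5$ modulo the bamboo relations; the other cases are similar.

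Granting the computation, the forward Tietze direction is immediate: the first relation of $\mathscr{F}_n$ gives $t_6w_0t_6^{-1}=\alpha_5^{-1}w_{n-1}\alpha_5=\tau w_0\tau^{-1}$, i.e.\ $[t_6^{-1}\tau,w_0]=1$, so adjoining this relation to the original presentation changes nothing. For the converse, work modulo the bamboo relations and $\mathscr{G}$ and use $[c,w_0]=1$. Conjugating this relator by $\tau^{k}$ and using $[\tau,t_6]\in\mathscr{G}$ to see that $\tau^{k}c\tau^{-k}=c$, we obtain $[c,\tau^{k}w_0\tau^{-k}]=1$ as a consequence for every $k$; by the computation, for $k=1,\dots,n-1$ the element $\tau^{k}w_0\tau^{-k}$ is $\alpha_5^{-1}w_{n-k}\alpha_5$, and since $[c,\alpha_5]=1$ (from the bamboo relation $[t_6,\alpha_5]$ and the computed relation $[\tau,\alpha_5]=1$) we may cancel the $\alpha_5$'s and deduce $[c,w_i]=1$ for $1\le i\le n-1$ as well. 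Finally, rewriting each relation $[c,w_i]=1$ by using $\mathscr{G}$ to evaluate $\tau w_i\tau^{-1}$ turns it into exactly the $i$-th member of $\mathscr{F}_n$ --- $t_6w_it_6^{-1}=w_{i-1}$ for $i\ge1$, and $t_6w_0t_6^{-1}=\alpha_5^{-1}w_{n-1}\alpha_5$. Hence $\mathscr{F}_n$ lies in the normal closure of the bamboo relations, $\mathscr{G}$ and $[t_6^{-1}\tau,w_0]$, which completes the Tietze equivalence and proves the proposition.

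The main obstacle is the combinatorial computation in the second paragraph: establishing, purely from $\mathscr{G}$ and the bamboo relations rather than by the geometric slogan ``$\sigma|_B=\rho$'', that $\tau$-conjugation agrees with the necklace rotation $\rho_*$ on the nose --- in particular that it produces the correct conjugating factor $\alpha_5$ at the wrap-around from $w_0$ to $w_{n-1}$, and that $\tau\alpha_5\tau^{-1}=\alpha_5$. This is precisely where the bamboo relations $[t_j,t_{j-1}^n\cdots t_2^n\lambda]$ are used, and one must keep careful track of the recursive definitions of the words $w_i$ and of basepoints. Everything else --- both Tietze directions, and the $\tau^{k}$-conjugation trick --- is then formal.
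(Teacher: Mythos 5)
Your proposal is correct and follows essentially the same route as the paper: its three lemmas verify that $(*)$ holds in $\Gamma_n$, derive the $\tau$-version $\mathscr{F}_n'$ of the family from $\mathscr{G}$ (i.e.\ your computation that $\tau$-conjugation realises the necklace rotation on $\pi_1(B)$, including the wrap-around factor $\alpha_5$), and then recover $\mathscr{F}_n$ by an induction that is exactly your $\tau^k$-conjugation of $[t_6^{-1}\tau,w_0]$ unwound. The step you flag as the main obstacle is indeed where the paper also has to leave the raw presentation and work in the edge-path groupoid of $Y_6$, so the two arguments defer the same routine verification.
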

\begin{corollary}
  $\defic(\Gamma_n)\le5$.
\end{corollary}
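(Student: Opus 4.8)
The plan is to show that the size-$n$ family $\mathscr{F}_n$ becomes a consequence of the remaining relations together with the single new relation $[t_6^{-1}\tau,w_0]$, and conversely that $[t_6^{-1}\tau,w_0]$ follows from the original presentation. Recall from \sect\ref{sec:specialcase} that $\mathscr{F}_n$ consists of $t_6w_0t_6^{-1}=\alpha_5^{-1}w_{n-1}\alpha_5$ together with $t_6w_it_6^{-1}=w_{i-1}$ for $i=1,\ldots,n-1$, and that the $w_i$ satisfy the explicit identities $w_i=t_5^{i+1}t_4^{i+1}t_3^{i+1}t_2^{i+1}w_0^{-1}\cdots w_{i-1}^{-1}$. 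The key observation is that the conjugation action of $\tau=ba_5^{-1}$ on the $t_j$, recorded in $\mathscr{G}$, lets one re-express each $w_i$ in terms of $w_0$ and the $t_j$: since $w_0=t_5t_4t_3t_2$ and $w_i$ is built from products of powers of $t_5,t_4,t_3,t_2$, one checks that conjugation by $\tau$ acts on $w_0$ in a controlled way. Concretely, I would first compute $\tau w_0\tau^{-1}$ using the formulas in $\mathscr{G}$: from $\tau t_j\tau^{-1}=t_5^{-1}t_4^{-1}\cdots t_{j+1}^{-1}t_jt_{j+1}\cdots t_4t_5$ one gets $\tau(t_5t_4t_3t_2)\tau^{-1}=t_5\,(t_5^{-1}t_4t_5)\,(t_5^{-1}t_4^{-1}t_3t_4t_5)\,(t_5^{-1}t_4^{-1}t_3^{-1}t_2t_3t_4t_5)$, which telescopes to $t_4t_3t_2t_5$, a conjugate of $w_0$. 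This is the crucial algebraic fact; the main obstacle will be bookkeeping this telescoping cleanly and matching it against the definition of the $w_i$.

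Granting that computation, the strategy runs as follows. First, using $\mathscr{G}$ alone, establish an identity of the form $\tau w_0\tau^{-1}=u^{-1}w_0 u$ for an explicit word $u$ in the $t_j$ (from the telescoping above, essentially $u=t_5$ up to the $\gamma_5$-twist). Then show by induction on $i$ that the relation $t_6w_it_6^{-1}=w_{i-1}$ (for $i\ge1$) and the relation $t_6w_0t_6^{-1}=\alpha_5^{-1}w_{n-1}\alpha_5$ are each equivalent, modulo the relations $[t_2,\lambda],[t_3,t_2^n\lambda],\ldots,[t_6,t_5^nt_4^nt_3^nt_2^n\lambda]$ and $\mathscr{G}$, to the single relation $[t_6^{-1}\tau,w_0]=1$. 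The idea is that $t_6^{-1}\tau$ commutes with each $a_i$ in the edge-path groupoid picture — it is the "purely vertical" loop around the barycentre circle $b^n$ relative to $a_5^n$ — so once it commutes with $w_0$ it commutes with every $w_i$, because each $w_i$ is obtained from $w_0$ by conjugating by powers of $a_5$ and the relation $[t_6,\gamma_6]$ (i.e. $[t_6,t_5^nt_4^nt_3^nt_2^n\lambda]=[t_6,\alpha_5]$) already pins down how $t_6$ interacts with $\alpha_5=a_5^n$. Propagating this through the recursion $w_i=\tau w_{i-1}\tau^{-1}\cdot(\text{correction in }t_j)$ recovers all of $\mathscr{F}_n$ from $[t_6^{-1}\tau,w_0]$ plus $\mathscr{G}$.

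For the converse direction — that $[t_6^{-1}\tau,w_0]$ holds in the original presentation — I would simply verify it in the edge-path groupoid: $t_6^{-1}\tau = (a_6a_5^{-1})^{-1}(ba_5^{-1}) = a_5a_6^{-1}ba_5^{-1}$, and since $b$ commutes with $a_5$ (they span an edge of $\Sigma$, as $b$ is the barycentre) and the relevant commutation relations hold, this word is conjugate to $ba_6^{-1}$ or similar, which visibly commutes with $w_0=a_5a_1^{-1}$ (again using that $b$ is adjacent to every boundary vertex of the $2$-simplex in the subdivided $\Sigma$). So the two presentations have the same relation consequences and hence the same quotient. The corollary $\defic(\Gamma_n)\le5$ is then immediate by counting: the new presentation has $7$ generators and $5+6+1=12$ relations independent of $n$, but I expect the intended count is $7$ generators against the $5+|\mathscr{G}|=5+6=11$ relations with one absorbed, giving $12-7=5$; in any case the number of relations minus generators is a fixed constant equal to $5$, so $\defic(\Gamma_n)\le 12-7=5$.
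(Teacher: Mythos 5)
Your argument follows essentially the same route as the paper: verify the single relation $[t_6^{-1}\tau,w_0]=1$ in the edge-path groupoid, show that the size-$n$ family $\mathscr{F}_n$ is recoverable from this relation together with $\mathscr{G}$ and the remaining relations (via the $t_6\mapsto\tau$ substitution and an induction), and then count $12-7=5$ in the resulting $n$-independent presentation. One small computational slip: the telescoped value of $\tau w_0\tau^{-1}$ is $t_2t_3t_4t_5$, not $t_4t_3t_2t_5$, but this does not affect the structure of the argument.
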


The proposition is a consequence of the following three lemmas.
\begin{lemma}\label{lem1}
  The relation
  \begin{equation*}
    [t_6^{-1}\tau,w_0]=1\tag{$*$}
  \end{equation*}
  holds in $\Gamma_n$.
\end{lemma}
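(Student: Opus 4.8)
The plan is to verify $(*)$ by a direct computation in the edge-path groupoid of the standard Eilenberg--Mac\,Lane space, where every symbol $t_i$, $\tau$, $w_0$ has a concrete meaning as a path built from the vertex-generators $a_1,\dots,a_6,b$. Recall that $\tau=ba_5^{-1}$ and $t_6=a_6a_5^{-1}$, so $t_6^{-1}\tau=a_5a_6^{-1}\cdot ba_5^{-1}$, and $w_0=a_5a_1^{-1}=t_5t_4t_3t_2$ as computed in \sect\ref{sec:specialcase}. The point is that $b$ is the barycentre of the $2$-simplex $\sigma$, so in $G_\Sigma$ the vertex $b$ commutes with every vertex in (the closed star of) $\sigma$; in particular $b$ commutes with $a_1$ and $a_5$, which are both boundary vertices of $\sigma$. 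Likewise $a_6$, being the barycentre of the edge $\{a_1,a_5\}$ (or whichever boundary edge it subdivides), commutes with $a_1$ and $a_5$ as well. These are the only commutation relations we will need.

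First I would rewrite $t_6^{-1}\tau$ as a path: $t_6^{-1}\tau = a_5 a_6^{-1} b a_5^{-1}$. Since $a_6$ commutes with $a_5$ and $b$ commutes with $a_5$, we may slide the outer $a_5^{\pm1}$ past to get $t_6^{-1}\tau = a_6^{-1} b$ in $G_\Sigma$ — that is, $t_6^{-1}\tau$ is (conjugate in the groupoid, hence equal as an element acting by conjugation to) the element $a_6^{-1}b$, which lies in the free abelian subgroup generated by $a_1$, $a_5$, $a_6$ and $b$ together with... more precisely, $a_6^{-1}b$ commutes with both $a_1$ and $a_5$. Next, $w_0 = a_5 a_1^{-1}$ lies in the subgroup generated by $a_1$ and $a_5$. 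Therefore $t_6^{-1}\tau$ and $w_0$ lie in a common abelian subgroup of $G_\Sigma$, so they commute; since $\Gamma_n$ is a subgroup of $G_\Sigma$ and both $t_6^{-1}\tau$ and $w_0$ represent elements of $\Gamma_n$ (each has exponent-sum divisible by $n$ — indeed exponent-sum $0$), the relation $[t_6^{-1}\tau, w_0]=1$ holds in $\Gamma_n$. That is $(*)$.

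The main obstacle is purely bookkeeping: one must be careful that the conjugating prefixes that appear when passing between the groupoid picture and the group $G_\Sigma$ do not affect the conclusion, and that the two elements genuinely land in a common abelian subgroup rather than merely commuting ``up to the boundary relations of $\sigma$.'' Concretely, one should check that $a_6^{-1}b$ commutes with $a_5 a_1^{-1}$ by exhibiting the commuting pairs $(b,a_1)$, $(b,a_5)$, $(a_6,a_1)$, $(a_6,a_5)$ explicitly from the flag structure of the subdivided $2$-simplex — these hold because $b$ is adjacent in $\Sigma$ to every vertex of $\sigma$ and $a_6$ is the midpoint of a boundary edge. Once those four commutations are in hand, the rest is immediate. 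It is worth remarking that this is exactly the kind of ``local'' cancellation advertised in the introduction to \sect\ref{sec:simplify}: the extra $2$-simplex supplies a relation ($\tau$ commuting with enough of the boundary) that makes $t_6^{-1}\tau$ centralize $w_0$, which is precisely the element whose iterated $\tau$-conjugates made up the size-$n$ family $\mathscr{F}_n$.
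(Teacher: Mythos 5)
Your proof is correct, but it takes a different route from the paper's. The paper stays inside the presentation for as long as possible: it computes $\tau w_0\tau^{-1}=t_2t_3t_4t_5$ using only the relations $\mathscr{G}$, and only then invokes the geometric interpretation (the edge-path groupoid of $Y_6$) to see that $t_6^{-1}t_2t_3t_4t_5t_6=a_5a_1^{-1}=w_0$. You instead bypass $\mathscr{G}$ entirely and verify $(*)$ directly in the ambient right-angled Artin group $G_{\Sigma}$: rewriting $t_6^{-1}\tau=a_5a_6^{-1}ba_5^{-1}=a_6^{-1}b$ and $w_0=a_5a_1^{-1}$, the identity reduces to the four commutations $[b,a_1]=[b,a_5]=[a_6,a_1]=[a_6,a_5]=1$, all of which hold because $b$ is the barycentre of the $2$-simplex and $a_6$ is adjacent to $a_1$ and $a_5$ in the hexagon. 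Since $\Gamma_n$ embeds in $G_{\Sigma}$ and both words represent elements of $\Gamma_n$, this is a legitimate verification of part (a) of what the subsequent Proposition needs (that $(*)$ holds in the group); the remaining lemmas, which must derive $\mathscr{F}_n$ \emph{formally} from the smaller relator set, are unaffected. Your argument is arguably cleaner for this particular lemma, while the paper's version has the expository advantage of displaying exactly which relations of $\mathscr{G}$ do the work, which is the theme of the whole section. One small slip: $t_6^{-1}\tau$ and $w_0$ do \emph{not} lie in a common abelian subgroup --- $a_1$ and $a_5$ are not adjacent in the hexagon, so $\langle a_1,a_5\rangle$ is free of rank $2$ --- but your own refinement (that $a_6^{-1}b$ centralizes each of $a_1$ and $a_5$ separately, hence centralizes $a_5a_1^{-1}$) is exactly what is needed and is correct.
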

\begin{proof}
  One has
  \begin{align*}
    t_6^{-1}\tau
    w_0\tau^{-1}t_6&=
    t_6^{-1}(t_5^{\tau})(t_4^{\tau})(t_3^{\tau})(t_2^{\tau})t_6\\
    &=
    t_6^{-1}t_5t_5^{-1}t_4t_5t_5^{-1}t_4^{-1}t_3t_4t_5t_5^{-1}t_4^{-1}t_3^{-1}t_2t_3t_4t_5t_6\\
    &= t_6^{-1}t_2t_3t_4t_5t_6\\
    &= a_5a_1^{-1}\\
    &= w_0.
  \end{align*}
\end{proof}

\begin{remark}
  We are using here the geometric interpretation of $\Gamma_n$ as
  $\pi_1(Y_6)$, and working in the edge-path groupoid of $Y_6$. We need to do
  this (and pick up the relation in the lemma) because the raw presentation
  does not tell us how $t_6$ acts on the lower $t_i$, although we know how
  $\tau$ acts on them.  This is, in a sense, the key point: $t_6$ acts on
  $T(a_1,a_5)$, whereas $\tau$ acts on the whole base $B$. Before, we had to
  spell out how $t_6$ acts on all the $w_i=w_i(\lambda,t_2,\ldots,t_5)$, but
  now we can make do with a single relation saying that its action is the same
  as the action of $\tau$: we know how $\tau$ acts on the $w_i$ because we
  know how it acts on all the $t_i$.
\end{remark}

\begin{lemma}
  Let $\mathscr{F}_n'$ be the family obtained from $\mathscr{F}_n$ by replacing
  all occurrences of $t_6$ with $\tau$. The relations $\mathscr{F}_n'$ follow
  from $\mathscr{G}$.
\end{lemma}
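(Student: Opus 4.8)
The plan is to show that each relation in $\mathscr{F}_n'$ — which asserts $\tau w_{i,0}\tau^{-1}=\gamma^{-1}w_{i,n-1}\gamma$ (suitably specialised to this case, where the single necklace strand gives words $w_0,\dots,w_{n-1}$ and $\gamma=\alpha_5$) and $\tau w_i\tau^{-1}=w_{i-1}$ for $i=1,\dots,n-1$ — is a consequence of the six relations $\mathscr{G}$. The key observation is that $\mathscr{G}$ tells us precisely how $\tau$ conjugates each of $t_2,t_3,t_4,t_5,t_6$ and $\lambda$; since every $w_i$ and $\alpha_5$ is, by the explicit formulas in \sect\ref{sec:specialcase}, a word in $t_2,\dots,t_5$ and $\lambda$, the relations $\mathscr{G}$ already determine $\tau w_i\tau^{-1}$ and $\tau\alpha_5\tau^{-1}$ as specific words in the generators. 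So the task is purely to check that these determined values agree with what $\mathscr{F}_n'$ demands.

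The cleanest way to carry this out is to work in the free group on $\lambda,t_2,\dots,t_6,\tau$ modulo $\mathscr{G}$, define $\phi$ to be conjugation by $\tau$, and compute $\phi$ on the $a_i$-letters directly. From $\mathscr{G}$ one reads off $\tau a_i\tau^{-1}$ in the edge-path groupoid for each $i$ (this is really just the geometric statement that $\tau=ba_5^{-1}$ and $b$ commutes with everything up to the shift): concretely $\phi(a_j)=a_{j+1}a_5^{-1}\cdot a_5 \cdot\ldots$ patterns of the kind already seen in the computation $\tau t_4\tau^{-1}=t_5^{-1}t_4t_5$. First I would record the upshot: modulo $\mathscr{G}$, conjugation by $\tau$ has the same effect on the subgroup generated by $a_1,\dots,a_5$ as conjugation by $a_5$ (this is essentially what the commutator relations $[\tau,t_5]$, $[\tau,t_6]$ and the three conjugation relations in $\mathscr{G}$ encode, once one remembers $t_i=a_ia_{i-1}^{-1}$). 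Granting this, $\phi(w_i)=a_5 w_i a_5^{-1}$, and the identities $a_5 w_i a_5^{-1}=w_{i-1}$ for $i\ge1$ and $a_5 w_0 a_5^{-1}=\alpha_5^{-1}w_{n-1}\alpha_5$ are exactly the defining relations among the $w_i$ that were verified geometrically in \sect\ref{sec:specialcase} (they hold already in $\pi_1(B)$, hence certainly in $\Gamma_n$, but more to the point they are formal consequences of the definitions of $w_0,\dots,w_{n-1}$ as words in the $t_i,\lambda$). Running this substitution for each $i$ produces precisely the relations $\mathscr{F}_n'$.

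I would organise the proof as two short steps: (i) prove the claim that $\tau$ and $a_5$ induce the same automorphism of $\langle a_1,\dots,a_5\rangle$ modulo $\mathscr{G}$ — this is a finite check on five generators using the five non-trivial relations of $\mathscr{G}$ (the relation $\tau\lambda\tau^{-1}=t_6\lambda t_6^{-1}$ being used only to handle $\alpha_5$, which involves $\lambda$); (ii) substitute into the definitions of $w_0,\dots,w_{n-1},\alpha_5$ and observe the output is $\mathscr{F}_n'$, invoking the telescoping identities $\theta^{j+1}(\cdot)=a_5\theta^j(\cdot)a_5^{-1}\cdot(\text{earlier }w\text{'s})$ that are built into how the $w_i$ were defined.

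The main obstacle is step (i): one has to be careful with the edge-path/groupoid bookkeeping, since $\mathscr{G}$ as written records the action of $\tau$ on the $t_i$ (not the $a_i$), and passing to the $a_i$ requires using $t_i=a_ia_{i-1}^{-1}$ together with the group relations already present in $\mathscr{P}_5$ — in particular one must confirm that no extra relations beyond $\mathscr{G}$ (and the relations carried over in the presentation) are needed to rewrite $\tau a_j\tau^{-1}$ in the form $a_5(\text{word in }a_i)a_5^{-1}$. Once that rewriting is justified, the remainder is the routine telescoping calculation already modelled in Lemma~\ref{lem1} and in \sect\ref{sec:specialcase}, so the argument is short.
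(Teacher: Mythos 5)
Your strategy is the same as the paper's: $\mathscr{G}$ prescribes $\tau t_i\tau^{-1}$ for every generator occurring in the words $w_0,\dots,w_{n-1},\alpha_5$, so one substitutes and checks that the result telescopes to what $\mathscr{F}_n'$ demands. The paper does this word-by-word as a free-group cancellation (its sample computation $\tau w_1\tau^{-1}=w_0$ uses nothing but $\mathscr{G}$ and free reduction); your packaging of the telescoping as "conjugation by $\tau$ agrees with conjugation by a power of $a_5$ on $\langle a_1,\dots,a_5\rangle$" is a slightly more conceptual version of the identical computation, and your worry about passing between the $t_i$ and the $a_i$ in the edge-path groupoid is legitimate but harmless (the relations of $\mathscr{P}_5$ are available, and for $i\ge1$ one does not even need them).

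There is, however, a concrete error you must fix: the direction of the conjugation is inverted throughout your step (i) and (ii). Since $\tau=ba_5^{-1}$ and $b$ commutes with $a_1,\dots,a_5$, conjugation by $\tau$ acts on $\pi_1(B)$ as $w\mapsto a_5^{-1}wa_5$, i.e.\ as conjugation by $a_5^{-1}$ — this is consistent with the sample relation $\tau t_4\tau^{-1}=t_5^{-1}t_4t_5=a_5^{-1}t_4a_5$ and with the fact that $\mathscr{F}_n'$ sends $w_i\mapsto w_{i-1}$. Your displayed identities $a_5w_ia_5^{-1}=w_{i-1}$ and $a_5w_0a_5^{-1}=\alpha_5^{-1}w_{n-1}\alpha_5$ are false as written: by the definitions $w_i=a_5^iw_0a_5^{-i}$ one has $a_5w_ia_5^{-1}=w_{i+1}$, while the correct statements are $a_5^{-1}w_ia_5=w_{i-1}$ for $i\ge1$ and $a_5^{-1}w_0a_5=\alpha_5^{-1}w_{n-1}\alpha_5$. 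Because the inversion is consistent, replacing "conjugation by $a_5$" with "conjugation by $a_5^{-1}$" everywhere repairs the argument, after which it coincides with the paper's routine induction.
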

\begin{proof}
  This is a routine inductive calculation. For example,
  \begin{align*}
    \tau w_1\tau^{-1}&= \tau
    t_5^2t_4^2t_3^2t_2t_3^{-1}t_4^{-1}t_5^{-1}\tau^{-1}\\
    &=t_5^2t_5^{-1}t_4^2t_5t_5^{-1}t_4^{-1}t_3^2t_4t_5t_5^{-1}t_4^{-1}t_3^{-1}t_2^2t_3t_4t_5t_5^{-1}t_4^{-1}t_3^{-1}t_2^{-1}t_3t_4t_5\\ 
    &\hphantom{==}t_5^{-1}t_4^{-1}t_3^{-1}t_4t_5t_5^{-1}t_4^{-1}t_5t_5^{-1}\quad\text{
    (making }\tau\text{ act throughout)}\\
    &=t_5t_4t_3t_2\\
    &=w_0.
  \end{align*}
\end{proof}

\begin{lemma}\label{lem3}
  The relations $\mathscr{F}_n$ follow from $\mathscr{F}_n'$, the relation
  $(*)$, and $[t_6,\alpha_5]$.
\end{lemma}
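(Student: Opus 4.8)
The plan is to show that the full size-$n$ family $\mathscr{F}_n$ can be recovered from $\mathscr{F}_n'$ together with the single relation $(*)$ and the commutator $[t_6,\alpha_5]$. Recall that $\mathscr{F}_n'$ is obtained from $\mathscr{F}_n$ by systematically replacing every occurrence of $t_6$ by $\tau$, so morally the two families differ only by the substitution $t_6\leftrightarrow\tau$. The key observation to exploit is that $(*)$ says $[t_6^{-1}\tau,w_0]=1$, i.e. conjugation by $t_6^{-1}\tau$ fixes $w_0$, and hence (since $t_6^{-1}\tau$ is a single element) conjugation by $t_6$ and by $\tau$ have the same effect on $w_0$ provided we also know they agree on the data needed to propagate this to the other $w_i$. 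Concretely, the relations $\mathscr{F}_n$ and $\mathscr{F}_n'$ both express the cyclic shift $w_{i}\mapsto w_{i-1}$ (for $i=1,\dots,n-1$) and the wrap-around $w_0\mapsto\alpha_5^{-1}w_{n-1}\alpha_5$, the only difference being whether the conjugating letter is written $t_6$ or $\tau$.

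First I would record the two facts that make the conjugating letters interchangeable on the relevant subgroup. From $(*)$ we get $t_6 w_0 t_6^{-1}=\tau w_0\tau^{-1}$ directly. Next I would show, using $\mathscr{F}_n'$, that $t_6$ and $\tau$ also agree on each $w_i$: since $\mathscr{F}_n'$ gives $\tau w_i\tau^{-1}=w_{i-1}$ for $i\ge1$ and $\tau w_0\tau^{-1}=\alpha_5^{-1}w_{n-1}\alpha_5$, and since each $w_i$ is expressible as an iterated $\tau$-conjugate of $w_0$, one can push the equality $t_6 w_0 t_6^{-1}=\tau w_0\tau^{-1}$ up the chain — but this step needs care, because to move from "$t_6$ and $\tau$ agree on $w_0$" to "they agree on $w_1=a_5 w_0 a_5^{-1}$" one must know that $t_6$ and $\tau$ agree on $\alpha_5$ (equivalently on $a_5^n$, modulo the relations already present). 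That is exactly where $[t_6,\alpha_5]$ enters: it tells us $t_6$ centralizes $\alpha_5$, and one already has $[\tau,t_5]$ and $[\tau,t_6]$ in $\mathscr{G}$, from which $\tau$ also centralizes $\alpha_5=t_5^n t_4^n t_3^n t_2^n\lambda$ once one checks $\tau$ centralizes the whole word — actually the cleaner route is to note $\alpha_5=b^{-1}\cdot b\alpha_5 b^{-1}\cdot b$ is fixed by $\tau$-conjugation because $\tau=ba_5^{-1}$ and $a_5$ centralizes $\alpha_5=a_5^n$, so $\tau\alpha_5\tau^{-1}=\alpha_5$ holds and in fact follows from $\mathscr{G}$.

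With those two equalities in hand — $t_6$ and $\tau$ act identically on every $w_i$ and on $\alpha_5$ — the derivation is mechanical: substitute $\tau\mapsto t_6$ throughout $\mathscr{F}_n'$ using these identities to obtain precisely the relations $\mathscr{F}_n$. Conversely (for the "only if" direction needed to justify that the replacement is an honest Tietze move) the same identities let one substitute $t_6\mapsto\tau$ to recover $\mathscr{F}_n'$ from $\mathscr{F}_n$ and $(*)$, so the two relation sets are interderivable over $\mathscr{G}\cup\{[t_6,\alpha_5]\}$. The main obstacle I anticipate is the inductive step propagating $t_6 w_i t_6^{-1}=\tau w_i\tau^{-1}$ from $i$ to $i+1$: one has to be scrupulous about which relations are available at that point (only $\mathscr{F}_n'$, $(*)$, $[t_6,\alpha_5]$, and $\mathscr{G}$, not $\mathscr{F}_n$ itself), and about the wrap-around term $\alpha_5^{-1}w_{n-1}\alpha_5$ where the $\alpha_5$-centrality of both $t_6$ and $\tau$ is genuinely used rather than merely convenient. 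Everything else is routine word manipulation in the edge-path groupoid of $Y_6$ of the kind already illustrated in Lemma~\ref{lem1} and the preceding lemma.
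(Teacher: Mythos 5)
Your overall strategy is the paper's: show that conjugation by $t_6$ and by $\tau$ agree on all the $w_i$ by induction along the chain, using $\mathscr{F}_n'$ to relate consecutive $w_i$ and the available commutators to move $t_6$ past the conjugating letters, then read off $\mathscr{F}_n$. That works, and the paper's proof is exactly such an induction (run downward: $w_0$, then $w_{n-1}, w_{n-2},\ldots,w_1$). One step of your sketch, however, would not survive being written out: you propose to pass from $w_0$ to $w_1$ via $w_1=a_5w_0a_5^{-1}$ and claim the needed input there is that $t_6$ and $\tau$ agree on $\alpha_5$. Conjugation by a single $a_5$ is not an operation in $\Gamma_n$ (only $a_5^n$ survives), so this identity is not usable as a relation in the derivation, and agreement on $\alpha_5=a_5^n$ does not substitute for it. The correct propagation is the one you also mention in passing: $w_i=\tau^{-1}w_{i-1}\tau$ from $\mathscr{F}_n'$, so $t_6w_it_6^{-1}=\tau^{-1}(t_6w_{i-1}t_6^{-1})\tau$ using only $[t_6,\tau]\in\mathscr{G}$, and the induction closes with $\mathscr{F}_n'$ again. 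The commutator $[t_6,\alpha_5]$ is not needed at the step $w_0\to w_1$; it enters only at the wrap-around, where $w_{n-1}=\alpha_5\,\tau w_0\tau^{-1}\alpha_5^{-1}$ and one must move $t_6$ past the $\alpha_5$-conjugation (this is where the paper's downward induction uses it). With that correction your argument is the paper's argument.
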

\begin{proof}
  Again, this is no more than a simple combinatorial calculation. That $t_6$
  acts as desired on $w_0$ is immediate from $(*)$, and one can then prove
  the same for $w_{n-k}$ by induction on $k$: thus,
  \begin{align*}
    t_6w_{n-1}t_6^{-1}&= t_6\alpha_5\tau w_0\tau^{-1}\alpha_5^{-1}t_6^{-1} &
    \text{using }\mathscr{F}_n'\\
    &= \tau\alpha_5t_6w_0t_6^{-1}\alpha_5^{-1}\tau^{-1} & \text{since
    }t_6\text{ commutes with }\alpha_5\text{ and }\tau\\
    &=\tau w_{n-1}\tau^{-1} & \text{by the inductive hypothesis}\\
    &=w_{n-2} & \text{using }\mathscr{F}_n'\text{ again}
  \end{align*}
  and similarly for $w_{n-2},\ldots,w_1$, except that $\alpha_5$ no longer
  appears.
\end{proof}

\subsection{A local argument: completing a $2$-simplex with two `missing
edges'}\label{sec:completetwoedge}
We now show how two of our size~$n$ families of relations can be consolidated
into a single family in the presence of a $2$-simplex.

Let us consider, then, the complex $\Sigma$ shown in
Figure~\ref{fig:twotriangles}.
\begin{figure}[tb]
  \centering
  \includegraphics{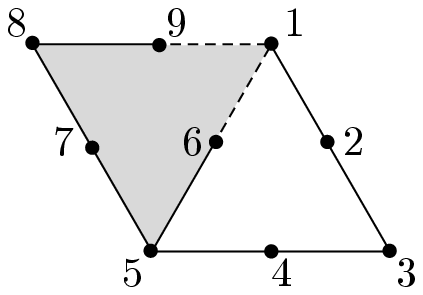}
  \caption{The complex $\Sigma$ in~\sect\ref{sec:completetwoedge}. We will
  fill in the left triangle with a (subdivided) $2$-simplex.}
  \label{fig:twotriangles}
\end{figure}
We order the vertices as indicated in the figure. Following through our
procedure once again, we obtain a presentation $\mathscr{P}_9$ for $Y_9$. We
are going to modify this presentation slightly by performing a Tietze move: we
shall introduce a new generator $\theta$, which we set equal to the word
$t_6t_5t_4t_3t_2$ in the other generators. (In the edge-path groupoid,
$\theta$ is equal to $a_6a_1^{-1}$.) The resulting presentation is
\begin{align*}
  \mathscr{P}=\Pres{\lambda,t_2&,t_3,t_4,t_5,t_6,t_7,t_8,t_9,\theta}{\theta=t_6t_5t_4t_3t_2,\\
  &[t_2,\lambda],[t_3,t_2^n\lambda],[t_4,t_3^nt_2^n\lambda],[t_5,t_4^nt_3^nt_2^n\lambda],[t_6,t_5^nt_4^nt_3^nt_2^n\lambda],\mathscr{F}_n,\\
  &[t_7,t_5^nt_4^nt_3^nt_2^n],[t_8,t_7^nt_5^nt_4^nt_3^nt_2^n\lambda],[t_9,t_8^nt_7^nt_5^nt_4^nt_3^nt_2^n\lambda],\mathscr{H}_n},
\end{align*}
where if
\begin{equation*}
  \begin{aligned}
    w_0&=t_5t_4t_3t_2\\
    w_1&=t_5^2t_4^2t_3^2t_2^2w_0^{-1}\\
    w_2&=t_5^3t_4^3t_3^3t_2^3w_0^{-1}w_1^{-1}\\
    &\;\;\vdots\\
    w_{n-1}&=t_5^nt_4^nt_3^nt_2^nw_0^{-1}w_1^{-1}\cdots
    w_{n-2}^{-1}\\
    \alpha&=t_5^nt_4^nt_3^nt_2^n\lambda
  \end{aligned}
\end{equation*}
and
\begin{equation*}
  \begin{aligned}
    u_0&=t_8t_7t_5t_4t_3t_2\\
    u_1&=t_8^2t_7^2t_5^2t_4^2t_3^2t_2^2u_0^{-1}\\
    &\;\;\vdots\\
    u_{n-1}&=t_8^nt_7^nt_5^nt_4^nt_3^nt_2^nu_0^{-1}u_1^{-1}\cdots
    u_{n-2}^{-1}\\
    \beta&=t_8^nt_7^nt_5^nt_4^nt_3^nt_2^n\lambda
  \end{aligned}
\end{equation*}
then
\[
\mathscr{F}_n=
\begin{cases}
  t_6w_0t_6^{-1}=\alpha^{-1}w_{n-1}\alpha\\
  t_6w_it_6^{-1}=w_{i-1} & (i=1,\ldots,n-1).
\end{cases}
\]
and
\[
\mathscr{H}_n=
\begin{cases}
  t_9u_0t_9^{-1}=\beta^{-1}u_{n-1}\beta\\
  t_9u_it_9^{-1}=u_{i-1} & (i=1,\ldots,n-1).
\end{cases}
\]
(cf.~\sect\ref{sec:completesimplex}).

We now complete $\Sigma$ by filling in the left-hand triangle with a
(subdivided) $2$-simplex. As before, we obtain a $3$-dimensional
$\KXone{\Gamma_n}$ by forming the mapping torus of $Y_9$ by the degree-$1$
shift map on the $a_i^n$ for $i=1,5,6,7,8,9$. A set of $\pi_1$~generators for
the base of this mapping torus is given by
\begin{equation*}
  \begin{aligned}
    a_9a_8^{-1}&=t_9\\
    a_8a_7^{-1}&=t_8\\
    a_7a_5^{-1}&=t_7\\
    a_6a_5^{-1}&=t_6\\
    a_6a_1^{-1}&=\theta\\
    a_1^n&=\lambda,
  \end{aligned}
\end{equation*}
and by the Seifert--van~Kampen theorem we obtain a presentation for $\Gamma_n$
by adding to our presentation a stable letter $\tau$, and a collection of
relations describing the action of $\tau$ on these generators. One checks
easily that these relations are
\[
\mathscr{G}=
\begin{cases}
  [\tau,t_9],\\
  [\tau,t_8],\\
  \tau t_7\tau^{-1}=t_8^{-1}t_7t_8,\\
  \tau t_6\tau^{-1}=t_8^{-1}t_7^{-1}t_6t_7t_8,\\
  \tau\theta\tau^{-1}=t_8^{-1}t_7^{-1}t_6\theta t_6^{-1}t_7t_8,\\
  \tau\lambda\tau^{-1}=t_8^{-1}t_7^{-1}t_6\lambda t_6^{-1}t_7t_8.
\end{cases}
\]

\begin{lemma}
  The relation
  \begin{equation*}
    [t_9^{-1}\tau,u_0]=1\tag{$*$}
  \end{equation*}
  holds in $\Gamma_n$.
\end{lemma}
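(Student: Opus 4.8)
The plan is to mimic the proof of Lemma~\ref{lem1}. Since $[t_9^{-1}\tau,u_0]=1$ is equivalent to $t_9^{-1}(\tau u_0\tau^{-1})t_9=u_0$, I would first compute $\tau u_0\tau^{-1}$ from the relations $\mathscr{G}$ and then identify the answer using the geometry of $Y_9$.

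For the first part, the obstruction is that $\mathscr{G}$ describes the action of $\tau$ on $t_9,t_8,t_7,t_6,\theta$ and $\lambda$, but not on $t_5,t_4,t_3,t_2$. The way around this is the Tietze relation $\theta=t_6t_5t_4t_3t_2$, which gives $t_5t_4t_3t_2=t_6^{-1}\theta$ and hence $u_0=t_8t_7t_6^{-1}\theta$, a word in generators that $\tau$ does move. Conjugating by $\tau$ and substituting the formulas from $\mathscr{G}$, a routine telescoping cancellation collapses the product to
\[
\tau u_0\tau^{-1}=\theta\,t_6^{-1}t_7t_8 .
\]

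It then remains to check that $t_9^{-1}(\theta\,t_6^{-1}t_7t_8)\,t_9=u_0$, and this is the step I expect to be the crux. Exactly as in the remark following Lemma~\ref{lem1}, the stated presentation does not record how $t_9$ conjugates the lower generators, so this identity cannot be obtained by a formal manipulation of $\mathscr{G}$; instead I would pass to the edge-path groupoid of $Y_9$ — equivalently, work inside $\Gamma_n\le G_{\Sigma_1}$ — using the concrete values $\theta=a_6a_1^{-1}$, $t_6=a_6a_5^{-1}$, $t_7=a_7a_5^{-1}$, $t_8=a_8a_7^{-1}$, $t_9=a_9a_8^{-1}$ and $u_0=a_8a_1^{-1}$. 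The commutations $[a_1,a_6]$, $[a_6,a_5]$, $[a_5,a_7]$, $[a_7,a_8]$ coming from the hexagonal cycle bounding the region being filled telescope $\theta\,t_6^{-1}t_7t_8$ down to $a_1^{-1}a_8$, and then $[a_1,a_9]$ and $[a_8,a_9]$ give $t_9^{-1}(a_1^{-1}a_8)\,t_9=a_8a_1^{-1}=u_0$, as required.

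Once that geometric input is admitted, everything is a short calculation with the commutation relations of $G_{\Sigma_1}$; the only real subtlety — and the reason $(*)$ has to be recorded as a relation here at all — is precisely the need to leave the raw presentation and invoke the realisation of the generators as loops in $Y_9$, as emphasised after Lemma~\ref{lem1}.
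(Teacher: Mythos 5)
Your proposal is correct and is exactly the argument the paper intends: its proof of this lemma is simply ``entirely analogous to Lemma~\ref{lem1}'', and you have carried out that analogy faithfully --- rewriting $u_0=t_8t_7t_6^{-1}\theta$ via the Tietze relation so that $\mathscr{G}$ applies, telescoping to get $\tau u_0\tau^{-1}=\theta t_6^{-1}t_7t_8$, and then passing to the edge-path groupoid of $Y_9$ to conjugate by $t_9$. Your closing remark about why one must leave the raw presentation at the last step matches the paper's own remark following Lemma~\ref{lem1}.
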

\begin{proof} The proof is entirely analogous to that
  of Lemma~\ref{lem1}.
\end{proof}

\begin{lemma}
  Let $\mathscr{H}'_n$ be the family of words obtained from $\mathscr{H}_n$ be
  replacing all occurrences of $t_9$ with $\tau$. The relations
  $\mathscr{H}'_n$ follow from $\mathscr{F}_n$ and $\mathscr{G}$.
\end{lemma}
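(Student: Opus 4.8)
The plan is to follow the two-step pattern of \sect\ref{sec:completesimplex} — first rewrite each word $u_j$ so that it involves only generators on which $\tau$ is known to act, then verify $\mathscr{H}'_n$ by a direct free-group calculation — but with one essential difference from the lemma ``$\mathscr{F}'_n$ follows from $\mathscr{G}$'' there. The present $\mathscr{G}$ does not record the $\tau$-action on $t_2,\dots,t_5$ individually, only on $t_6,t_7,t_8,\theta,\lambda$, so this time the family $\mathscr{F}_n$ really is needed: it is used precisely to absorb the subword $w_j=w_j(t_2,\dots,t_5)$ that sits inside each $u_j$.

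For the first step, the relations $t_6w_jt_6^{-1}=w_{j-1}$ of $\mathscr{F}_n$ give $w_j=t_6^{-j}w_0t_6^{j}$ for $0\le j\le n-1$ by induction, and since $\theta=t_6t_5t_4t_3t_2$ is our Tietze generator we have $w_0=t_6^{-1}\theta$, so $w_j=t_6^{-(j+1)}\,\theta\,t_6^{j}$. Independently, the telescoping in the definitions collapses the $u_j$: using $t_5^{j+1}t_4^{j+1}t_3^{j+1}t_2^{j+1}=w_jw_{j-1}\cdots w_0$ and the parallel induction $u_0^{-1}\cdots u_{j-1}^{-1}=w_0^{-1}\cdots w_{j-1}^{-1}t_7^{-j}t_8^{-j}$ one obtains $u_j=t_8^{j+1}t_7^{j+1}w_jt_7^{-j}t_8^{-j}$. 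Combining the two,
\[
u_j=t_8^{j+1}t_7^{j+1}t_6^{-(j+1)}\,\theta\,t_6^{j}t_7^{-j}t_8^{-j},
\]
a word in $t_6,t_7,t_8,\theta$ alone.

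Now $\mathscr{G}$ records exactly how $\tau$ conjugates $t_6,t_7,t_8$ and $\theta$, so $\tau u_j\tau^{-1}$ can be computed term by term; after the cancellations this equals $t_8^{j}t_7^{j}t_6^{-j}\,\theta\,t_6^{j-1}t_7^{1-j}t_8^{1-j}$, which for $1\le j\le n-1$ is literally $u_{j-1}$. That establishes all of $\mathscr{H}'_n$ except the wrap-around relation. For $j=0$ the same computation gives $\tau u_0\tau^{-1}=\theta t_6^{-1}t_7t_8$, so it remains to check that $\beta^{-1}u_{n-1}\beta$ equals this.

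I expect that last identity to be the only real obstacle, since it is where the length-$n$ loops $\alpha$ and $\beta$ enter. I would first extract the auxiliary relation $\alpha^{-1}\theta\alpha=t_6^{n}\theta t_6^{-n}$ from the wrap-around relation $t_6w_0t_6^{-1}=\alpha^{-1}w_{n-1}\alpha$ of $\mathscr{F}_n$ together with $[t_6,\alpha]=1$ and $w_0=t_6^{-1}\theta$. Then, writing $\beta=t_8^nt_7^n\alpha$ and using the standing relations $[t_7,\alpha]$ and $[t_8,t_7^n\alpha]$ of $\mathscr{P}$ to commute $\alpha$ and the outer powers of $t_7$ and $t_8$ through, one reduces $\beta^{-1}u_{n-1}\beta$ first to $\alpha^{-1}t_6^{-n}\theta t_6^{n-1}t_7\alpha t_8$ and then, via the auxiliary relation and $[t_6,\alpha]$, to $\theta t_6^{-1}t_7t_8=\tau u_0\tau^{-1}$. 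Everything apart from this bookkeeping is routine manipulation of the kind displayed in \sect\ref{sec:completesimplex}.
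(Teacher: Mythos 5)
Your proposal is correct and follows essentially the same route as the paper: rewrite $u_j=t_8^{j+1}t_7^{j+1}w_jt_7^{-j}t_8^{-j}$ with $w_j=t_6^{-(j+1)}\theta t_6^{j}$ via $\mathscr{F}_n$ and the Tietze relation, then push $\tau$ through using $\mathscr{G}$ (the paper only displays the sample case $u_1$ and calls the rest a straightforward induction). Your explicit treatment of the wrap-around relation, extracting $\alpha^{-1}\theta\alpha=t_6^{n}\theta t_6^{-n}$ from the first relation of $\mathscr{F}_n$ and the commutators in $\mathscr{P}$, fills in the one step the paper leaves implicit.
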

\begin{proof}
  This is another straightforward induction: we present a sample
  calculation. Observe that $u_1=t_8^2t_7^2w_1t_7^{-1}t_8^{-1}$, and
  $w_1=t_6^{-1}w_0t_6=t_6^{-2}\theta t_6$. Thus
  \[
  \begin{aligned}
    \tau
    u_1\tau^{-1}&=(t_8^{\tau})^2(t_7^{\tau})^2(t_6^{\tau})^{-2}\theta^{\tau}t_6^{\tau}(t_7^{\tau})^{-1}(t_8^{\tau})^{-1}\\
    &=t_8^2t_8^{-1}t_7^2t_8t_8^{-1}t_7^{-1}t_6^{-2}t_7t_8t_8^{-1}t_7^{-1}t_6\theta
    t_6^{-1}t_7t_8t_8^{-1}t_7^{-1}t_6t_7t_8t_8^{-1}t_7^{-1}t_8^{-1}\\
    &=t_8t_7t_6^{-1}\theta\\
    &=u_0.
  \end{aligned}
  \]
\end{proof}

\begin{proposition}
  The presentation obtained from $\mathscr{P}$ by replacing
  $\mathscr{H}_n$ by the relation $(*)$ is again a presentation of $\Gamma_n$.
\end{proposition}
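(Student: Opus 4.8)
The plan is to mirror the structure of \sect\ref{sec:completesimplex}: show that the old size-$n$ family $\mathscr{H}_n$ and the new single relation $(*)$ are interchangeable modulo the rest of $\mathscr{P}$ together with the new relations $\mathscr{G}$. This is exactly a pair of Tietze transformations, so the proof breaks into two directions. First, one must check that $(*)$ holds in $\Gamma_n$; this is already the content of the lemma preceding the proposition, proved by the same edge-path groupoid computation as Lemma~\ref{lem1}. So that direction reduces to observing that adding a consequence of the existing relations changes nothing.

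The substantive direction is the converse: assuming the presentation with $\mathscr{H}_n$ deleted and $(*)$ adjoined, one must recover all the relations in $\mathscr{H}_n$. Here I would follow the template of Lemmas~\ref{lem1}--\ref{lem3} in the one-missing-edge case, adapted to the asymmetry that $t_9$ (unlike $\tau$) does not visibly act on the lower generators. The key auxiliary fact is the previous lemma, which says that $\mathscr{H}'_n$ — the family $\mathscr{H}_n$ with every $t_9$ replaced by $\tau$ — follows from $\mathscr{F}_n$ and $\mathscr{G}$; note that $\mathscr{F}_n$ is still present in the new presentation (we only deleted $\mathscr{H}_n$), so $\mathscr{H}'_n$ is available. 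Then I would argue, exactly as in Lemma~\ref{lem3}, that $\mathscr{H}_n$ follows from $\mathscr{H}'_n$, the relation $(*)$, and $[t_9,\beta]$: the relation $(*)$ gives the correct action of $t_9$ on $u_0$ directly, and one bootstraps to $u_{n-1},u_{n-2},\ldots,u_1$ by downward induction, using that $t_9$ commutes with $\beta$ and with $\tau$ to shuttle $t_9$ past the $\tau$-conjugations supplied by $\mathscr{H}'_n$. The relation $[t_9,\beta]$ is present as the instance $[t_9,t_8^nt_7^nt_5^nt_4^nt_3^nt_2^n\lambda]$ in $\mathscr{P}$, and $[t_9,\tau]$ follows from $[\tau,t_9]\in\mathscr{G}$.

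Assembling these: the new presentation has $\mathscr{P}$'s generators plus $\tau$; its relations are those of $\mathscr{P}$ with $\mathscr{H}_n$ deleted, plus $\mathscr{G}$, plus $(*)$. By the lemma before the proposition, $(*)$ is a consequence of the full set of relations of $\mathscr{P}$ together with $\mathscr{G}$, so adjoining $\tau$, $\mathscr{G}$ and $(*)$ to $\mathscr{P}$ yields a presentation of $\Gamma_n$ (this is just the Seifert--van~Kampen computation of the mapping-torus $\pi_1$ performed in the text). Conversely, by the argument of the previous paragraph, $\mathscr{H}_n$ is a consequence of the relations that remain (using $\mathscr{F}_n$, $\mathscr{G}$, $(*)$ and the already-present $[t_9,\beta]$), so deleting $\mathscr{H}_n$ does not change the group. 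Hence the displayed presentation presents $\Gamma_n$.

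\textbf{Main obstacle.} The one point needing genuine care is the downward induction recovering $\mathscr{H}_n$ from $\mathscr{H}'_n$, $(*)$ and $[t_9,\beta]$: one has to track precisely where the conjugating $\beta$-factor appears (it occurs in the $u_0\leftrightarrow u_{n-1}$ relation but not in the others, just as $\alpha_5$ appears only once in $\mathscr{F}_n$), and verify that $t_9$ can be commuted past both $\beta$ and $\tau$ at each step. This is the analogue of Lemma~\ref{lem3} and is a routine but slightly fiddly combinatorial computation; everything else is bookkeeping with Tietze moves.
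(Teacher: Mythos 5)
Your proposal is correct and follows essentially the same route as the paper: invoke the two preceding lemmas (that $(*)$ holds in $\Gamma_n$ and that $\mathscr{H}'_n$ follows from $\mathscr{F}_n$ and $\mathscr{G}$), then recover $\mathscr{H}_n$ from $\mathscr{H}'_n$, $(*)$ and the remaining relations by a downward induction exactly as in Lemma~\ref{lem3}. Your explicit identification of the auxiliary relations needed for that induction, namely $[t_9,\beta]$ (already present in $\mathscr{P}$) and $[\tau,t_9]\in\mathscr{G}$, is a helpful piece of bookkeeping that the paper leaves implicit.
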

\begin{proof}
  Given the previous two lemmas, it suffices to show that the relations
  $\mathscr{H}_n$ follow from $\mathscr{H}'_n$, the relation $(*)$, and the
  other relations in $\mathscr{P}$. Just as in Lemma~\ref{lem3}, this can be
  proved by a straightforward induction.
\end{proof}

\subsection{A scheme for simplifying presentations for a general
$2$-complex}\label{sec:generalsimplify}
Now consider a general $\Sigma$, obtained as before by barycentrically
subdividing another complex $\Delta$.  Recall that we have a presentation of
$\Gamma_n\subset G_{\Sigma}$ with a size~$n$ family of relations for each edge
in the complement of a maximal tree $T$ in $\Sigma_1\subset\Sigma$, and for
each $2$-simplex $\sigma$ in $\Delta$ we have a stable letter $\tau_{\sigma}$
and six relations describing its action on a generating set for the
fundamental group of the base of the corresponding mapping torus.

The local arguments of \sect\ref{sec:completesimplex} and
\sect\ref{sec:completetwoedge} yield the following procedure.

\begin{proposition}\label{prop:procedure}
  One can eliminate (by which we mean replace by a single relation of the form
  $[\tau_{\sigma}t_i,w_0]=1$) the family of relations corresponding to an edge
  $e$ in $\Sigma_1-T$ if $\Delta$ contains a $2$-simplex the boundary of whose
  image in $\Sigma$ is contained in $T\cup e$, or in $T\cup e\cup f$ (in this
  case, we keep the family of relations corresponding to $f\subset
  \Sigma_1-T$).
\end{proposition}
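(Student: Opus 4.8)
The plan is to abstract the two worked examples in \sect\ref{sec:completesimplex} and \sect\ref{sec:completetwoedge} into a single statement, and then observe that both are instances of the same three-lemma scheme. First I would set up the general situation: we have built a presentation $\mathscr{P}_{\Sigma_1}$ of $\Gamma_n\subset G_{\Sigma_1}$ from the maximal tree $T$ in $\Sigma_1$, with one size-$n$ family $\mathscr{F}_{n,e}$ of relations for each edge $e\in\Sigma_1-T$; each such family describes the action of the stable letter $t_e$ (the one attached to the edge $e$) on a chosen free basis $w_0,w_1,\dots,w_{n-1},\gamma$ for $\pi_1$ of the necklace base space $B_e$, exactly as in \sect\ref{sec:generalcase}. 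Adjoining the $2$-simplex $\sigma$ of $\Delta$ whose boundary lies in $T\cup e$ (resp. $T\cup e\cup f$) contributes a stable letter $\tau=\tau_\sigma$ with the six relations $\mathscr{G}$ describing the action of $\tau$ on a generating set for $\pi_1$ of the base torus-union of the new mapping torus; the key geometric fact is that $\tau$ acts on the \emph{whole} of $B_e$ (it shifts every coordinate circle touching $\sigma$), whereas $t_e$ only acts on the sub-torus corresponding to $e$ itself.

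The core of the argument is then the three-lemma chain, carried out once in general. Lemma~A: the relation $(*)$ given by $[t_e^{-1}\tau,w_0]=1$ holds in $\Gamma_n$; this is proved by working in the edge-path groupoid of the cover, writing $w_0=\theta^1(v_k,v_{\beta})$ in terms of the $t_i$, conjugating by $t_e^{-1}\tau$ and letting $\tau$ act through the word via $\mathscr{G}$, and observing that the result telescopes back to $w_0$ — exactly the computation of Lemma~\ref{lem1}. Lemma~B: let $\mathscr{F}_{n,e}'$ be $\mathscr{F}_{n,e}$ with every $t_e$ replaced by $\tau$; then $\mathscr{F}_{n,e}'$ is a consequence of $\mathscr{G}$ (in the two-edge case, also of the retained family $\mathscr{F}_{n,f}$, since then $w_1,\dots,w_{n-1}$ involve $t_f$-conjugates of $w_0$). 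This is the routine induction on the index $j$ of $w_j$, using the recursive definition $w_j=\theta^{j+1}(\cdot)w_0^{-1}\cdots w_{j-1}^{-1}$ and making $\tau$ act throughout. Lemma~C: conversely $\mathscr{F}_{n,e}$ follows from $\mathscr{F}_{n,e}'$, from $(*)$, and from $[t_e,\gamma]=1$ (which is already present in $\mathscr{P}_{\Sigma_1}$): $(*)$ gives the $w_0$ case, and downward induction on $k$ recovers $t_ew_{n-k}t_e^{-1}=w_{n-k-1}$ exactly as in Lemma~\ref{lem3}, using that $t_e$ commutes with both $\gamma$ and $\tau$. Combining the three lemmas, replacing the family $\mathscr{F}_{n,e}$ by the single relation $(*)$ is a Tietze transformation, so the resulting presentation still presents $\Gamma_n$.

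Finally I would remark on bookkeeping: in the two-edge variant one must be careful that the retained family $\mathscr{F}_{n,f}$ is genuinely left untouched — i.e. the elimination of $\mathscr{F}_{n,e}$ uses $\mathscr{F}_{n,f}$ only as a \emph{consequence}, never the reverse — and that the distinguished basepoint path back to $v_1$ behaves the same way whether $e$ or $f$ carries the `extra' conjugating factor $\gamma$; this is why the proposition stipulates that we keep the family for $f$, not $e$.

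The main obstacle I expect is not any individual lemma — each is a finite telescoping computation of the type already displayed — but stating the general setup cleanly enough that the single proof genuinely covers both the one-missing-edge and two-missing-edge cases without a proliferation of notation: one needs a uniform description of the free basis of $\pi_1(B_e)$, of how $\mathscr{G}$ encodes the $\tau$-action, and of which ambient relations ($[t_e,\gamma]=1$, and in the two-edge case $\mathscr{F}_{n,f}$) are available, so that Lemmas A–C can be invoked verbatim. Once that framework is in place, the verifications are mechanical.
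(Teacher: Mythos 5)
Your proposal is correct and follows essentially the same route as the paper: Proposition~\ref{prop:procedure} is stated there as a direct abstraction of the two worked examples, and your three-lemma scheme (the auxiliary relation $[t_e^{-1}\tau,w_0]=1$, the derivation of the $\tau$-substituted family $\mathscr{F}_{n,e}'$ from $\mathscr{G}$ together with the retained family in the two-edge case, and the recovery of $\mathscr{F}_{n,e}$ by downward induction using $[t_e,\gamma]$) is exactly the structure of Lemmas~\ref{lem1}--\ref{lem3} and their analogues in \sect\ref{sec:completetwoedge}. Your closing remarks on which ambient relations are available in each case, and on why the family for $f$ rather than $e$ must be the one retained, match the paper's intent.
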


By repeated application of this proposition, we can potentially remove a
family of relations from our presentation for each $2$-simplex of $\Delta$;
thus by the end, we have $\ge1-\chi(\Delta)$ families. In particular, if
$\chi(\Delta)<1$ then we obtain a presentation for $\Gamma_n$ with $O(n)$
relations, as we expect. The interesting case is when $\chi(\Delta)=1$: in
this case it is possible that we may be able to remove all of the size~$n$
families. One might hope that this happens if and only if $H_{\Delta}$ is
finitely presented, but we know from Theorem~\ref{th:noalg} that we cannot
hope to create an algorithm exhibiting this, even if is true.

\subsection{Presentations of the Bestvina--Brady kernels}
An entirely analogous procedure to the one described above lets one obtain
presentations for the Bestvina--Brady kernels themselves. In fact, we can
derive these presentations purely formally from the ones we have found for
$\Gamma_n$ by setting $n=\infty$ and discarding the generator $\lambda$.

It is interesting to compare the resulting presentations with those found by
W.~Dicks and I.~Leary:

\begin{theorem}[\cite{DicksLeary}]
  If $\Sigma$ is connected, then the group $H_{\Sigma}$ has a presentation
  with generating set the directed edges of $\Sigma$, and relators all words
  of the form $e_1^ne_2^n\cdots e_k^n$ for $(e_1,\ldots,e_k)$ a directed cycle
  in $\Sigma$.
\end{theorem}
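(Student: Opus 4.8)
The plan is to recover this presentation by a Reidemeister--Schreier computation inside $G_\Sigma$, which is precisely the ``$n=\infty$'' shadow of the iterated mapping torus of \sect\ref{sec:mainconstruct}--\sect\ref{sec:generalcase}. Fix a vertex $v_1$ and set $t=e_1$; since $\pi(t)=1$ we have $G_\Sigma=H_\Sigma\rtimes\langle t\rangle$, so $\{t^m:m\in\Z\}$ is a Schreier transversal for $H_\Sigma$. Applying Reidemeister--Schreier to the standard presentation $\langle e_1,\dots,e_N\mid[e_i,e_j]\ (\{v_i,v_j\}\in\Sigma)\rangle$, the Schreier generator $x_{i,m}:=t^me_it^{-m-1}$ is trivial when $i=1$, and rewriting $t^m[e_i,e_j]t^{-m}$ for each edge and each $m$ turns the defining relators into relators of the shape $x_{i,m}x_{j,m+1}=x_{j,m}x_{i,m+1}$. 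This is a presentation of $H_\Sigma$, but on infinitely many generators.

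The next step is to exchange the $x_{i,m}$ for the directed edges. For a directed edge $\vec e=(v_i\to v_j)$ put $y_{\vec e}=e_ie_j^{-1}\in H_\Sigma$, so that $y_{\overline{\vec e}}=y_{\vec e}^{-1}$. Fixing a maximal tree $T$ in the $1$-skeleton of $\Sigma$, one uses connectivity of $\Sigma$ --- and this is the only place it enters --- to reach every $v_i$ from $v_1$ along $T$, and thereby to write each $x_{i,m}$ as a word in the $y_{\vec e}$: since $H_\Sigma$ is normal in $G_\Sigma$ and is generated by the directed edges once $\Sigma$ is connected, the $t$-conjugation implicit in $x_{i,m}=t^m e_i t^{-m-1}$ can be pushed back into the $y_{\vec e}$'s, the explicit rewriting being governed by the shift maps and the words $\theta^i(v,v')$ of \sect\ref{sec:generalcase}. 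Carrying out the corresponding Tietze transformations eliminates all the $x_{i,m}$.

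After this elimination, each surviving relation $x_{i,m}x_{j,m+1}=x_{j,m}x_{i,m+1}$, which records a commutation $[e_i,e_j]=1$, becomes --- once the edge $\{v_i,v_j\}$ is closed up through $T$ to a loop based at $v_1$ --- the word read around the corresponding directed cycle of $\Sigma$. Conversely any directed cycle telescopes to $1$ in $G_\Sigma$, so every such word is trivial in $H_\Sigma$, and any directed cycle is a product, in the free group on directed edges, of the cycles obtained by closing a single edge through $T$; hence it suffices to keep the directed-cycle relators. Assembling these observations gives the stated presentation, and the companion presentation for $\Gamma_n$ is read off in the same way, using the transversal $\{t^m:0\le m<n\}$ and reinstating $\lambda=e_1^n$.

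The main obstacle is the Step 2--3 bookkeeping: proving that the infinite family $\{x_{i,m}\}$ can be eliminated with no loss of relations, and that the relations that survive are \emph{exactly} the directed-cycle words. I would treat this as a careful combinatorial rewriting problem --- tracking how each commutator relation interacts with the tree-telescoping as $m$ varies --- and cross-check it against the geometry of \sect\ref{sec:EilMac}: a directed edge $\vec e=(v_i\to v_j)$ is the loop ``up $e_i$, then down $e_j$'' in a level set of the canonical Morse function on $\widetilde{K}_\Sigma$, on which $H_\Sigma$ acts freely, and the directed-cycle relators are the loops in that level complex cut out by the cubes of $K_\Sigma$.
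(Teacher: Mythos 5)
First, a point of order: the paper does not prove this statement. It is quoted verbatim from Dicks--Leary \cite{DicksLeary} purely for comparison with the presentations built in \sect\ref{sec:simplify}, so there is no in-paper argument to measure yours against; your proposal has to stand on its own.

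Your setup is correct as far as it goes: the transversal $\{t^m\}$, the Schreier generators $x_{i,m}=t^me_it^{-m-1}$ with $x_{1,m}$ trivial, the rewritten relators $x_{i,m}x_{j,m+1}x_{i,m+1}^{-1}x_{j,m}^{-1}$, and the telescoping identity $(ab^{-1})^n=a^nb^{-n}$ for adjacent $a,b$, which is exactly why every directed-cycle word dies in $H_\Sigma$. The overall logic is also sound: since all directed-cycle words are trivial in $H_\Sigma$, it suffices to show that, after the change of generators, every Reidemeister--Schreier relator (and every auxiliary relator created by the Tietze moves) lies in the normal closure of the directed-cycle words. But that last step is the entire content of the theorem, and you have deferred it rather than carried it out. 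Concretely, what is missing is: (i) the explicit word in the $y_{\vec e}$ equal to $x_{i,m}=e_1^me_ie_1^{-m-1}$ --- this requires conjugating tree-telescoped products by powers of $e_1$, i.e.\ the $w_{i,j}$ and $\theta^i(v,v')$ computation of \sect\ref{sec:generalcase}, and it is precisely here that the exponent $n$ of $e_1^n\cdots e_k^n$ emerges from the level index $m$; (ii) the verification that under this substitution $x_{i,m}x_{j,m+1}x_{i,m+1}^{-1}x_{j,m}^{-1}$ becomes a product of conjugates of directed-cycle words; and (iii) the disposal of the relators introduced by the Tietze moves themselves, namely $y_{\bar e}=y_{\vec e}^{-1}$ and the expression of a non-tree $y_{\vec e}$ as a tree-path product, which must be recognized as the $n=1$ cycle relators. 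None of (i)--(iii) is done, and you explicitly label them ``the main obstacle.'' As it stands this is a viable plan (and it is the algebraic shadow of the paper's own mapping-torus construction, where the same bookkeeping appears as the families $\mathscr{F}_{n,k}$), not a proof. A minor further point: your closing claim that every directed cycle is a product of basic cycles ``in the free group on directed edges'' is not needed for the statement (all directed cycles are already included as relators), and as phrased it is only correct for the $n=1$ words; for $n\ge2$ the word $e_1^n\cdots e_k^n$ attached to a composite cycle reduces to basic-cycle words only after one first uses the $n=1$ relators to identify $\bar e$ with $e^{-1}$ and then conjugates.
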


Our presentations are instead on a generating set indexed by the vertices of
$\Sigma$, though in fact these generators are better thought of as directed
edges: the generator associated to a vertex $v$ really corresponds to the edge
in our chosen maximal tree from $v$ to its immediate predecessor.  As
$n\to\infty$, our families become infinite families, whose elements contain
the characteristic subwords $t_{i_1}^n\cdots t_{i_k}^n$ for all integers $n$,
where $(t_{i_1},\ldots,t_{i_k})$ is a path in the $1$-skeleton of $\Sigma$.

Both presentations of the kernel bring their own insights: in the
Dicks--Leary presentation, one can see very clearly how non-trivial loops in
$\pi_1(\Sigma)$ lead to infinite families of relations in the kernel, whereas
our presentation has the benefit of being on a minimal generating set.

\section{Extension to $2$-complexes: applications}\label{sec:apps}
\subsection{Coning off an edge loop}\label{sec:coneoff}
For our first application, we show how killing a loop in $\pi_1(\Sigma)$ can
allow us to remove a family of relations.

\begin{proposition}\label{prop:coneoff}
  Let $\hat{\Sigma}$ be a complex obtained from $\Sigma$ by coning off an
  edge-loop $\ell$ in $\Sigma_1$, and let
  $\hat{\Gamma}_n=H_{\hat{\Sigma}}\rtimes n\Z$.  Let $e\subset\Sigma_1$ be an
  edge in $\ell$ that does not lie in the maximal tree $T$. From the
  presentations of $\Gamma_n$ described in \sect\ref{sec:howhigherdim}, one
  can obtain presentations of $\hat{\Gamma}_n$ such that for each~$n$, the
  family of relations corresponding to $e$ is removed and a fixed number of
  relations independent of $n$ is added.
\end{proposition}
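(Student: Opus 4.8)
The plan is to treat $\hat\Sigma$ as obtained from $\Sigma$ by attaching a single $2$-cell along the edge-loop $\ell$ and then flag-triangulating, and then to run the mapping-torus simplification of \sect\ref{sec:howhigherdim} for just this one new simplex. More precisely, write $\ell=(v_{p_1},v_{p_2},\ldots,v_{p_s}=v_{p_1})$ as an edge-path in $\Sigma_1$, and let $b$ be the barycentre vertex at the centre of the coning $2$-cell, placed last in our vertex order. Coning off $\ell$ in $\Sigma_1$ and barycentrically subdividing produces $\hat\Sigma$, whose graph part $\hat\Sigma_1$ contains $\Sigma_1$ and whose only new $2$-simplex $\sigma$ (in the unsubdivided complex $\hat\Delta$) has boundary the loop $\ell$. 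First I would observe that the Bestvina--Brady/HNN construction of \sect\ref{sec:mainconstruct} and \sect\ref{sec:howhigherdim} applies verbatim: we get a presentation for $\hat\Gamma_n=\pi_1(\hat Y)$ by starting from our presentation of $\Gamma_n\subset G_\Sigma$, adjoining the size-$n$ families of relations coming from any \emph{new} edges of $\hat\Sigma_1-\hat T$ (but we can choose the maximal tree $\hat T$ to contain $T$ together with all new edges incident to $b$ except one, so no new size-$n$ families are introduced beyond those already present — the new edges of the subdivision of $\ell$ itself are already in $\Sigma_1$), and finally adjoining one stable letter $\tau=\tau_\sigma$ together with the six relations $\mathscr G$ describing how $\tau$ acts on a $\pi_1$-generating set for the base of the mapping torus over $\sigma$.

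Next I would run the local argument of \sect\ref{sec:completesimplex}--\sect\ref{sec:completetwoedge} with $\sigma$ playing the role of the completed $2$-simplex. The key point, exactly as in Lemma~\ref{lem1}, is that in the edge-path groupoid of $\hat Y$ the relation $[\tau t_i^{-1},w_0]=1$ holds, where $w_0$ is the first generator of the size-$n$ family $\mathscr F_n^{(e)}$ attached to the distinguished non-tree edge $e\subset\ell$, and $t_i=a_ia_\alpha^{-1}$ is the stable letter along $e$; this is because $\tau$, being the stable letter of the mapping torus over $\sigma$, acts on \emph{all} of the relevant coordinate circles (hence on every $w_j$ of the family), while $t_i$ only acts on the torus $T(a_\cdot,a_\cdot)$ spanned by the two endpoints of $e$. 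Then, as in the two lemmas following Lemma~\ref{lem1}: (i) the family $\mathscr F_n'^{(e)}$ obtained from $\mathscr F_n^{(e)}$ by replacing $t_i$ with $\tau$ follows from $\mathscr G$ together with the size-$n$ families attached to the \emph{other} non-tree edges of $\ell$ (a routine induction, exactly like the sample calculations shown); and (ii) conversely $\mathscr F_n^{(e)}$ follows from $\mathscr F_n'^{(e)}$, the single relation $(*)$, and $[t_i,\gamma]$ (another induction identical to Lemma~\ref{lem3}). Combining, $\mathscr F_n^{(e)}$ may be deleted in favour of the single relation $[\tau t_i^{-1},w_0]=1$, so we add the fixed set $\mathscr G\cup\{(*)\}$ ($7$ relations, independent of $n$) and the new generator $\tau$, and remove the size-$n$ family corresponding to $e$. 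Counting: the generator count changes by $+1$ (the vertex $b$), the relation count changes by $-n+7$ (plus whatever bounded change the subdivision forces), so deficiency changes by a bounded amount, as claimed.

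The step I expect to be the main obstacle is (ii) above — verifying that the full size-$n$ family $\mathscr F_n^{(e)}$ is recovered from $\mathscr F_n'^{(e)}$ and the single relation $(*)$ — because here one must be careful that the inductive argument of Lemma~\ref{lem3} still goes through when $\ell$ is an arbitrary edge-loop rather than the boundary of a single simplex: the words $w_j$ now involve products of stable letters $\theta^{j+1}(v_k,v_{\beta})$ along tree-paths rather than the simple strings $t_5^{j+1}t_4^{j+1}t_3^{j+1}t_2^{j+1}$ of \sect\ref{sec:specialcase}, and one needs to know that $t_i$ commutes with the appropriate $\gamma$ (the loop $a_\alpha^n$) and that $\tau$ commutes with it too — this is where the cell structure of $\hat Y$ (the commuting squares in the mapping-torus cube complex) must be invoked. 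Once one sets up the notation of \sect\ref{sec:generalcase} carefully, however, the verification is purely formal and parallels the special cases already worked out, so I would present it as ``an entirely analogous induction to that of Lemma~\ref{lem3}'' and refer the reader to \sect\ref{sec:completesimplex} for the template rather than writing it out in full.
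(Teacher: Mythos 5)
Your proposal rests on a misreading of what ``coning off'' an edge-loop produces. If $\ell$ has $k$ edges, the cone on $\ell$ is not a single $2$-simplex with boundary $\ell$ (no such simplex exists simplicially for $k>3$); it is a fan of $k$ distinct $2$-simplices of $\hat\Delta$, each spanned by one edge of $\ell$ and two ``vertical'' edges to the cone point. Consequently there is no single stable letter $\tau$ acting on the union of all the tori over $\ell$: each of the $k$ simplices contributes its own $\tau_\sigma$ and its own six relations, and the local argument of Proposition~\ref{prop:procedure} can only be applied one simplex at a time. Your tree claim is also false: a maximal tree $\hat T\supset T$ can contain only \emph{one} full vertical edge (both halves of $\alpha$, say), since adding a second would close a cycle through $T$; an Euler characteristic count shows $\rk\pi_1(\hat\Sigma_1)=\rk\pi_1(\Sigma_1)+k-1$, so the coning introduces $k-1$ \emph{new} size-$n$ families attached to the non-tree vertical edges. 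Your argument never accounts for these, so even granting your single-$\tau$ relation it would not yield a presentation with only boundedly many added relations.

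The missing idea is the iterative sweep around the cone: starting with the $2$-simplex containing $\alpha$ but not $e$, one applies Proposition~\ref{prop:procedure} to eliminate the family of its non-tree vertical edge; proceeding around the fan, each successive $2$-simplex (whose boundary meets one already-handled vertical edge and one fresh one) eliminates the next vertical family; after $k-1$ steps all vertical families are gone, and the last $2$-simplex --- the one over $e$ itself --- now has boundary contained in (the effective) $\hat T\cup e$ and can be used to eliminate the family of $e$. This bookkeeping, not a single global stable letter, is what makes the net change in the number of relations bounded independently of $n$.
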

\begin{proof}
  Extend the maximal tree in $\Sigma_1$ to a maximal tree $\hat{T}$ in
  $\hat{\Sigma}_1$ in such a way that it contains both `halves' of the
  subdivided edge $\alpha$ from the cone point to $\partial e$
  (Figure~\ref{fig:coneoff}).
  \begin{figure}[tb]
    \centering
    \includegraphics{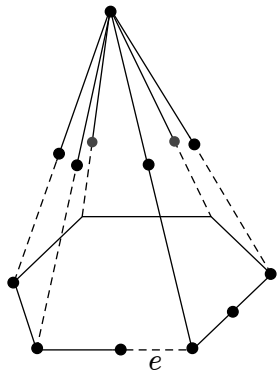}
    \caption{Coning off a loop in $\Sigma_1$.}
    \label{fig:coneoff}
  \end{figure}
  We now start filling in $2$-simplices around the cone, starting with the
  one containing $\alpha$ but not $e$ in its boundary (for example, in
  Figure~\ref{fig:coneoff}, this is the front-right simplex).  In each case,
  we are able, using Proposition~\ref{prop:procedure}, to remove the family
  attached to the `missing' edge between the cone point and $\Sigma_1$. When
  we come around to the last $2$-simplex, we have already eliminated this
  `vertical' family, so we can instead use the $2$-simplex to eliminate the
  family corresponding to the edge $e$, again by
  Proposition~\ref{prop:procedure}.
\end{proof}
\begin{remark}
  One can view our argument that adding a subdivided $2$-simplex lets us
  remove one of the families on the boundary of that $2$-simplex as a special
  case of Proposition~\ref{prop:coneoff}: the subdivided $2$-simplex is
  exactly the cone on its boundary.
\end{remark}

\begin{remark}
  This argument shows that the number of size~$n$ families of relations that
  we are left with in our presentation is bounded below by the \emph{killing
  number} of $\pi_1(\Sigma)$: recall that a group $\Gamma$ has killing number
  $\le k$ if $\Gamma$ is the normal closure of $k$ elements.
  In~\cite{Wiegold}, J.~Lennox and J.~Wiegold prove that any finite perfect
  group has killing number~$1$, and conjecture that the same is true for any
  finitely generated perfect group.
\end{remark}

\subsection{When $\Sigma$ is a triangulated surface}\label{sec:surfaces}
This simple situation illustrates clearly how a homotopically non-trivial loop
can prevent one from removing all the families of relations.

\begin{proposition}\label{prop:surfaces}
  Let $\Sigma$ be a standard flag triangulation of a compact surface and let
  $\Gamma_n$ be the corresponding Artin subgroups. Then repeated use of
  Proposition~\ref{prop:procedure} produces a presentation of $\Gamma_n$ with
  $O(1)$ relations if and only if $\Sigma$ is homeomorphic to the disc or the
  sphere, i.e.~if and only if $\Sigma$ is simply connected.
\end{proposition}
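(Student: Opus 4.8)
The plan is to count families of relations carefully at the end of the simplification procedure and match that count against the Euler characteristic of the surface, which by Corollary~\ref{cor:eulchar} governs whether $H_\Sigma$ is finitely presented. Write $\Delta$ for the surface so that $\Sigma$ is its barycentric subdivision. First I would set up the bookkeeping: we begin with a presentation of $\Gamma_n \subset G_{\Sigma_1}$ having one size-$n$ family for each edge of $\Sigma_1$ outside a chosen maximal tree $T$, so the number of initial families is $1 - \chi(\Sigma_1)$; since $\Sigma_1$ is a graph this equals the first Betti number of the $1$-skeleton. Then each $2$-simplex $\sigma$ of $\Delta$ contributes a stable letter $\tau_\sigma$ and, via Proposition~\ref{prop:procedure}, can be used to delete (at most) one size-$n$ family. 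So after running the procedure to completion we are left with at least $(1 - \chi(\Sigma_1)) - f$ families, where $f$ is the number of $2$-faces of $\Delta$; since $\Delta$ is a closed or bordered surface this lower bound is exactly $1 - \chi(\Delta) = 1 - \chi(\Sigma)$ (after accounting for the edges of $\Sigma_1$ that become tree edges once we extend $T$ over the barycentres, each $2$-simplex contributing one new tree edge). Hence $R(n) = O(1)$ forces $\chi(\Sigma) \ge 1$.

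The converse and the sharper statement require showing that when $\Sigma$ is the disc or sphere we really can delete \emph{every} family, i.e.\ the greedy application of Proposition~\ref{prop:procedure} does not get stuck. Here I would argue topologically. Order the $2$-simplices of $\Delta$ as $\sigma_1, \dots, \sigma_f$ so that each $\sigma_j$ meets $\sigma_1 \cup \dots \cup \sigma_{j-1}$ in a nonempty connected union of edges --- possible because the disc and sphere are built by gluing triangles along a connected, growing frontier (a \emph{shelling}, which exists for the standard triangulations in the statement). When we fill in $\sigma_j$, at least one edge of $\partial\sigma_j$ is still ``missing'' relative to the current extended tree $T_j$ and the already-filled region, unless $\sigma_j$ is the last simplex of a sphere, in which case all three boundary edges are already tree-or-filled but one family survives on the very last free edge --- and in that situation the sphere has $\chi = 2$, so $1 - \chi(\Sigma) = -1 < 0$ and the count $1 - \chi(\Sigma_1) - f$ already says zero families remain. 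For the disc, $\chi = 1$, and the shelling hits each free boundary edge of $\Sigma_1 - T$ exactly once with a $2$-simplex of the form $T \cup e$ or $T \cup e \cup f$, so Proposition~\ref{prop:procedure} applies at every stage and all families disappear, giving $R(n) = O(1)$.

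The main obstacle I anticipate is the middle inequality: confirming that the crude lower bound ``families left $\ge$ initial families $-$ number of $2$-simplices'' is actually \emph{achieved} as an equality in the non-simply-connected cases, so that $R(n) = O(1)$ genuinely fails there. Proposition~\ref{prop:procedure} only guarantees we can remove \emph{one} family per simplex; a priori a single simplex might let us remove nothing (if its boundary lies entirely in $T$ plus already-removed edges) or the removals might not be independent. So I would need to check that for a triangulated surface there is always an ordering of the $2$-simplices realizing one genuine removal each until the surviving families correspond to a spanning set of cycles for $H_1(\Sigma; \Z)$; equivalently, that the ``dual spanning tree'' of the triangulation lets each non-tree edge of $\Sigma_1$ be paired off with a distinct $2$-simplex, leaving precisely $\dim H_1(\Sigma;\Q) = 1 - \chi(\Sigma)$ unpaired. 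Once that combinatorial matching is in place, invoking Corollary~\ref{cor:eulchar} (equivalently Theorem~\ref{th:BB}\eqref{BBfp} for the surface case, where $1$-connected $\Leftrightarrow$ sphere or disc) finishes the equivalence.
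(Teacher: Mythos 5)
There is a genuine gap, and it is exactly at the case the proposition is really about: the projective plane. Your entire counting scheme is governed by the Euler characteristic --- you argue that the number of surviving size-$n$ families is bounded below by (and, with your proposed matching, equal to) $1-\chi(\Sigma)$, or equivalently $\dim H_1(\Sigma;\Q)$. But for $\Sigma=\R P^2$ one has $\chi(\Sigma)=1$ and $H_1(\Sigma;\Q)=0$, so your ``main obstacle'' paragraph, if carried out, would conclude that \emph{zero} families survive and hence that $R(n)=O(1)$ for the projective plane --- the opposite of what the proposition asserts, since $\R P^2$ is not simply connected. The Euler-characteristic count correctly disposes of surfaces with $\chi\le0$ (this is the content of the remark after Proposition~\ref{prop:procedure}), and your shelling argument for the disc and sphere is a reasonable substitute for the paper's appeal to \sect\ref{sec:completesimplex} and Proposition~\ref{prop:coneoff}; but no argument that sees only $\chi$ or rational homology can separate $\R P^2$ from the disc. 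The correct lower bound on the number of surviving families is not $1-\chi(\Sigma)$ but the \emph{killing number} of $\pi_1(\Sigma)$ (the remark following Proposition~\ref{prop:coneoff}), which is $1$ for $\Z/2$; the obstruction is torsion in $\pi_1$, invisible to your bookkeeping.

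What the paper actually does for this case is run the procedure explicitly on the standard flag triangulation of $\R P^2$ (Figure~\ref{fig:rp2}): starting from $10$ families and $10$ subdivided $2$-simplices, five applications of Proposition~\ref{prop:procedure} succeed outright, after which the greedy process stalls; one must agree to retain the family $\mathscr{F}(8,14)$ --- corresponding to the edge completing a loop generating $\pi_1(\R P^2)$ --- before the remaining four simplices can eliminate the other four families. Thus exactly one size-$n$ family survives and $R(n)\to\infty$. To repair your proof you would need to replace the $\chi$-based matching in the non-simply-connected, $\chi=1$ case with an argument of this kind (or with the killing-number bound), showing that every run of the procedure leaves at least one family whenever $\pi_1(\Sigma)\ne1$.
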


\begin{remark}
  It is interesting to compare this to the subdivided $2$-simplex we analysed
  in~\sect\ref{sec:completesimplex}: in each case, the Euler characteristic of
  the complex is~$1$, but whereas in the former example our procedure allowed
  us to remove the single size~$n$ family of relations in our presentation, in
  this case the topology prevents us from doing this. This underscores the
  fact that the presentation invariants we are examining are more subtle than
  just Euler characteristic, which in turn encourages the belief that the
  asymptotic behaviour of $\defic(\Gamma_n)$ should not depend only on the
  homology of $\Sigma$.
\end{remark}

\begin{proof}[Proof of Proposition~\ref{prop:surfaces}]
For surfaces of Euler characteristic~$\le0$, our procedure gives us no hope of
removing all of the infinite families. On the other hand, we have already seen
in \sect\ref{sec:completesimplex} that the groups $\Gamma_n$ associated to the
triangulation of the disc as a single $2$-simplex can be presented with $O(1)$
relations, while in any standard triangulation of the sphere the equator is
coned off (cf.~Proposition~\ref{prop:coneoff}). Thus it only remains to
consider the projective plane.

The `standard' flag triangulation $\Sigma$ of $\R P^2$ is obtained from the
complex $\Sigma_1$ shown in Figure~\ref{fig:rp2} by filling in (subdivisions
of) all the visible triangles.
\begin{figure}[tb]
  \centering
  \includegraphics{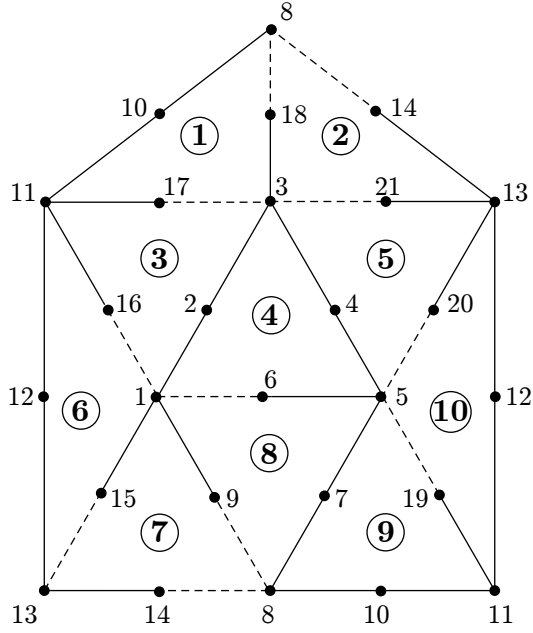}
  \caption{$\Sigma_1$ when $\Delta$ is a triangulation of $\R P^2$.}
  \label{fig:rp2}
\end{figure}
We order the vertices as shown; we choose the maximal tree whose complement is
the dotted lines. One checks that $\Sigma_1$ has $21$ vertices and $30$ edges,
so $\rk(\pi_1(\Sigma_1))=10$, and thus in our presentation for $\Gamma_n$ we
have $10$ size~$n$ families of relations. Let us write $\mathscr{F}(a,b)$ for
the family corresponding to the edge joining the vertices labelled $a$ and
$b$.

To form $\Sigma$, we must now adjoin $10$ $2$-simplices; we shall refer to
them by boldface numbers, as in Figure~\ref{fig:rp2}. Now we start to apply
the simplification procedure described in Proposition~\ref{prop:procedure}:
\begin{equation*}
  \begin{aligned}
    \mathbf{4}&\Rightarrow \mathscr{F}(1,6)\\
    \mathbf{9}&\Rightarrow\mathscr{F}(5,19)\\
    \mathbf{8}+\mathscr{F}(1,6)&\Rightarrow\mathscr{F}(8,9)\\
    \mathbf{10}+\mathscr{F}(5,19)&\Rightarrow\mathscr{F}(5,20)\\
    \mathbf{5}+\mathscr{F}(5,20)&\Rightarrow\mathscr{F}(3,21)
  \end{aligned}
\end{equation*}
At this point we can make no further progress. However, if we agree to keep
$\mathscr{F}(8,14)$ in our presentation (note that this corresponds to the
edge completing a loop generating $\pi_1(\Sigma)$, namely the loop around
the boundary in Figure~\ref{fig:rp2}) then we can use this to dispose of
the remaining families:
\begin{equation*}
  \begin{aligned}
    \mathbf{2}+\mathscr{F}(3,21)+\mathscr{F}(8,14)&\Rightarrow\mathscr{F}(8,18)\\
    \mathbf{1}+\mathscr{F}(8,18)&\Rightarrow\mathscr{F}(3,17)\\
    \mathbf{3}+\mathscr{F}(3,17)&\Rightarrow\mathscr{F}(1,16)\\
    \mathbf{6}+\mathscr{F}(1,16)&\Rightarrow\mathscr{F}(13,15)
  \end{aligned}
\end{equation*}
\end{proof}

\section{Proof of Theorem~\ref{lth:posres}}\label{sec:diagproof}
\noindent In this section we move away from our emphasis on constructive
methods, and use other techniques to establish Theorem~\ref{lth:posres}.  The
non-trivial left-hand vertical implications in the diagram on
page~\pageref{eq:bigdiag} follow from the Bestvina--Brady Theorem (see
Theorem~\ref{th:BB}); in this section, we prove the remaining non-trivial
implications.

\subsection{When the kernel has good finiteness properties}\label{sec:goodker}
The following lemma is well-known (see, for example,~\cite[VIII.5,
Exercise~3b]{Brown}).

\begin{lemma}\label{lem:fp2relmod}
  A finitely generated group $\Gamma$ is of type~\FPnn{2} if and only if every
  relation module for a presentation of $\Gamma$ on a finite generating set is
  finitely generated as a $\ZG$-module.
\end{lemma}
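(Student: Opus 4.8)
The plan is to deduce this from the standard homological criterion for $\mathrm{FP}_2$, namely that a finitely generated group $\Gamma$ is of type $\mathrm{FP}_2$ if and only if, for \emph{some} (equivalently every) presentation $F/R$ of $\Gamma$ on a finite generating set, the relation module $\ab R = R/[R,R]$ is finitely generated as a $\Z\Gamma$-module; this is exactly the content of the cited exercise in Brown's book, so strictly speaking there is nothing to prove beyond unwinding the quantifiers. First I would recall the setup: given a presentation $1\to R\to F\to\Gamma\to1$ with $F$ free of finite rank, one has the partial free resolution $\Z\Gamma^{(X)}\to\Z\Gamma\to\Z\to0$ coming from the generators $X$ of $F$, and the kernel of $\Z\Gamma^{(X)}\to\Z\Gamma$ is precisely $\ab R$ as a $\Z\Gamma$-module (this identification is the Fox-calculus description of the second syzygy, and it is where the relation module enters). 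Finite generation of this kernel as a $\Z\Gamma$-module is then, by definition and a routine use of the fact that $\Z\Gamma$ is (in general not Noetherian but) such that finitely-presented-ness can be tested stepwise, equivalent to the existence of a finite set of generators for the relations, i.e.\ to $\Gamma$ being of type $\mathrm{FP}_2$.

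Next I would address the ``every'' versus ``some'' point, since the statement of the lemma asserts the criterion for \emph{every} finite presentation. Here one invokes the standard invariance argument: if $F/R$ and $F'/R'$ are two presentations of $\Gamma$ on finite generating sets, then one can pass between them by Tietze transformations, and the relation modules $\ab R$ and $\ab{R'}$ become isomorphic after adding finitely many free $\Z\Gamma$-summands to each (stable equivalence of relation modules). Finite generation as a $\Z\Gamma$-module is unaffected by adding or removing finitely generated free summands, so if it holds for one finite presentation it holds for all of them. Combined with the $\mathrm{FP}_2$ criterion above, this gives both directions of the ``if and only if''.

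The only mild obstacle is bookkeeping: one must be careful that $F$ is finitely generated (so that $\Gamma$ is finitely generated, which is part of the hypothesis) but that no finiteness is assumed on $R$ a priori, and that the $\Z\Gamma$-module structure on $\ab R$ is the one induced by conjugation, matching the convention fixed in \sect\ref{sec:backgrounddef}. Since all of this is genuinely standard, I would keep the proof to a sentence or two, citing~\cite[VIII.5, Exercise~3b]{Brown} for the homological criterion and the stable-equivalence remark for the passage from ``some'' to ``every'', rather than reproducing the resolution-theoretic argument in full.
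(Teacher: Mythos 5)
Your proof plan is correct, and it is essentially the argument the paper relies on: the paper offers no proof of this lemma beyond the citation to~\cite[VIII.5, Exercise~3b]{Brown}, and your sketch---identifying $\ab{R}$ with the kernel of $(\ZG)^r\to\ZG$ via the four-term exact sequence (the paper's Lemma~\ref{lem:relmodexact}), observing that finite generation of that kernel is the defining condition for \FPnn{2} given that $\Gamma$ is finitely generated (hence \FPnn{1}), and then passing from \emph{some} to \emph{every} finite presentation by Schanuel's lemma, i.e.\ stable equivalence of relation modules---is exactly the standard argument that citation points to. Nothing is missing.
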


\begin{proposition}\label{prop:goodkernel}
  \begin{enumerate}
  \item If $H$ is finitely presented, then $\defic(\Gamma_n)$ is bounded
    uniformly in $n$.
  \item If $H$ is of type~\FPnn{2}, then $\adef(\Gamma_n)$ is bounded
    uniformly in $n$.
  \end{enumerate}
  \begin{proof}
    We have $\Gamma_n = H\rtimes n\Z$. Thus if one has a presentation
    $H=F/R$, where $F$ is a finitely generated free group with basis elements
    $a_i$ say, then $\Gamma_n$ can be presented by augmenting this
    presentation by a generator $t_n$ for $n\Z$ and one relation of the
    form $t_na_it_n^{-1}=u_{i,n}$ for each $a_i$, where $u_{i,n}\in F$. Let
    $S$ be this additional set of relations and let $R'$ be the normal closure
    of $R\cup S$ in $F'=F*\angles{t_n}$.  Since $\Gamma_n = F'/R'$, the first
    assertion follows.

    If $H$ is of type ${\rm FP}_2$, then $R/[R,R]$ is finitely generated as a
    $\Z F$ module, and adding the image of $S$ to any finite generating set
    for $R/[R,R]$ provides a generating set for $R'/[R',R']$ as a $\Z F'$
    module, which proves the second part.
  \end{proof}
\end{proposition}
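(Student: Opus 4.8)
The plan is to promote a presentation of the Bestvina--Brady kernel $H$ to a presentation of $\Gamma_n$ by exploiting the splitting $\Gamma_n=H\rtimes n\Z$. Fix a lift $t\in G$ of a generator of $\Z$, so that $G=H\rtimes\angles{t}$ and $n\Z=\angles{t^n}$, and write $\phi\in\mathrm{Aut}(H)$ for conjugation by $t$. If $H=F/R$ with $F$ free on $a_1,\dots,a_d$, then $\Gamma_n$ is presented by $F'=F*\angles{t_n}$ modulo the normal closure $R'$ of $R$ together with the $d$ words $s_{i,n}=t_na_it_n^{-1}u_{i,n}^{-1}$, where $u_{i,n}\in F$ is a word representing $\phi^n(a_i)$. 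The key point is that the number of extra generators ($1$) and extra relators ($d$) is governed by $H$ alone: the parameter $n$ enters only \emph{inside} the words $u_{i,n}$, never through their number.

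For part~(a) I would take $F/R$ to be a \emph{finite} presentation of $H$, with $F$ of rank $d$ and $R$ the normal closure of $r$ relators. The presentation of $\Gamma_n$ just described then has $d+1$ generators and $r+d$ relators, so $\defic(\Gamma_n)\le(r+d)-(d+1)=r-1$ for every $n$, a bound independent of $n$.

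For part~(b) I would run the same construction, assuming only that $H$ is of type $\FPnn{2}$. By Lemma~\ref{lem:fp2relmod} the presentation $F/R$ may be chosen with $F$ finitely generated and $R/[R,R]$ generated over $\Z H$ by $m$ elements, say the images of $\rho_1,\dots,\rho_m\in R$; here $R$ need not be finitely normally generated, but $\Gamma_n$ is still finitely presented since it has finite index in $G$. As $F\hookrightarrow F'$ carries $R$ into $R'$ and $[R,R]$ into $[R',R']$ and is compatible with the inclusion $H\hookrightarrow\Gamma_n$, the induced map $R/[R,R]\to R'/[R',R']$ is equivariant; lifting a $\Z H$-module expression for an arbitrary element of $R$ over $\rho_1,\dots,\rho_m$ and reducing modulo $[R',R']$ then shows that the image of $R/[R,R]$ generates, over $\Z\Gamma_n$, the submodule spanned by $\rho_1,\dots,\rho_m$. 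Since $R'$ is the normal closure of $R$ and of the $d$ words $s_{i,n}$, the relation module $R'/[R',R']$ is generated over $\Z\Gamma_n$ by those $m$ elements together with the $d$ images of the $s_{i,n}$, whence $\adef(\Gamma_n)\le(m+d)-(d+1)=m-1$, again independent of $n$.

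Apart from routine manipulation of normal closures, the only step that really needs care---and the one I expect a reader to want spelled out---is the last assertion of~(b): that the finitely many $\Z H$-module relations witnessing finite generation of $R/[R,R]$ are not destroyed on passing to $R'/[R',R']$ over $\Z\Gamma_n$. This follows from $[R,R]\subseteq[R',R']$ and the compatibility of the two module structures along $H\hookrightarrow\Gamma_n$, together with the elementary fact that a relation module is generated, as a module over the quotient group, by the images of any set of normal generators of the relation subgroup; so there is no genuine obstacle here.
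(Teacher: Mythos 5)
Your proposal is correct and follows essentially the same route as the paper: both exploit the splitting $\Gamma_n=H\rtimes n\Z$ to augment a presentation $F/R$ of $H$ by one stable letter and one conjugation relation per generator, and both observe that the relation module of the augmented presentation is generated over $\Z\Gamma_n$ by the (finitely many) generators of $\ab{R}$ together with the new relators. Your extra care in checking that $[R,R]\subseteq[R',R']$ and that the module structures are compatible along $H\hookrightarrow\Gamma_n$ is exactly the point the paper leaves implicit.
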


\subsection{A Mayer--Vietoris argument}\label{sec:MV}
Again we begin by recalling an easy lemma; one can find a proof
in~\cite[Proposition~5.4]{Brown} or~\cite{BTtorsion}.
\begin{lemma}\label{lem:relmodexact}
  If $F/R$ is a presentation for $\Gamma$ with $F$ free of rank $r$, then
  there is an exact sequence of $\ZG$-modules
 \[
 0\to\ab{R}\to(\ZG)^r\to\ZG\to\Z\to0.
 \]
\end{lemma}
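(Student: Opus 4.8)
The plan is to realise the asserted four-term sequence as (a rearrangement of) the cellular chain complex of a suitable covering space of a graph; this makes both the exactness and the $\ZG$-linearity transparent, and reduces everything to standard covering-space bookkeeping. First I would fix a free basis $x_1,\dots,x_r$ of $F$ and let $X=\bigvee_{i=1}^{r}S^1$ be the wedge of $r$ circles with its obvious CW structure (one $0$-cell, $r$ oriented $1$-cells); this is a $\KXone{F}$. Let $p\colon X_R\to X$ be the connected covering corresponding to the subgroup $R\le F$. Since $R$ is normal in $F$ with quotient $\Gamma$, the deck group of $p$ is $\Gamma$, acting freely and cellularly on $X_R$; and since $X$ is aspherical, $X_R$ is a $\KXone{R}$, hence a connected graph. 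Fixing a basepoint of $X_R$ over that of $X$ identifies $\pi_1(X_R)$ with $R$ in such a way that the deck action of $\Gamma$ on $\pi_1(X_R)$ corresponds to the conjugation action of $\Gamma=F/R$ on $R$.

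Next I would write down the cellular chain complex of $X_R$ as a complex of $\ZG$-modules. The $0$-cells of $X_R$ form a single free $\Gamma$-orbit and the $1$-cells form $r$ free $\Gamma$-orbits (one lying over each circle of $X$), so as $\ZG$-modules $C_0(X_R)\cong\ZG$ and $C_1(X_R)\cong(\ZG)^r$, and with the obvious orientations the boundary map $\partial_1\colon(\ZG)^r\to\ZG$ carries the $i$-th generator to $\bar x_i-1$, where $\bar x_i$ is the image of $x_i$ in $\Gamma$. Because $X_R$ is a connected graph we have $H_0(X_R)\cong\Z$ and, since $X_R$ is a $\KXone{R}$ with $R$ free, $H_1(X_R)\cong H_1(R;\Z)\cong R/[R,R]=\ab R$; moreover these identifications are $\Gamma$-equivariant for the actions described above, so the $\Gamma$-module $H_1(X_R)$ is exactly the relation module $\ab R$. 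Splicing now gives what we want: $\ker\partial_1=H_1(X_R)=\ab R$, while the cokernel of $\partial_1$ is $H_0(X_R)=\Z$, with quotient map $\ZG\to\Z$ the augmentation (the image of $\partial_1$ is the augmentation ideal $I_\Gamma=\langle\bar x_i-1\rangle$, since $\Gamma$ is generated by the $\bar x_i$). Inserting these identifications yields the exact sequence of $\ZG$-modules
\[
0\to\ab{R}\to(\ZG)^r\xrightarrow{\ \partial_1\ }\ZG\to\Z\to0 .
\]

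The only step that is not pure formalism is the identification of the deck $\Gamma$-action on $H_1(X_R)$ with the conjugation action that defines the relation module; this follows from the standard change-of-basepoint description of deck transformations, but it is worth spelling out, since it is precisely the point on which the $\ZG$-module structure of $\ab R$ depends. Everything else — freeness of the chain groups over $\ZG$, the formula for $\partial_1$, the computation of $H_0$ and $H_1$ of a graph — is routine. (One could instead avoid topology and construct the map $\ab R\to(\ZG)^r$ directly from the Fox free derivatives, $w\mapsto(\overline{\partial w/\partial x_i})_{i}$, using the fundamental identity $w-\varepsilon(w)=\sum_i(\partial w/\partial x_i)(x_i-1)$ to see that it lands in $\ker\partial_1$ and a classical argument of Lyndon for injectivity; but the covering-space argument above is shorter and makes the $\Gamma$-equivariance manifest.)
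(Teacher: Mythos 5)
Your argument is correct, and it is precisely the standard derivation that the paper points to rather than reproving: the lemma is stated with a citation to Brown's \emph{Cohomology of Groups}, where the sequence is obtained exactly as you do, from the cellular chain complex of the $\Gamma$-cover of a wedge of $r$ circles, with $H_1$ giving the relation module and $H_0$ giving $\Z$. Your identification of the boundary map with $e_i\mapsto\bar x_i-1$ and your flagging of the deck-action versus conjugation-action point are both accurate, so there is nothing to add.
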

\begin{proposition}\label{prop:mv}
  Let $\Sigma$ be a finite, connected, flag $2$-complex. Suppose that $\Sigma$
  is not $1$-acyclic. Then $\adef(\Gamma_n)\to\infty$ as $n\to\infty$.
  \begin{proof}
    By choosing a free $\ZG$-module mapping onto $\ab{R}$, and splicing this
    surjection to the exact sequence of Lemma~\ref{lem:relmodexact} in
    degrees~$\le1$, one can obtain a partial free resolution of $\Z$ over
    $\ZG$. In the light of Proposition~\ref{lth:vertpres}, one sees that in
    order to show that $\adef(\Gamma_n)\to\infty$ it therefore suffices to
    show that $d(H_2(\Gamma_n))\to\infty$ as $n\to\infty$.

    To prove this, we adopt the Morse theory point of view developed
    in~\cite{BestvinaBrady}. Let $K$ be the standard Eilenberg--Mac~Lane space
    for $G_{\Sigma}$, and recall that $K$ is a subcomplex of $\E^N/\Z^N$,
    where $N=\abs{\Sigma^{(0)}}$. The Bestvina--Brady Morse function
    $f:\lift{K}\to\R$ lifts to universal covers the function $K\to S^1$
    induced by the coordinate-sum map $\E^n\to\R$.

    Let $S$ be the infinite cyclic cover of $K$ corresponding to the
    subgroup $H=\ker\pi$ of $G$. There is a height function $S\to\R$ induced
    by $f$; we again denote this by $f$.

    For $n\ge3$, let $S_n$ be the finite vertical strip $f^{-1}[-n,n]$.
    Observe that
    \begin{align*}
      \varinjlim H_2(S_n)&=H_2(\varinjlim S_n)\\
      &=H_2(S)\\
      &=H_2(H_{\Sigma}).
    \end{align*}
    By~\cite[Corollary~7]{LearySaad}, $H_2(H_{\Sigma})$ maps surjectively onto
    an infinite direct sum of copies of the non-trivial abelian group
    $H_1(\Sigma)$, and it follows that $d(H_2(S_n))\to\infty$ as $n\to\infty$.

    Let $T_n$ be $S_n$ with its two ends glued together by the deck
    transformation $\phi$ with $f\circ\phi(s)=f(s+2n)$.  Let $A_n$ and $B_n$
    be respectively the images in $T_n$ of $f^{-1}[1-n,n-1]$ and $f^{-1}\left(
    [-n,2-n]\cup[n-2,n] \right)$. Let $C_n=A_n\cap B_n$, so that $C_n$ is
    homeomorphic to a disjoint union of two copies of $f^{-1}[0,2]$. We have
    just seen that $d(H_2(A_n))\to\infty$ as $n\to\infty$; moreover, $B_n$
    and $C_n$ have $H_1$ and $H_2$ generated by a number of elements
    independent of~$n$. But there is an exact Mayer--Vietoris sequence
    \[
    \cdots\to H_2(C_n)\to H_2(A_n)\oplus H_2(B_n)\to H_2(T_n)\to
    H_1(C_n)\to\cdots,
    \]
    so it follows that $d(H_2(T_n))\to\infty$, i.e.\
    $d(H_2(\Gamma_n))\to\infty$, as required.
  \end{proof}
\end{proposition}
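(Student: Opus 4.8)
The plan is to bound $\adef(\Gamma_n)$ below by a homological quantity that visibly grows with $n$. Since $\Gamma_n$ has finite index in $G_\Sigma$, which by Proposition~\ref{prop:KG_RAAG} acts freely and cocompactly on a contractible cube complex, $\Gamma_n$ has a finite classifying space; in particular its homology is finitely generated, so Lemma~\ref{lem:defdef} applies and gives $\adef(\Gamma_n)\ge d(H_2(\Gamma_n))-\rk(H_1(\Gamma_n))$. By Proposition~\ref{th:vertpres} the group $\Gamma_n$ is generated by $N=\abs{\Sigma^{(0)}}$ elements, so $\rk(H_1(\Gamma_n))\le d(H_1(\Gamma_n))\le N$ is bounded independently of $n$. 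Hence it is enough to prove that $d(H_2(\Gamma_n))\to\infty$ as $n\to\infty$.

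To do this I would use the semidirect decomposition $\Gamma_n=H\rtimes n\Z$. Let $S$ be the infinite cyclic cover of the standard $\KXone{G_\Sigma}$ corresponding to $H$, so that $S$ is a $\KXone{H}$ carrying a free $\Z=G_\Sigma/H$-action with quotient $\KXone{G_\Sigma}$; write $t$ for the resulting action on $H_*(H)=H_*(S)$. The monodromy of the extension $\Gamma_n=H\rtimes n\Z$ is the $n$-th power of the generator of this $\Z$, so on homology it acts as $t^n$, and the Wang sequence of the extension gives a short exact sequence
\[
0\longrightarrow H_2(H)/(t^n-1)H_2(H)\longrightarrow H_2(\Gamma_n)\longrightarrow\ker\!\big(t^n-1\mid H_1(H)\big)\longrightarrow 0 .
\]
The left-hand term is a subgroup of the finitely generated abelian group $H_2(\Gamma_n)$, so $d\big(H_2(H)/(t^n-1)H_2(H)\big)\le d(H_2(\Gamma_n))$, and it suffices to make this term require arbitrarily many generators.

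For this I would invoke the Leary--Saad computation~\cite{LearySaad}: $H_2(H_\Sigma)$, regarded as a module over $\Z[t^{\pm1}]=\Z[\Z]$ via the deck action, admits a $\Z[t^{\pm1}]$-equivariant surjection onto the free module $\Z[t^{\pm1}]\otimes_\Z H_1(\Sigma)$, and $H_1(\Sigma)\neq 0$ exactly because $\Sigma$ is not $1$-acyclic. Applying the right-exact functor $M\mapsto M/(t^n-1)M$ preserves surjectivity, and
\[
\big(\Z[t^{\pm1}]\otimes_\Z H_1(\Sigma)\big)/(t^n-1)\;\cong\;\Z[\Z/n]\otimes_\Z H_1(\Sigma),
\]
whose minimal number of generators is $n\cdot d(H_1(\Sigma))\ge n$ (by a prime-by-prime count, using $H_1(\Sigma)\neq 0$). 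Hence $d\big(H_2(H_\Sigma)/(t^n-1)\big)\ge n$, so $d(H_2(\Gamma_n))\ge n$, and therefore $\adef(\Gamma_n)\ge n-N\to\infty$, as required.

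The crux is the module-theoretic input: one needs not merely that $H_2(H_\Sigma)$ is large, but that it carries a $\Z[t^{\pm1}]$-equivariant surjection onto a module free on $H_1(\Sigma)$, so that multiplication by $t^n-1$ has a cokernel whose generation number grows linearly in $n$; verifying that the description in~\cite{LearySaad} has this form (or extracting the equivariance from their proof) is where the real work lies. Should one prefer to avoid pinning down the module structure, the same estimate can be obtained more geometrically in the spirit of~\cite{BestvinaBrady}: put the Bestvina--Brady Morse function $f$ on $S$, set $S_m=f^{-1}[-m,m]$ so that $\varinjlim H_2(S_m)=H_2(H_\Sigma)$ and hence $d(H_2(S_m))\to\infty$, realise $\Gamma_n$ as the fundamental group of the compact aspherical complex obtained by gluing the two ends of such a strip, and split that complex by a Mayer--Vietoris argument into a copy of a long strip together with pieces whose homology is bounded independently of $n$.
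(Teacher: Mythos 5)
Your proposal is correct, and your primary argument takes a genuinely different route from the paper's. Both proofs make the same initial reduction (in your case via Lemma~\ref{lem:defdef} and Proposition~\ref{th:vertpres}) to showing that $d(H_2(\Gamma_n))\to\infty$, and both ultimately rest on the Leary--Saad description of $H_2(H_{\Sigma})$. Where you diverge is in the passage from $H$ to $\Gamma_n$: you use the Wang sequence of $\Gamma_n=H\rtimes n\Z$, so that $H_2(H)/(t^n-1)H_2(H)$ embeds in $H_2(\Gamma_n)$, and you then need the $\Z[t^{\pm1}]$-module structure of $H_2(H_{\Sigma})$ --- specifically a $t$-equivariant surjection onto a module free over $\Z[t^{\pm1}]$ on $H_1(\Sigma)$ --- to force the coinvariants to require at least $n$ generators. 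The paper never touches the $t$-action on $H_2(H)$: it truncates the infinite cyclic cover to compact strips $S_n=f^{-1}[-n,n]$, deduces $d(H_2(S_n))\to\infty$ from the bare, non-equivariant statement of \cite{LearySaad} that $\varinjlim H_2(S_n)=H_2(H_{\Sigma})$ surjects onto an infinite direct sum of copies of $H_1(\Sigma)$, and then transfers the growth to the closed-up strip $T_n$, whose fundamental group is the relevant $\Gamma_m$, by a Mayer--Vietoris argument in which the gluing region has homology bounded independently of $n$. You correctly identify the equivariance of the Leary--Saad surjection as the one point needing verification in your route; it does hold (the summands are indexed by the vertices of $S$, which form a single free orbit of the deck group, and the surjection is natural), but it is strictly more than the paper cites, and the fallback you sketch is precisely the paper's Mayer--Vietoris argument. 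The trade-off is that your algebraic route yields the cleaner quantitative bound $d(H_2(\Gamma_n))\ge n$ once the module structure is pinned down, while the paper's geometric route gets by with the weaker homological input at the cost of handling the topology of the glued strip.
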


\subsection{Using Euler characteristic}\label{sec:euler}
\begin{proposition}\label{prop:eulchardef}
  Suppose that $\Sigma$ is $2$-dimensional. If $\chi(\Sigma)<1$ then
  $\defic(\Gamma_n)\to\infty$.
  \begin{proof}
    By Corollary~\ref{cor:eulchar}, we have that $\chi(G)>0$, and since
    $(G:\Gamma_n)=n$ it follows that $\chi(\Gamma_n)\to\infty$. On the other
    hand, there is a finite $3$-dimensional $\KXone{\Gamma_n}$-complex $K$,
    and given any presentation $2$-complex $L$ for $\Gamma_n$, say with $g$
    $1$-cells and $r$ $2$-cells, we can make a complex $L'$ homotopy
    equivalent to $K$ by attaching cells in dimensions $\ge3$.  It follows
    that
    \begin{align*}
      \chi(\Gamma_n)&=\chi(K)\\
      &=1-b_1(K)+b_2(K)-b_3(K)\\
      &\le1-b_1(L)+b_2(L)\\
      &=\chi(L)\\
      &=1-g+r,
    \end{align*}
    so $\defic(\Gamma_n)\to\infty$.   
  \end{proof}
\end{proposition}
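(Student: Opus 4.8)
The plan is to bound $\defic(\Gamma_n)$ below by a quantity that grows linearly in $n$, by playing a finite three-dimensional $\KXone{\Gamma_n}$ against an arbitrary finite presentation via Euler characteristics.

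First I would use Corollary~\ref{cor:eulchar} to rewrite the hypothesis: since $\chi(G_\Sigma)=1-\chi(\Sigma)$, the assumption $\chi(\Sigma)<1$ says exactly that $\chi(G)>0$. By Proposition~\ref{prop:KG_RAAG} the group $G$ has the compact cube complex $K_\Sigma$ as a $\KXone{G}$, and because $\Sigma$ is two-dimensional this complex is three-dimensional. Passing to the $n$-sheeted cover corresponding to $\Gamma_n$ gives a finite three-dimensional $\KXone{\Gamma_n}$, which I will call $K$. In particular $\chi(\Gamma_n)$ is well-defined, and multiplicativity of Euler characteristic for finite-index subgroups gives $\chi(\Gamma_n)=n\chi(G)\to\infty$ as $n\to\infty$.

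Next I would take any finite presentation of $\Gamma_n$ with $g$ generators and $r$ relators and let $L$ be its presentation $2$-complex, so that $\chi(L)=1-g+r$. Writing $b_i$ for rational Betti numbers and using Hopf's theorem (that $H_2$ of a group is a quotient of $H_2$ of any $2$-complex realising it as $\pi_1$), one gets $b_2(L)\ge b_2(\Gamma_n)$, while $b_1(L)=b_1(\Gamma_n)$ automatically. Combining this with $b_3(K)\ge0$ yields
\[
\chi(\Gamma_n)=1-b_1(K)+b_2(K)-b_3(K)\le 1-b_1(L)+b_2(L)=\chi(L)=1-g+r,
\]
so $r-g\ge \chi(\Gamma_n)-1=n\chi(G)-1$. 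Since the presentation was arbitrary, $\defic(\Gamma_n)\ge n\chi(G)-1\to\infty$.

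The one step that needs a little care is the middle inequality $b_2(L)\ge b_2(\Gamma_n)$, i.e.\ the comparison between the homology of a possibly very economical presentation $2$-complex and the homology of the group itself; once that comparison is granted, the rest is bookkeeping with $\chi$, and the three-dimensionality and finiteness of $K$ (so that the tail of its Betti numbers is $b_3$ and nothing beyond) do the work. Everything else — multiplicativity of $\chi$, the computation $\chi(G)=1-\chi(\Sigma)$, and the Euler–Poincaré identity $\chi(L)=1-b_1(L)+b_2(L)$ — is standard.
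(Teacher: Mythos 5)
Your proof is correct and follows essentially the same route as the paper: both compare $\chi(\Gamma_n)=n\chi(G)\to\infty$, computed from the finite three-dimensional $\KXone{\Gamma_n}$, with $\chi(L)=1-g+r$ for an arbitrary presentation $2$-complex $L$. The only (immaterial) difference is that you justify the key inequality $b_2(K)\le b_2(L)$ via Hopf's theorem, whereas the paper obtains it by attaching cells of dimension $\ge3$ to $L$ to build a complex homotopy equivalent to $K$.
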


\nocite{*}
\bibliography{books}
\bibliographystyle{amsplain}
\end{document}